\definecolor{myblue}{rgb}{0.21, 0.34, 0.74}
\definecolor{mygrey}{rgb}{0.55, 0.57, 0.67}
\definecolor{myred}{rgb}{0.79, 0.0, 0.09}
\definecolor{mygreen}{rgb}{0.05, 0.5, 0.06}
\definecolor{randblue}{RGB}{54, 88, 191}
\definecolor{randred}{RGB}{191, 38, 51}
\definecolor{matlabyellow}{rgb}{0.9290, 0.6940, 0.1250}
\definecolor{matlaborange}{rgb}{0.8500, 0.3250, 0.0980}
\definecolor{brightyellow}{RGB}{255, 255, 0}
\definecolor{vibrantgreen}{RGB}{0, 220, 0}
\definecolor{skyblue}{RGB}{0, 190, 255}
\definecolor{yaleblue}{rgb}{0.06, 0.3, 0.57}
\theoremstyle{plain}
\newtheorem{theorem}{Theorem}[section]
\newtheorem{proposition}[theorem]{Proposition}
\newtheorem{lemma}[theorem]{Lemma}
\newtheorem{claim}{Claim}
\newtheorem{remark}[theorem]{Remark}
\newtheorem{example}[theorem]{Example}
\newcommand{\supp}{\mathrm{supp}}
\newcommand{\R}{\mathbb{R}}
\newcommand{\proj}{\bm{\mathsf{P}}^\perp}
\newcommand{\diff}{\, \mathrm{d}}
\newcommand{\conv}{\mathrm{conv}}
\newcommand{\dive}{\mathrm{div}}
\renewcommand{\S}{\mathbb{S}^{d-1}}
\newcommand{\Tan}{\mathsf{T}}
\newcommand{\cP}{\mathscr{P}}
\newcommand{\bW}{\bm{\mathit{W}}}
\newcommand{\bV}{\bm{\mathit{V}}}
\newcommand{\bB}{\bm{\mathit{B}}}
\newcommand{\bU}{\bm{\mathit{U}}}
\newcommand{\Q}{\mathbb{Q}}
\renewcommand{\leq}{\leqslant}
\renewcommand{\geq}{\geqslant}
\renewcommand{\le}{\leqslant}
\renewcommand{\ge}{\geqslant}
\renewcommand{\sec}{\textsection}
\renewcommand{\beta}{\upbeta}
\renewcommand{\eta}{\upeta}
\numberwithin{equation}{section}
\definecolor{color1}{RGB}{255,230,230} 
\definecolor{color2}{RGB}{230,255,230} 
\definecolor{color3}{RGB}{230,230,255} 
\definecolor{nodecolor}{RGB}{180,220,240} 
\definecolor{mycolor1}{RGB}{255,200,200} 
\definecolor{mycolor2}{RGB}{200,255,200} 
\definecolor{mycolor3}{RGB}{200,200,255} 
\title{ Measure-to-measure interpolation using Transformers }
\author{Borjan Geshkovski}
\affil{Inria \& Sorbonne Université}
\author{Philippe Rigollet}
\affil{MIT}
\author{Domènec Ruiz-Balet}
\affil{Universitat de Barcelona}
\date{ \today }
\begin{document}

%
%

\maketitle

%
%


\begin{abstract}

Transformers are deep neural network architectures that underpin the recent successes of large language models. Unlike more classical architectures that can be viewed as point-to-point maps, a Transformer acts as a measure-to-measure map implemented as specific interacting particle system on the unit sphere: the input is the empirical measure of tokens in a prompt and its evolution is governed by the continuity equation. In fact, Transformers are not limited to empirical measures and can in principle process any input measure.  As the nature of data processed by Transformers is expanding rapidly, it is important to investigate their expressive power  as maps from an arbitrary measure to another arbitrary measure. To that end, we provide an explicit choice of parameters that allows a single Transformer to match $N$ arbitrary input measures to $N$ arbitrary target measures, under the minimal assumption that every pair of input-target measures can be matched by some transport map.
			
\bigskip

\noindent \textbf{Keywords.}\quad Transformers, optimal transport, mean-field, continuity equation, clustering, controllability, universal approximation.

\medskip

\noindent \textbf{\textsc{ams} classification.}\quad \textsc{41A25, 68T07, 37C10}.

\end{abstract}
	
\thispagestyle{empty}

\setcounter{tocdepth}{2}

\tableofcontents

\setlist[itemize,enumerate]{left=0pt}

\section{Introduction}

\emph{Transformers}, introduced in 2017 with the groundbreaking paper \cite{vaswani2017attention}, are the neural network architectures behind the recent successes of large language models. They owe their impressive results to the way they process data: inputs are length-$n$ sequences of $d$-dimensional vectors called \emph{tokens} (representing words, or patches of an image, for example), which are processed over several layers of parametrized nonlinearities. Unlike conventional neural networks however, all tokens are coupled and mixed at every layer via the so-called \emph{self-attention mechanism}. 

To make this discussion transparent we take a leaf out of several recent works \cite{sander2022sinkformers, vuckovic2020mathematical, geshkovski2023mathematical} 
which view Transformers as a flow maps on $\cP(\S)$---the space of probability measures over the unit sphere $\S$---realized by an interacting particle system: viewing each token as a particle, given an initial sequence of particles $(x_1(0),\ldots,x_n(0))\in(\S)^n$, one considers 
\begin{equation} \label{eq: ips}
    \dot{x}_i(t) = \mathsf{v}[\mu(t)](t,x_i(t)) \hspace{1cm} \text{ for } t\in[0,T],
\end{equation}
for all $i\in\llbracket1,n\rrbracket$; here $\mu(t)=\frac{1}{n}\sum_{j=1}^n \updelta_{x_j(t)}$ denotes the empirical measure. 
The vector field
\begin{equation} \label{eq: vf}
\mathsf{v}[\mu](t,x) = \proj_{x}(\bV(t)\mathscr{A}_{\bB}[\mu](t,x)+\bW(t)(\bU(t)x+b(t))_+)
\end{equation}
depends on the empirical measure through self-attention
\begin{equation*} 
\mathscr{A}_{\bB}[\mu](t,x)
\coloneqq \left.
\int e^{\langle \bB(t)x, x'\rangle}\,x'\mu(\diff x')
\;\middle/\;
\int e^{\langle \bB(t)x, \zeta\rangle}\mu(\diff \zeta)
\right.
\end{equation*}
The parameters $\bV(t),\bB(t),\bW(t),\bU(t)$, which are all $d\times d$ matrices, and $b(t)$, which is a $d$-dimensional vector, are to be used to steer the flow to one's liking. The vector field $\mathsf{v}[\mu(t)](t,\cdot)$ is a combination of the self-attention mechanism $\mathscr{A}_{\bB}[\mu(t)](t,\cdot)$ and a \emph{perceptron}
 at every layer $t$, ultimately projected onto $\Tan_x\S$ by virtue of the orthogonal projector $\proj_x\coloneqq I_d-xx^\top$, referred to as \emph{layer normalization}.
Practical implementations of Transformers are  discrete-time versions, of course, and \eqref{eq: ips} originates from a Lie-Trotter splitting scheme---see \cite{lu2019understanding, geshkovski2023mathematical} for details.

Since \eqref{eq: ips} only truly depends on the empirical measure, one can naturally turn to the \emph{continuity equation} which governs its evolution. We can thus equivalently see the Transformer as the solution map of the Cauchy problem
\begin{equation} \label{eq: cauchy.pb}
\begin{dcases}
\partial_t \mu(t) + \dive\Big(\mu(t)\, \mathsf{v}[\mu(t)]\Big)=0 &\text{ on } [0,T]\times\S\\
\mu(0) = \mu_0 &\text{ on } \S.
\end{dcases}
\end{equation}
Here $-\mathrm{div}$ denotes the adjoint of the spherical gradient $\nabla$.
As the number $n$ of particles can be large---orders of magnitude vary in different implementations, likely due to compute---in this paper we focus on \eqref{eq: cauchy.pb}, which makes sense for arbitrary measures, and encompasses \eqref{eq: ips} in the particular setting of empirical measures.

{\color{black} Transformers (more specifically, encoders) are used to solve learning tasks such as \emph{masked-language modeling}. One starts from an input sequence of $n$ tokens, masks a subset of positions, and trains the model to predict the original tokens at the masked positions. In practice, the model produces predictions at \emph{all} $n$ positions (i.e. a probability distribution over the vocabulary for each position), but the training loss is computed only on the masked positions. Consequently, for the purpose of our abstraction, we idealize the problem and identify the relevant “output” with the probability distributions associated with the masked tokens (equivalently, with a target measure supported on the masked positions).} Motivated by further ubiquitous tasks including sentiment analysis and image classification, and taking an approximation/control theory perspective, in this paper we consider the canonical learning problem in which we are given data consisting of $N\gg1$ pairs of input and output probability distributions
\begin{equation} \label{eq: data} \tag{$\mathscr{D}$}
(\mu_0^i, \mu_1^i)\in\mathscr{P}(\S)\times\mathscr{P}(\S) \hspace{1cm }\text{ for } i\in\llbracket1,N\rrbracket,
\end{equation}
and we seek to match them through the solution map of \eqref{eq: cauchy.pb}. In the context of the applications evoked above, one always works with discrete measures,  
but we consider a more general setting in what follows.
This is an \emph{ensemble transportation} or \emph{controllability} problem, since we seek to accomplish this matching of measures by means of the flow of \eqref{eq: cauchy.pb} for a \emph{single} parameter or control $\theta=(\bV(t),\bB(t),\bW(t),\bU(t), b(t))_{t\in[0,T]}$. 

In the discrete-time setting, and focusing solely on mapping sequences to sequences, the problem is first solved in \cite{yun2019transformers} by using $\bB=\beta\widetilde{\bB}$ and $\beta=+\infty$ (a formal limit), as well as additional bias vectors within the inner products of the self-attention mechanism, but without employing layer normalization. Further work has focused on seeing whether one can do matching solely using self-attention, namely, without the perceptron component or layer normalization---results in this direction include \cite{alberti2023sumformer, kratsios2021universal}. See \cite{chiang2023tighter, jiang2023approximation, edelman2022inductive, jiang2023brief, wang2024understanding, petrov2024prompting, sander2024towards} for further results. 

In the continuous time and/or arbitrary measure setting, much less is known---we are aware of \cite{adu2024approximate, agrachev2024generic, furuya2024transformers}. In \cite{agrachev2024generic}, still in the context of empirical measures, the authors focus on self-attention dynamics only (\(\bW \equiv 0\)) and prove that, \emph{generically}, two vector fields in the class of permutation-equivariant vector fields suffice to match two ensembles of empirical measures with the same number of atoms.
Their study is inspired by a flurry of works on matching one cloud of points to another using the flow of \eqref{eq: vf} with \(\bV \equiv 0\) (known as \emph{neural ODEs}), where tools from geometric control theory can be useful \cite{agrachev2020control, agrachev2022control, scagliotti2021deep, elamvazhuthi2022neural, tabuada2022universal}.
With the exception of \cite{tabuada2022universal, elamvazhuthi2022neural}, none of these papers actually state the specific vector fields that can be used, and none of them are constructive.
On another hand, \cite{adu2024approximate} address the setting of absolutely continuous measures, but use a slightly different vector field compared to \eqref{eq: vf}. 
Finally, \cite{furuya2024transformers} address the discrete-time system and arbitrary measures, but use a slightly different model motivated by \emph{in-context learning} \cite{garg2022can} and approximate a map \( \mathscr{P}(\Omega) \times \Omega \to \Omega \) over compact subsets $\Omega\subset\mathbb{R}^d$---the proof is based on a clever application of Stone-Weierstrass.

None of the above papers use layer normalization; moreover, the parameters used are not explicit due to the non-constructive strategy, and there are therefore no bounds on the number of switches. 
To address these pitfalls, we take inspiration from concurrent works on neural ODEs \cite{li2022deep, ruiz2023neural, cheng2023interpolation, geshkovski2026constructiveconditionalnormalizingflows} in which the parameters are fully explicit and piecewise constant by construction. Our goal is to focus on the most general case while constructing parameters that leverage  salient properties of all mechanisms involved in \eqref{eq: vf}---the prime example being the dynamic \emph{emergence of clusters} proven in \cite{geshkovski2024emergence, geshkovski2023mathematical} (see \cite{criscitiello2024synchronization, geshkovski2024dynamic, bruno2024emergence, geshkovski2024number, alcalde2025attention, polyanskiy2025synchronization, burger2025analysis, alcalde2025clustering, castin2025unified, abellaconsensus, alvarez2026perceptrons} for subsequent refinements), which has been empirically observed and referred to as \emph{token uniformity}, \emph{oversmoothing}~\cite{chen2022principle, ru2023token, guo2023contranorm, wu2024demystifying, wu2024role, dovonon2024setting, scholkemper2024residual}, or \emph{rank collapse}~\cite{dong2021attention, feng2022rank, noci2022signal, joudaki2023impact, zhao2023are, zhai2023stabilizing, noci2024shaped, bao2024self, cowsik2024geometric} in the literature. 
In fact, we solely use the long-time behavior of \eqref{eq: cauchy.pb} with explicit, well-chosen parameters throughout, and as such, our strategy also leads to a deeper understanding of the inner workings of all mechanisms in \eqref{eq: vf}.

\subsection{Main results} 

Set $\Uptheta\coloneqq(\mathscr{M}_{d\times d}(\R))^4\times \R^d.$ For any $T>0$ and $\theta=(\bV,\bB,\bW,\bU, b)\in L^\infty((0,T);\Uptheta)$, the Cauchy problem \eqref{eq: cauchy.pb} is well-posed, in the sense that for every $\mu_0\in\mathscr{P}(\S)$ there exists a unique weak solution $\mu\in \mathscr{C}^0([0,T];\mathscr{P}(\S))$. This in turn yields a continuous and invertible flow (or solution) map 
$$\Phi^t_\theta:\mathscr{P}(\S)\to\mathscr{P}(\S),$$
for $t\in[0,T]$, with 
$$\Phi^t_\theta(\mu_0) = \mu(t),$$ 
which we often use later on to simplify the presentation.
These results follow from  classical arguments using the Lipschitz properties of the underlying vector field---see \cite[\sec 6]{geshkovski2024emergence}, \cite{paul2022microscopic} for details.

Henceforth, for simplicity, assume\footnote{The assumption $\mu_1^i\not\equiv\mu_1^j$ for $i\neq j$ (and \eqref{eq: first.assp}, and more generally \eqref{eq: assumption.hole}) can be removed at the cost of additional technicalities---see {\bf\Cref{appendix: technical}}. 
} that $\mu_0^i\not\equiv\mu_0^j$ and $\mu_1^i\not\equiv\mu_1^j$ for $i\neq j$.

{\color{black}
To elucidate the working mechanisms of Transformers and obtain a simpler proof, we first focus on the idealized case where the target is supported on a single atom. A general result appears right after.
}

\begin{theorem} \label{thm: targets.atoms}
    Suppose $d\geq3$.
    Consider data 
    \eqref{eq: data} such that 
    \begin{enumerate}
    \item There exists $w_0\in\S$ such that
    \begin{equation} \label{eq: first.assp}
        w_0\notin\bigcup_{i} \supp(\mu_0^i). 
    \end{equation} 
    \item For any $i\in\llbracket1,N\rrbracket$, we have $\mu^i_1=\updelta_{x^i}$.
    \end{enumerate}
    Then for any $T>0$ and $\varepsilon>0$, there exists $\theta\in L^\infty((0,T);\Uptheta)$ such that for any $i\in\llbracket1,N\rrbracket$, the unique solution $\mu^i\in\mathscr{C}^0([0,T];\mathscr{P}(\S))$ to \eqref{eq: cauchy.pb} with data $\mu_0^i$ and parameters $\theta$ satisfies
    \begin{equation*}
        \mathsf{W}_2(\mu^i(T),\mu_1^i)\leq\varepsilon.
    \end{equation*}
    Moreover, $\theta$ can be chosen piecewise constant, with  $O(d\cdot N)$ switches, and $$\|\theta\|_{L^\infty((0,T);\Uptheta)}=O\left(\frac{d\cdot N}{T}+\log\frac{1}{\varepsilon}\right).$$
\end{theorem}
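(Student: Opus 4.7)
The construction splits $[0,T]$ into two stages: a \emph{clustering} stage that collapses each $\mu_0^i$ to a Dirac measure, followed by a \emph{point-to-point matching} stage that routes each resulting atom to its prescribed target $x^i$.

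\emph{Stage 1 (Clustering on $[0, T/2]$).} The goal is to collapse each $\mu_0^i$ to an approximate Dirac $\updelta_{y^i}$ at a point $y^i \in \S$, with the $y^i$ pairwise distinct. I turn off the perceptron by setting $\bW \equiv 0$ and activate self-attention with $\bB \equiv \beta\, u u^\top$ and $\bV \equiv \Id$, for a direction $u \in \S$ and large $\beta > 0$. For $\beta \to \infty$ the softmax in $\mathscr{A}_{\bB}$ becomes a hardmax, and the quantitative emergence-of-clusters results of \cite{geshkovski2024emergence} (and follow-ups) yield exponentially fast concentration (in $\beta T$) of each $\mu_0^i$ on $\updelta_{y^i}$, where $y^i$ is the unique maximizer of $\langle u, \cdot \rangle$ over $\supp(\mu_0^i)$. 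The hole $w_0 \notin \bigcup_i \supp(\mu_0^i)$ gives an open set of admissible directions $u$ in a small cap around $w_0$, from which I pick $u$ generically so that the $y^i$ are pairwise distinct. Only a constant number of switches is used here, and the requirement $\mathsf{W}_2(\mu^i(T/2), \updelta_{y^i}) \ll \varepsilon$ forces $\beta \gtrsim \log(1/\varepsilon)/T$, yielding the $\log(1/\varepsilon)$ term in $\|\theta\|_{L^\infty((0,T);\Uptheta)}$.

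\emph{Stage 2 (Point-to-point matching on $[T/2, T]$).} Each measure is now (approximately) $\updelta_{y^i}$, and since $\mathscr{A}_{\bB}[\updelta_y](y) = y$ the self-attention term vanishes on Dirac measures. The dynamics collapse to the perceptron-driven neural ODE $\dot{x} = \proj_x \bW(\bU x + b)_+$ on $\S$, and the task reduces to routing $N$ points to $N$ targets. I follow the constructive interpolation strategy of \cite{ruiz2023neural, cheng2023interpolation}, adapted to $\S$ via layer normalization: the atoms are processed one at a time, and for each atom I use $O(d)$ piecewise constant switches to design a ReLU vector field whose active region is a thin neighborhood of that atom (needing $O(d)$ affine half-spaces to carve out in $\R^d$). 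This moves the selected atom to its target $x^i$ without disturbing the others, giving $O(dN)$ switches in total and parameter magnitudes $O(dN/T)$, matching the stated bound.

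\emph{Main obstacle.} The delicate step is guaranteeing distinctness of the cluster points $y^i$ produced in Stage 1, since any coincidence $y^i=y^j$ would make Stage 2 impossible. The set of "bad" directions $u$ for which two $y^i$'s coincide is nowhere dense in the admissible cap around $w_0$ (each such coincidence is a codimension-one condition on $u$), so generic $u$ works; the hypothesis $d \geq 3$ provides enough room on $\S$ to select such a direction and, later, to route atoms past each other in Stage 2. When two measures $\mu_0^i, \mu_0^j$ share the same support but differ by their weights, the hardmax limit alone cannot distinguish them, and one must retain a finite $\beta$ so that the softmax attractor is a weighted average depending genuinely on the measure, or compose two clustering steps along different directions $u_1, u_2$; in either case only constantly many additional switches are required, and the overall count remains $O(dN)$.
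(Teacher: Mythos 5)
Your proposal omits the disentanglement step entirely and attempts to cluster all $N$ measures directly, relying on a characterization of the cluster limit that is not available. This is the central gap.

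Concretely, Stage 1 asserts that with $\bB\equiv\beta uu^\top$, $\bV\equiv I_d$, $\bW\equiv0$, each $\mu_0^i$ collapses onto $\updelta_{y^i}$ with $y^i=\argmax_{x\in\supp\mu_0^i}\langle u,x\rangle$. No result in \cite{geshkovski2024emergence,geshkovski2023mathematical} characterizes the location of the limit cluster as the argmax; those works prove convergence to \emph{some} Dirac, and the paper is explicit that "characterizing the location of the limit point mass for general measures is an open problem." \Cref{prop: targets.atoms} only gives a limit $z\in\conv_g\,\supp\,\mu_0$, with no further localization. Moreover, even as a heuristic the hardmax picture is inconsistent here: for $\bB=\beta uu^\top$ one has $\langle\bB x,x'\rangle=\beta\langle u,x\rangle\langle u,x'\rangle$, so particles with $\langle u,x\rangle<0$ are driven toward $\argmin_{x'}\langle u,x'\rangle$, not the argmax, and particles with $\langle u,x\rangle\approx0$ are driven toward $\mathbb{E}_\mu[x']$. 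A single puncture $w_0\notin\bigcup_i\supp\mu_0^i$ does not place the supports in one hemisphere relative to $u$; the paper handles this by first transporting to $\mathbb{Q}_1^{d-1}$ via \Cref{lem: first.quadrant}, but that still would not fix the argmax issue.

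The genericity argument for distinctness of the $y^i$ also fails in an essential case: if $\mu_0^i$ and $\mu_0^j$ have the same support (differing only in weights), then for \emph{every} $u$ the argmaxes coincide. You acknowledge this and suggest retaining finite $\beta$ or composing two directions, but neither yields a proof without characterizing the finite-$\beta$ limit, which is precisely the missing ingredient. This is exactly the scenario that the paper's disentanglement machinery (\Cref{prop: separation}, via \Cref{lem: induction.barycenter} and \Cref{lem: perturbation}) is built to resolve: it works with $\bB\equiv0$, so attention reduces to the barycenter $\mathbb{E}_{\mu(t)}[z]$, whose dynamics under piecewise-constant rank-one $\bV=\alpha_k\alpha_k^\top$ can be tracked explicitly; a measure whose barycenter lies in $\ker\bV$ is left fixed while the others are driven toward $\pm\alpha_k$. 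Same-support or colinear-barycenter cases are treated by a short perceptron-only perturbation (\Cref{lem: perturbation}). After disentanglement the convex hulls of supports are pairwise disjoint, and then \Cref{prop: targets.atoms} clusters each to a point \emph{within its own convex hull}, so distinctness is automatic without any characterization of the limit. Your Stage 2 and the parameter-scaling discussion are broadly in line with \Cref{prop: interpolation.neural.ode}, but the proof cannot stand without repairing Stage 1, and the repair is essentially the disentanglement step you removed.
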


The fact that the parameters $\theta$ can be chosen to be piecewise-constant-in-time leads to a direct link with the discrete-time network used in practice: the number of switches provides a lower bound on the number of layers. Our estimates are in all likelihood sub-optimal (principally due to our inability to simultaneously use both components of the vector field in \eqref{eq: vf}, as seen in {\bf \Cref{sec: sketch}}) and we believe that there is great margin for improvement. The reader is referred to {\bf \Cref{sec: nb.disc}} and {\bf \Cref{sec: complexity.untanglement}} for further comments on this particular aspect.

\Cref{thm: targets.atoms} follows as a corollary of the proof of the following general result.

\begin{theorem} \label{thm: main.result}
Suppose $d\geq3$. Consider data \eqref{eq: data} such that
\begin{enumerate}
    \item There exist $w_0,w_1\in\S$ such that 
    \begin{equation} \label{eq: assumption.hole}
        w_0\notin\bigcup_{i} \supp(\mu_0^i) \quad \text{ and } \quad w_1\notin\bigcup_{i}\supp(\mu_1^i). 
    \end{equation}  
    \item For any $i\in\llbracket1,N\rrbracket$, there exists $\mathsf{T}^i\in L^2(\S;\S)$ such that $\mathsf{T}^i_{\#}\mu_0^i=\mu_1^i$.
\end{enumerate}
Then for any $T>0$ and $\varepsilon>0$, there exists $\theta\in L^\infty((0,T);\Uptheta)$ such that for any $i\in\llbracket1,N\rrbracket$, the unique solution $\mu^i\in\mathscr{C}^0([0,T];\mathscr{P}(\S))$ to \eqref{eq: cauchy.pb} with data $\mu_0^i$ and parameters $\theta$ satisfies
    \begin{equation*}
        \mathsf{W}_2(\mu^i(T),\mu_1^i)\leq\varepsilon.
    \end{equation*}
    Moreover, $\theta$ can be chosen piecewise constant.
\end{theorem}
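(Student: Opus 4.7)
The overall plan is to reduce \Cref{thm: main.result} to an iterated application of \Cref{thm: targets.atoms}. First I would approximate each $\mathsf{T}^i \in L^2(\S; \S)$ by a step function $\widetilde{\mathsf{T}}^i$ taking finitely many values $\{y_{k,i}\}_{k=1}^M \subset \S$ on a common finite partition $\{C_k\}_{k=1}^M$ of $\S$ (made independent of $i$ by taking a common refinement). The pushforward $\nu_1^i := \widetilde{\mathsf{T}}^i_{\#}\mu_0^i = \sum_{k=1}^M \mu_0^i(C_k)\, \updelta_{y_{k,i}}$ is then a finite atomic measure, and for sufficiently fine partitions one has $\mathsf{W}_2(\nu_1^i, \mu_1^i) \leq \varepsilon/2$ uniformly in $i$. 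It therefore suffices to construct $\theta$ realizing $\mathsf{W}_2(\Phi^T_\theta \mu_0^i, \nu_1^i) \leq \varepsilon/2$ for every $i$.

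\textbf{Step 2 (Sequential cell-by-cell clustering).} I would split $[0,T]$ into $M$ sub-intervals, one per cell, and during epoch $k$ employ a two-phase construction to move the mass of $\mu^i(\cdot)$ currently lying in (the evolved image of) $C_k$ onto the atom $y_{k,i}$, simultaneously for all $i$. The first sub-phase is a \emph{localized transport}: set $\bV \equiv 0$ and $\bB \equiv 0$ so that self-attention is inactive, and use only the perceptron $\bW(\bU x + b)_+$ to push the $C_k$-mass into a small spherical cap near $y_{k,i}$, leaving mass outside $C_k$ essentially fixed. The localization is achieved via ReLU bump constructions supported in $C_k$, which is feasible because the hole $w_0$ (and its forward image under the flow) provides a reference direction allowing one to cut off the affine map $\bU x + b$ outside $C_k$. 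The second sub-phase then invokes \Cref{thm: targets.atoms} locally, collapsing the transported mass into $\updelta_{y_{k,i}}$; the second hole $w_1$ is used to place the $\{y_{k,i}\}$ so that they remain well-separated throughout.

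\textbf{Step 3 (Non-interference---the main obstacle).} The hard part is guaranteeing non-interference across epochs: the self-attention term $\mathscr{A}_{\bB}[\mu]$ is inherently non-local, so during the clustering sub-phase of epoch $k$, the attention felt at a point outside $C_k$ a priori couples to the mass being moved inside $C_k$, potentially perturbing atoms already deposited at $\{y_{k',i}\}_{k' < k}$ or mass in unprocessed cells. My plan is to take $\bB = \beta\widetilde{\bB}$ with $\beta$ large during the clustering sub-phase, so that the exponential weights in $\mathscr{A}_{\bB}[\mu](t,x)$ localize sharply on the particles closest to $x$; then the attention at any point outside $C_k$ is governed essentially only by nearby outside-$C_k$ mass, and the layer normalization $\proj_x$---which kills the tangential component of $\mathsf{v}$ at fixed points of the driving field---guarantees that previously deposited atoms and unprocessed cells remain essentially frozen. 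Assembling the $M$ epochs and applying the triangle inequality yields $\mathsf{W}_2(\mu^i(T), \mu_1^i) \leq \mathsf{W}_2(\mu^i(T), \nu_1^i) + \mathsf{W}_2(\nu_1^i, \mu_1^i) \leq \varepsilon$, and piecewise constancy of $\theta$ is preserved across sub-phases by construction.
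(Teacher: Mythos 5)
Your proposal contains a fatal gap in Step~2, and the issue is precisely the one that the paper's architecture is designed to overcome. The first sub-phase of every epoch sets $\bV\equiv 0$ and $\bB\equiv 0$, so the resulting dynamics are generated by the $\mu$-\emph{independent} vector field $\proj_x\bW(\bU x+b)_+$. Such a flow is a single Lipschitz bijection $\upphi:\S\to\S$ that acts the same way on every one of the $N$ measures: a point $x\in C_k$ is sent to $\upphi(x)$, full stop. You cannot ``push the $C_k$-mass into a small spherical cap near $y_{k,i}$, simultaneously for all $i$'' when $y_{k,i}$ genuinely depends on $i$ --- yet this is exactly what the theorem requires. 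Indeed, take $N=2$ with $\supp\mu_0^1\cap\supp\mu_0^2\neq\varnothing$ and $\supp\mu_1^1\cap\supp\mu_1^2=\varnothing$; as the paper points out in \Cref{sec: sketch}, no single-valued map can simultaneously realize $\mathsf{T}_\#\mu_0^1=\mu_1^1$ and $\mathsf{T}_\#\mu_0^2=\mu_1^2$. Since your perceptron sub-phase is such a map, your construction cannot begin to separate two inputs sharing a point of support. The large-$\beta$ attention in Step~3 is introduced only as a non-interference patch between epochs and never addresses this obstruction: by the time you invoke attention, the perceptron phase has already collapsed the $C_k$-mass of every $\mu_0^i$ into the \emph{same} cap, so there is no remaining information by which \Cref{thm: targets.atoms} (or anything else) could route different measures to different atoms.

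The missing idea is disentanglement \emph{before} matching. The paper opens by applying \Cref{prop: separation}, which uses the nonlinear ($\mu$-dependent) self-attention term to render the $N$ input supports pairwise disjoint, and separately applies it to the $N$ targets; only then does it invoke a $\mu$-independent perceptron flow (via \Cref{lem: hyp.propagation}, \Cref{lem: monge}, \Cref{lem: univ.approx}) to match, followed by time-reversal of the target-disentangling flow. Your Step~1 (approximating $\mathsf{T}^i$ by a step function) is fine and is essentially what the paper's proof of \Cref{lem: univ.approx} does via simple functions, and the idea of clustering cell-by-cell echoes \Cref{prop: compression}. But without a prior disentanglement phase based on self-attention, and without disentangling the targets so that the deposited atoms $y_{k,i}$ actually have room to differ across $i$, the plan cannot produce the ensemble matching claimed in \Cref{thm: main.result}.
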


Here $\mathsf{T}_{\#}\mu(A) = \mu(\mathsf{T}^{-1}(A))$ for $A\subset\S$ is the image measure. The special case in which the target is supported on $n$ atoms, and the input is either of this form or has a density, fits the above framework but admits a simpler proof, given in \Cref{proof: proof.restricted} (restricted case). The number of switches of  $\theta$ can be estimated by using structural properties of the measures---we postpone a discussion thereon to {\bf \Cref{sec: nb.disc}} and {\bf \Cref{sec: complexity.untanglement}}. 

\subsection{Overview of the proof} \label{sec: sketch}

We sketch the proof of \Cref{thm: main.result}.
The solution map $\Phi_{\text{fin}}^T:\mathscr{P}(\S)\to\mathscr{P}(\S)$ is constructed as\footnote{The philosophy is reminiscent to the proof of the Chow-Rashevskii theorem using iterated Lie brackets for the controllability of driftless systems \cite[\sec 3.3]{coron2007control}.}
$$
\Phi_{\text{fin}}^T\coloneqq(\Phi_{\theta_3}^{\frac{T}{3}})^{-1}\circ\Phi_{\theta_2}^{\frac{T}{3}}\circ\Phi_{\theta_1}^{\frac{T}{3}},
$$ 
where

\begin{enumerate}
\item $\Phi_{\theta_1}^t:\mathscr{P}(\S)\to\mathscr{P}(\S)$ is the solution map of \eqref{eq: cauchy.pb} on $[0,T/3]$, generated by piecewise constant parameters $\theta_1$, having $O(d\cdot N)$ switches, as to disentangle the supports of the input measures (the use of the attention component is \emph{necessary} for this step). After this step, the supports of the measures are disjoint:
\begin{equation} \label{eq: disentanglement.intro}
\supp\,\Phi_{\theta_1}^{\frac{T}{3}}(\mu_0^i)\cap\supp\,\Phi_{\theta_1}^{\frac{T}{3}}(\mu_0^j)=\varnothing\quad\text{ whenever }i\neq j.
\end{equation}
This is done in \Cref{prop: separation} in {\bf \Cref{sec: disentanglement}}.
The clue lies in following the insights of \cite{geshkovski2023mathematical}, which entail clustering of every individual measure to a single point mass in long time in the special regime $\bB=\beta I_d$ with $\beta\geq 0$ and $\bV=I_d$. Should the limit point masses corresponding to every input measure be located at different positions, the disentanglement property \eqref{eq: disentanglement.intro} would readily follow by taking the time horizon $T$ large enough. Unfortunately, characterizing the location of the limit point mass for general measures is an open problem. 
We instead consider a curated choice of $\bV$ to facilitate locating the limiting cluster for every measure, which we now sketch. Consider $N=2$ (the general case is argued by induction; see \Cref{lem: induction.barycenter}) and suppose that $\mathbb{E}_{\mu_0^1}[z]$ and $\mathbb{E}_{\mu_0^2}[z]$ are not colinear (this assumption is not needed, as seen in \Cref{lem: colinearity}). 
We can take $\bB\equiv0$ 
and 
\begin{equation*}
    \bV(t) \coloneqq \sum_{k=1}^{d-1} \alpha_k\alpha_k^\top 1_{[T_k,T_{k+1}]}(t),
\end{equation*}
where $\{\alpha_k\}$ is an orthonormal basis of $(\text{span}\,\mathbb{E}_{\mu_0^1}[z])^\perp$. Then there is some index $\ell$ such that $\langle \mathbb{E}_{\mu_0^1}[z], \alpha_\ell\rangle=0$ and $\langle \mathbb{E}_{\mu_0^2}[z], \alpha_\ell\rangle\neq 0$. Consequently $t\mapsto\langle \mathbb{E}_{\mu^i(t)}[z], \alpha_\ell\rangle$ remains constant when $i=1$, and does not change sign when $i=2$. After an elementary computation one can then see that any $x(t)\in\supp(\mu^2(t))$ converges to $\pm\alpha_\ell$ in long time, whereas $\mu^1(t)=\mu_0^1$ throughout. One can always rescale time so that the above holds at an arbitrary prescribed horizon, at the cost of increasing the norm of the parameters.

\item In the same vein, $\Phi_{\theta_3}^t:\mathscr{P}(\S)\to\mathscr{P}(\S)$ is the solution map of \eqref{eq: cauchy.pb} on $[2T/3, T]$, 
generated by piecewise constant parameters $\theta_3,$  
 as to disentangle the supports of the target measures:
\begin{equation*}
\supp\,\Phi_{\theta_3}^{\frac{T}{3}}(\mu_1^i)\cap\supp\,\Phi_{\theta_3}^{\frac{T}{3}}(\mu_1^j)=\varnothing\quad\text{ whenever }i\neq j.
\end{equation*}
Inverting $\Phi_{\theta_3}^t$ simply corresponds to running time backwards from $T$ to $2T/3$.
\item $\Phi_{\theta_2}^t:\mathscr{P}(\S)\to\mathscr{P}(\S)$ is the solution map of \eqref{eq: cauchy.pb} on $[T/{3},2T/{3}]$, generated by piecewise constant parameters $\theta_2$, alternating between $\bV\equiv0$ (namely, using solely the perceptron component) and $\bW\equiv0, \bV\equiv I_d$, which approximately matches the ensembles of disentangled input and target measures: 
\begin{equation*}
    \mathsf{W}_2\left((\Phi_{\theta_2}^{\frac{2T}{3}}\circ\Phi_{\theta_1}^{\frac{T}{3}})(\mu_0^i),\Phi_{\theta_3}^{\frac{T}{3}}(\mu_1^i)\right)\leq\varepsilon
\end{equation*}
for all $i$. This map can be constructed in three different ways depending on the nature of the target measures. If the target measures are point masses (\Cref{thm: targets.atoms}), one simply clusters the disentangled input measures to point masses using \Cref{prop: targets.atoms} in {\bf \Cref{sec: clustering}} ($\bW\equiv0, \bV\equiv I_d$) up to time $T/2$ say, and then matches the resulting point masses to the targets using \Cref{prop: interpolation.neural.ode} in {\bf \Cref{sec: neural.ode}} ($\bV\equiv0$) at time $2T/3$. 
This idea is then generalized to targets that are empirical measures with $M\geq 2$ atoms in {\bf \Cref{proof: proof.restricted}} (see the restricted case). 
The case of general, non-atomic target measures is significantly more involved. The construction is done in \Cref{lem: univ.approx} in {\bf \Cref{sec: proofs}} and the main idea is as follows. It can readily be seen (see \Cref{lem: hyp.propagation}) that the transport maps $\mathsf{T}^i$ are propagated by the flow maps constructed in the two previous steps, in the sense that there exists some integrable map $\uppsi:\S\to\S$ with $\uppsi|_{\supp (\Phi_{\theta_1}^{\frac{T}{3}}(\mu_0^i))}=\uppsi^i$ and 
\begin{equation*}
    \uppsi^i_\#\Phi_{\theta_1}^{\frac{T}{3}}(\mu_0^i) = \Phi^{\frac{T}{3}}_{\theta_3}(\mu_1^i).
\end{equation*} 
Since we construct $\Phi^t_{\theta_2}$ without using the nonlinear part of \eqref{eq: cauchy.pb}, we can identify $\Phi_{\theta_2}^t$ with a Lipschitz-continuous and invertible map from $\S$ to $\S$, which we also denote $\Phi^t_{\theta_2}$. Using standard arguments from optimal transport (\Cref{lem: monge}), we find
\begin{align*}
    \mathsf{W}_2\left((\Phi^{ \frac{2T}{3} }_{\theta_2} )_\# \Phi^{\frac{T}{3}}_{\theta_1}(\mu_0^i), \Phi_{\theta_3}^{\frac{T}{3}}(\mu_1^i)\right)&\lesssim \left\|\Phi_{\theta_2}^{\frac{2T}{3}}-\uppsi\right\|_{L^2(\mu)},
\end{align*}
where $\mu=\sum_{i=1}^N \Phi_{\theta_1}^{\frac{T}{3}}(\mu_0^i).$
The final result therefore boils down to approximating maps in $L^2(\S,\mu)$. This is technically involved due to the fact that $\mu$ can have both diffuse and atomic parts---both elements are treated using the clustering and matching constructions presented in {\bf \Cref{sec: clustering}} and {\bf \Cref{sec: neural.ode}} respectively.
\end{enumerate}

\begin{figure}[!ht]
    \centering
    \begin{tikzpicture}[node distance=2cm, scale=0.8, every node/.style={transform shape}]

    \tikzset{
        section/.style={rectangle, minimum width=3cm, minimum height=1cm, text centered, draw=black, fill=#1},
        arrow/.style={thick, ->, >=Stealth}
    }

    \node (1) [section=vibrantgreen, align=center] {
    {\hypersetup{linkcolor=white}\Cref{lem: hyp.propagation}{\color{white}:}}\\{\color{white}$\mathsf{T}^i$ are preserved}};
    \node (2) [section=skyblue, below of=1, align=center] {\hypersetup{linkcolor=white}\Cref{lem: monge}{\color{white}:}
    \\ {\color{white}$\varepsilon$-Matching $\Longleftrightarrow$}\\ {\color{white}Approximating maps}\\{\color{white}in $L^2(\S,\mu)$}};
    \node (3) [section=brightyellow, align=center, below of=2, yshift=-2cm] {\hypersetup{linkcolor=black}\Cref{prop: separation}:\\Disentanglement};
    \node (4) [section=skyblue, right of=2, xshift=4cm, align=center] {\hypersetup{linkcolor=white}\Cref{lem: univ.approx}\color{white}:\\ \color{white}Approximating maps in $L^2(\S,\mu)$};
    \node (5) [section=brightyellow, right of=4, xshift=3.5cm, align=center] {\hypersetup{linkcolor=black}\Cref{thm: main.result}};
    \node (6) [section=brightyellow, below of=2, xshift=4cm, yshift=-0.5cm, align=center] {\hypersetup{linkcolor=black}\Cref{prop: compression}\\ Clustering diffuse part\\ of $\mu$};
    \node (7) [section=brightyellow, below of=4, xshift=2.5cm, yshift=-0.5cm, align=center] {\hypersetup{linkcolor=black}\Cref{prop: interpolation.neural.ode}\\ Matching discrete part\\ of $\mu$};

    \draw [arrow] (1) to (2);
    \draw [arrow] (2) -- (4);
    \draw [arrow] (4) -- (5);
    \draw [arrow] (3) -- (2);
    \draw [arrow] (6) -- (4);
    \draw [arrow] (7) -- (4);
    \draw [arrow, bend left] (3) to (6); 
    \draw [arrow, bend right] (3) to (7); 

    \end{tikzpicture}
    \caption{High-level overview of the proof of \Cref{thm: main.result}. 
    }
    \label{fig: overview.2}
\end{figure}

Matching general ensembles of measures \emph{cannot} be done with a single \emph{linear} continuity equation, as is done in the Benamou-Brenier reformulation of optimal transport for instance \cite{benamou2000computational}, namely \eqref{eq: cauchy.pb} in which the vector field $v$ does not depend on $\mu(t)$. Indeed, take for instance $\mu_0^1,\mu_0^2\in\mathscr{P}_{\mathrm{ac}}(\S)$ such that 
$$\supp\,\mu_0^1\cap \supp\,\mu_0^2\neq\varnothing,$$
and similarly $\mu_1^1,\mu_1^2\in \mathscr{P}_{\mathrm{ac}}(\S)$ such that
\begin{equation} \label{eq: empty.intersection}
\supp\,\mu_1^1\cap \supp\,\mu_1^2=\varnothing.
\end{equation}
Then there cannot exist a single-valued $\mathsf{T}:\S\to\S$ such that $\mathsf{T}_\#\mu_0^1=\mu_1^1$ and $\mathsf{T}_\#\mu_0^2=\mu_1^2$, since there would have to exist $x\in \supp\,\mu_0^1\cap \supp\,\mu_0^2$ for which $\mathsf{T}(x)$ would have to take two different values due to \eqref{eq: empty.intersection}. This elementary counterexample is the starting point of our strategy, as the self-attention mechanism $\mathscr{A}_{\bB}[\mu]$ provides a nonlinear dependence\footnote{One can draw parallels with the failure of the Kalman rank condition \cite{sontag2013mathematical, coron2007control} for the ensemble controllability of linear systems in finite dimensions.} of the solution map to \eqref{eq: cauchy.pb} with respect to $\mu$, which we use precisely to disentangle overlapping measures. In this regard, \Cref{thm: main.result} is an ensemble controllability result for a {\it nonlinear} continuity equation, thus extending existing results on the controllability of the {\it linear} continuity equation---see \cite{brockett2008control, khesin2009, agrachev2009controllability, agrachev2009optimal, raginsky2024some, chen2016optimal, duprez2019approximate}.

\subsection{Outline} The remainder of the paper is organized as follows. We comment on assumptions and extensions of \Cref{thm: main.result} in {\bf \Cref{sec: discussion}}. In {\bf \Cref{sec: clustering}}, we provide explicit parameters that yield long-time clustering (i.e., convergence to discrete measures). {\bf \Cref{sec: disentanglement}} presents how initial measures with overlapping support can be disentangled over time using clustering. {\bf \Cref{sec: neural.ode}} addresses the matching problem of clouds of points, which is used after clustering and disentanglement. The proofs of \Cref{thm: main.result} and \Cref{thm: targets.atoms} can be found in {\bf \Cref{sec: proofs}}. We discuss some interesting questions regarding the number of switches needed for disentanglement in {\bf \Cref{sec: complexity.untanglement}}.

\subsection{Discussion} \label{sec: discussion}

\subsubsection{On our assumptions}  

\begin{itemize}
    \item The requirement $d \geq 3$ in \Cref{thm: main.result} stems from matching disentangled measures. In $d = 2$ the order of particles is preserved. To carry through our strategy, one would need to use self-attention to re-order the input measures and disentangle them.

    \item  When the targets are more general than point masses (as in \Cref{thm: main.result}), we assume that a transport map exists for each pair $(\mu_0^i,\mu_1^i)$—e.g., if $\mu_0^i\ll\diff x_{\S}$ (Brenier--McCann \cite{brenier1991polar,mccann2001polar}) or when $\mu_0^i$ and $\mu_1^i$ are empirical with the same number of atoms. This assumption is inessential for our purposes, since our result is approximate and any two non-empirical measures can be matched arbitrarily well.

    \item If $\mu_0^i$ and $\mu_1^i$ are empirical with $n$ and $m$ atoms ($n>m$, $n/m\notin\mathbb{N}$), no transport map exists, so \eqref{eq: cauchy.pb} cannot approximate $\mu_1^i$ arbitrarily well. Grouping $\lfloor n/m\rfloor$ atoms of $\mu_0^i$ per atom of $\mu_1^i$ yields $\mathsf{T}^i$ with $\mathsf{W}_2(\mathsf{T}^i_\#\mu_0^i,\mu_1^i)=O(m/n)$, hence the flow of \eqref{eq: cauchy.pb} approximates all $N$ targets within $O(\varepsilon+m/n)$.
\end{itemize} 

\subsubsection{On exact matching} 
One can inquire if it is possible to have \emph{exact} matching. i.e. $\varepsilon=0$, in \Cref{thm: main.result}. 

\begin{itemize}
    \item We can exactly match $N$ empirical input measures to $N$ empirical target measures as long as they have the same number of atoms. This follows as a corollary of the proof of \Cref{thm: main.result}, since no quantization is required in \Cref{lem: univ.approx}. 
    
    \item Since $\mathsf{v}[\mu(t)](t,\cdot)$ is Lipschitz, we cannot do exact transportation of an absolutely continuous measure to a discrete one even when $N=1$. Similarly, we cannot match a single input measure with connected support to a target measure whose support has multiple connected components. 
\end{itemize}

\begin{remark}[Beyond $\mathsf{W}_2$] 
We use $\mathsf{W}_2$ for convenience, but the argument should adapt to the KL divergence and yields a stronger result by \cite{bolley2005weighted} (the required Gaussian moment for the second KL argument holds on $\S$). After disentangling, we can match in $TV$ and then apply a reverse Pinsker inequality as in \cite{alvarez2025constructive}.

\end{remark}

\subsubsection{On the number of parameter switches} \label{sec: nb.disc}

For piecewise-constant parameters,
\[
\#\mathrm{switches}
=\#\mathrm{switches}_{\text{disent}}
+\#\mathrm{switches}_{\text{cluster}}
+\#\mathrm{switches}_{\text{match}}.
\]
If all supports pairwise overlap, $
\#\mathrm{switches}_{\text{disent}}=O(d\cdot N)$. The overall count in full generality is driven by clustering. We discuss three regimes:

\begin{enumerate}[leftmargin=2em,itemsep=.3ex]
\item (Targets are a Dirac---\Cref{thm: targets.atoms}.)
After disentanglement, a single constant parameter clusters each input to a point \cite{geshkovski2023mathematical}, so $\#\mathrm{switches}_{\text{cluster}}=0$. Matching to targets via the perceptron gives $\#\mathrm{switches}_{\text{match}}=O(N)$ \cite{ruiz2023neural,li2022deep}. 

\item (Targets are $m$-atomic, inputs too or have a density---\Cref{thm: main.result}.)
If the inputs are $m$-atomic, clustering is not required. Otherwise partition each disentangled support into $m$ pieces and cluster each piece with one constant parameter using \Cref{lem: two.balls}: $\#\mathrm{switches}_{\text{cluster}}=O(m\cdot N)$. Match to the $m$-atomic targets by \Cref{prop: interpolation.neural.ode} with $\#\mathrm{switches}_{\text{match}}=O(m\cdot N)$. Hence $\#\mathrm{switches}=O((m+d)N).$

\item (Inputs and targets are empirical with $n$ and $m$ atoms respectively.)
If $n\gg m$ or $m\mid n$, use $m$ balls per measure in \Cref{prop: compression} (via \Cref{prop: separation} and clustering to atoms as in \Cref{prop: targets.atoms}), yielding $\#\mathrm{switches}_{\text{cluster}}=O(m\cdot N)$ and therefore $
\#\mathrm{switches}=O((m+d)N).$
In the discrete-time setting of \cite{yun2019transformers}, the number of layers is independent of $N$ but exponential in $d$.
\end{enumerate}
In the most general case of \Cref{thm: main.result}, $\#\mathrm{switches}_{\text{cluster}}$ can be exponential in $d$ due to packing-number arguments (\Cref{rem: nb.disc.clustering}).

\subsubsection{On generalities}

We comment on greater generality in the choice of the Transformer architecture, which typically varies slightly from implementation to implementation. 

\begin{itemize}
    \item {\bf Increasing the width.} We often use rank-1 constant $\bW,\bU$; using rectangular matrices (greater \emph{width}) could reduce the number of switches ($\approx$ depth).
    
    \item {\bf Multi-head attention} replaces $\bV(t)\mathscr{A}_{\bB}[\mu(t)](t,x)$ by $\sum_{h=1}^H \bV_h(t)\mathscr{A}_{\bB_h}[\mu(t)](t,x).$
    We do not know how to exploit multiple heads $H$ in our proofs---some insights appear in \cite{chen2024provably}.

    \item {\bf Discrete time.} The continuous-time formulation yields a time-reversible equation used in our construction. Our results are expected to hold for suitable discretizations of \eqref{eq: cauchy.pb} with a sufficiently small time step.

    \item {\bf Beyond the ReLU.} All results remain if $(\cdot)_+$ is replaced by any Lipschitz nonlinearity that agrees with ReLU near the origin. The key requirement is that the induced flow (with $\bV\equiv0$) leaves any chosen spherical cap invariant.
\end{itemize}

\subsection{Notation and basic definitions} 

Unless stated otherwise, all integrals are over $\S$. We write $\llbracket 1,n\rrbracket\coloneqq\{1,\ldots,n\}$.  
For $A\subset \S$, $\mathrm{conv}\,A$ is the Euclidean convex hull in $\R^d$, and $\conv_g A$ is the geodesic convex hull in $\S$. Balls $B(x,R)$ centered at $x$ of radius $R>0$ are in $\S$ taken w.r.t. the geodesic distance $d_g$.

\subsubsection*{Acknowledgments} 

We thank the reviewers for their excellent suggestions, which have greatly improved the quality of the paper.
B.G. was supported by a Sorbonne Emergences grant and a gift by Google.
P.R. was supported by NSF grants DMS-2022448, CCF-2106377, and a gift from Apple.

\section{Clustering of the input data} \label{sec: clustering}

We begin by investigating how the input measures can be clustered using \eqref{eq: cauchy.pb}, in the sense that they are in the vicinity of discrete measures with few atoms.

In {\bf \Cref{sec: diracs}}, we cover the special case of clustering to a single atom, while the case of general discrete measures is discussed in {\bf \Cref{sec: general.dirac}}. The results of this section are used in {\bf \Cref{sec: disentanglement}}, and they are also a key step in our final matching strategy.

\subsection{Clustering to a single point mass} \label{sec: diracs}

The following is an adaptation of \cite[Lemma 6.4]{geshkovski2023mathematical}.

\begin{proposition} \label{prop: targets.atoms}
Suppose $\bB\in \mathscr{M}_{d\times d}(\mathbb{R})$ and  $\supp\,\mu_0$ is contained in an open hemisphere.
Then the solution $\mu$ to \eqref{eq: cauchy.pb}--\eqref{eq: vf} with data $\mu_0$ and $(\bV(\cdot),\bB(\cdot), \bW(\cdot))\equiv (I_d,\bB, 0)$ satisfies $\mathrm{diam}(\conv_g\,\supp\,\mu(t))\to0$ as $t\to+\infty$. 

Moreover, for $\varepsilon>0$ there exist $z\in \conv_g\,\supp \,\mu_0$ and $T>0$ such 
\begin{equation*}
\mathsf{W}_{\infty}\left(\mu(T),\updelta_{z}\right)\leq \varepsilon \quad \text{ and } \quad \inf \left\{t\geq 0\colon \mathsf{W}_2(\mu(t),\updelta_{z})\leq \varepsilon\right\}=O\left(\log \frac{1}{\varepsilon}\right)
\end{equation*}
\end{proposition}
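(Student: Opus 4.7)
The plan is to adapt the argument of \cite[Lemma 6.4]{geshkovski2023mathematical} from empirical measures to arbitrary measures, working with the characteristic flow of \eqref{eq: cauchy.pb}. With the prescribed parameters $\bV\equiv I_d$, $\bB(\cdot)\equiv\bB$, $\bW\equiv 0$, the characteristic starting at $x_0\in\supp\,\mu_0$ satisfies
$$\dot{x}(t)=\proj_{x(t)}\mathscr{A}_{\bB}[\mu(t)](t,x(t)),$$
and $\mathscr{A}_{\bB}[\mu(t)](t,x)$ is a softmax-weighted convex combination of points in $\supp\,\mu(t)$.

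First, I would establish a convex-hull invariance principle: the Euclidean convex hull $\mathcal{K}(t)\coloneqq\overline{\conv}\,\supp\,\mu(t)\subset\R^d$ is monotone non-increasing, in the sense $\mathcal{K}(t)\subset\mathcal{K}(s)$ for $t\geq s$. Indeed, at any boundary point $x\in\partial\mathcal{K}(t)\cap\S$ the vector $\mathscr{A}_{\bB}[\mu(t)](t,x)$ lies inside $\mathcal{K}(t)$, and its tangential component $\proj_x\mathscr{A}_{\bB}[\mu(t)](t,x)$ points inward relative to $\mathcal{K}(t)$. Combined with the hemisphere hypothesis $\langle x,n\rangle\geq c_0>0$ for all $x\in\supp\,\mu_0$ and some $n\in\S$, the invariance yields $\supp\,\mu(t)\subset\{x\in\S:\langle x,n\rangle\geq c_0\}$ for every $t\geq 0$.

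Next, I would set up a Lyapunov functional controlling the diameter, such as $\Psi(t)\coloneqq\sup_{x,y\in\supp\,\mu(t)}|x-y|^2$ (equivalently, the squared Euclidean diameter of $\mathcal{K}(t)$). Since the softmax kernel $e^{\langle\bB x,x'\rangle}$ is uniformly bounded above and below on the invariant hemisphere, a direct computation along characteristics---expanding $\frac{d}{dt}|x(t)-y(t)|^2$ via $\proj_x=I_d-xx^\top$ and using that the attention barycenter is a convex combination of points in $\supp\,\mu(t)$---produces a differential inequality
$$\Psi'(t)\leq -\lambda\,\Psi(t)$$
for some $\lambda=\lambda(c_0,\|\bB\|,d)>0$. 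Integration yields $\Psi(t)\leq\Psi(0)e^{-\lambda t}$. The nested chain $\{\mathcal{K}(t)\}_{t\geq 0}$ of compact sets with diameters shrinking to zero selects a unique limit point $z\in\bigcap_{t\geq 0}\mathcal{K}(t)\cap\S\subset\conv_g\,\supp\,\mu_0$, and delivers $\mathsf{W}_\infty(\mu(t),\updelta_z)\leq\sup_{x\in\supp\,\mu(t)}d_g(x,z)\leq Ce^{-\lambda t/2}$. Since $\mathsf{W}_2\leq\mathsf{W}_\infty$, choosing $T\asymp\lambda^{-1}\log(1/\varepsilon)$ yields both claims at once.

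The main obstacle is exhibiting a contraction rate $\lambda$ that does not degenerate. The hemisphere lower bound $c_0$ is crucial: it ensures that the tangential attraction toward the interior of $\mathcal{K}(t)$ does not vanish, even as the support contracts---without it, the constant $\lambda$ could degenerate and the exponential rate would be lost. A secondary technical point is justifying the Lyapunov computation in the measure setting despite the non-smooth boundary of $\supp\,\mu(t)$; this can be handled by working at the level of the Lipschitz characteristic flow of $\mathsf{v}[\mu(t)]$ (whose well-posedness follows from standard arguments, cf.\ \cite[\sec 6]{geshkovski2024emergence}) and pushing the estimate forward to $\mu(t)$.
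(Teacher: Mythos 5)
Your qualitative framework—nested convex hulls shrinking to a point, extracting $z$ from the intersection, bounding $\mathsf{W}_\infty$ by the diameter—matches the paper's in spirit; the paper works with $\conv_g$ rather than the Euclidean hull, which is the cleaner convention on $\S$, but this is cosmetic. The substantive divergence is in the quantitative step, and there your proposal has a gap. You assert a uniform Gronwall inequality
$\Psi'(t)\le -\lambda\,\Psi(t)$ with $\lambda=\lambda(c_0,\|\bB\|,d)>0$ valid for all $t\ge0$, but the rate does in fact degenerate: for $\bB=0$ the computation gives
$\frac{\diff}{\diff t}|x-y|^2=-|x-y|^2\big(\langle x,\mathbb{E}_{\mu(t)}[z]\rangle+\langle y,\mathbb{E}_{\mu(t)}[z]\rangle\big)$,
and when the support reaches deep toward the equator of the hemisphere the coefficient can be arbitrarily small or even negative; for $\bB\neq0$ there is an additional term driven by $\mathscr{A}_\bB[\mu](x)-\mathscr{A}_\bB[\mu](y)$, which is of size $O(\|\bB\|\,\mathrm{diam}\cdot|x-y|)$ and has no favorable sign. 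So the exponential contraction only becomes available once the diameter has already shrunk, and a one-line integration from $t=0$ does not go through. The paper avoids exactly this difficulty: it establishes $\mathrm{diam}(\conv_g\supp\mu(t))\to 0$ by a soft nested-compacta/contradiction argument that never needs a rate, and then obtains the $O(\log\frac{1}{\varepsilon})$ bound by citing \cite[Theorem~2.3]{chen2025quantitative} rather than deriving it inline. You flag this issue yourself ("the main obstacle is exhibiting a contraction rate $\lambda$ that does not degenerate"), but the proposal as written does not resolve it, and so the quantitative half of the statement is unproved.
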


\begin{proof}
The characteristic flow is $
\dot x(t)=\proj_{x(t)}\mathscr{A}_{\bB}[\mu(t)](x(t)).$
As $\supp\,\mu_0$ lies in an open hemisphere, the vector $\gamma(x)\coloneqq\mathscr{A}_{\bB}[\mu_0](x)/\|\mathscr{A}_{\bB}[\mu_0](x)\|$
is well defined for every $x\in \conv_g\,\supp\,\mu_0$ and points strictly into $\mathrm{int}\,\conv_g\,\supp\,\mu_0$. A first-order expansion of the flow at any boundary point $x_0\in\partial\conv_g\,\supp\,\mu_0$ shows
\[
\langle x(\tau)-x_0,\gamma(x_0)\rangle>0 \quad\text{for all small }\tau>0,
\]
hence $x(\tau)\in\mathrm{int}\,\conv_g\,\supp\,\mu_0$. Therefore for $0\le t_1<t_2$,
\begin{equation*}
\conv_g\,\supp\,\mu(t_2)\,\subset\mathrm{int}\,\conv_g\,\supp\,\mu(t_1)\subset\conv_g\,\supp\,\mu(t_1).
\end{equation*}
Thus $\phi(t)\coloneqq\mathrm{diam}(\conv_g\,\supp\,\mu(t))$
is decreasing and bounded from below by $0$, so $\phi(t)\to\ell$ as $t\to+\infty$. If $\ell>0$, compactness yields times $t_k\to+\infty$ with boundary points that do not move inward, contradicting the strict interior pointing above. Thus $\ell=0$. Because the geodesic convex hulls are nested in time, and their diameter goes to zero, there exists a unique $z\in\conv_g\,\supp\,\mu(0)$ such that $\mu\rightharpoonup\updelta_{z}$ as $t\to+\infty$.

Pick $T$ so that $\mathrm{diam}(\conv_g\,\supp\,\mu(T))\leq\varepsilon$. With the coupling $\pi_*=(\mathrm{Id},\mathsf{T})_\#\mu(T)$ where $\mathsf{T}(x)=z$ for any $x\in\supp\,\mu(T)$ we get $\mathsf{W}_p(\mu(T),\updelta_z)\leq \varepsilon$; we conclude by letting $p\to+\infty$. The final conclusion follows from \cite[Theorem 2.3]{chen2025quantitative}.
\end{proof}

\subsection{Clustering to discrete measures} \label{sec: general.dirac}

The following result ensures that an ensemble of measures with disjoint supports can be clustered, up to arbitrary precision, to finitely many atoms within their own support, all by means of the same flow map. 

\begin{proposition} \label{prop: compression}
Suppose $\mu_0^i$ has no atoms and $\conv_g\,\supp\,\mu^i_0\cap\conv_g\,\supp\,\mu_0^j=\varnothing$ for $i\neq j$. Fix $M\geq1$, and for any $i\in\llbracket1,N\rrbracket$ consider
\begin{equation*}
    \mu^i_1\coloneqq\sum_{k=1}^M \alpha_k^i \updelta_{x_{k}^i}\in\mathscr{P}(\S)
\end{equation*}
where $x_k^i\in\conv_g\,\supp\,\mu_0^i$, with $x_k^i = x_{k'}^j$ if and only if $(k, i) = (k', j)$. Then for any $T>0$ and $\varepsilon>0$ there exist piecewise constant $(\bW,\bU,b):[0,T]\to\mathscr{M}_{d\times d}(\R)^2\times\R^d$ such that for any $i$, the corresponding solution $\mu^i$ to \eqref{eq: cauchy.pb}--\eqref{eq: vf} with data $\mu_0^i$, $\bV\equiv0$ and the above parameters, satisfies $\mathsf{W}_2(\mu^i(T),\mu_1^{i})\leq\varepsilon$ as well as, for $i\neq j$,
\begin{equation*}
\conv_g\,\supp\,\mu^i(T)\cap\conv_g\,\supp\,\mu^j(T)=\varnothing.
\end{equation*}
\end{proposition}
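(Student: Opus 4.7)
The plan exploits that with $\bV\equiv 0$ the equation \eqref{eq: cauchy.pb} is a linear continuity equation: the vector field
\begin{equation*}
\mathsf{v}(t,x)=\proj_x\bW(t)\bigl(\bU(t)x+b(t)\bigr)_+
\end{equation*}
does not depend on $\mu$, so there is a single characteristic flow $X^t:\S\to\S$ simultaneously implementing $\mu^i(t)=(X^t)_{\#}\mu_0^i$ for every $i\in\llbracket 1,N\rrbracket$. It therefore suffices to design one piecewise constant perceptron triple whose flow at time $T$ sends the mass of each $\mu_0^i$ close to the correct atoms of $\mu_1^i$, without ever leaking mass outside the corresponding hull $\conv_g\,\supp\,\mu_0^i$.

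First I would fix a radius $r\in(0,\varepsilon/2)$ small enough that the geodesic balls $B_k^i\coloneqq B(x_k^i,r)$ are pairwise disjoint across all $(i,k)$ and strictly contained in the (already disjoint) sets $\conv_g\,\supp\,\mu_0^i$. Since $\mu_0^i$ has no atoms, a standard dyadic slicing of $\supp\,\mu_0^i$ produces a finite measurable decomposition $\supp\,\mu_0^i=\bigsqcup_{k=1}^{M}\bigsqcup_{\ell=1}^{L_{ik}}C_{k,\ell}^i$ with $\sum_{\ell}\mu_0^i(C_{k,\ell}^i)=\alpha_k^i$ for every $k$, and each piece $C_{k,\ell}^i$ contained in a geodesic cap $D_{k,\ell}^i\subset\conv_g\,\supp\,\mu_0^i$ of arbitrarily small diameter. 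By refining further we may in addition assume that all these caps are pairwise disjoint and disjoint from every target ball $B_{k'}^{i'}$.

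Next I would enumerate the triples $(i,k,\ell)$ in some fixed order and assign to each a time-slot of length $T/(\sum_{i,k}L_{ik})$. On the slot corresponding to $(i,k,\ell)$ I would invoke \Cref{lem: two.balls} to obtain a constant rank-one triple $(\bW,\bU,b)$ whose ReLU perceptron field is supported inside $D_{k,\ell}^i$—arranged by taking $\bU=e_1 u^\top$ and $b=-ce_1$ with $D_{k,\ell}^i=\{x\in\S:\langle u,x\rangle>c\}$—and which, restricted to that cap, drives every particle into $B_k^i$ within the allotted slot. Concatenating the slots gives the desired $(\bW,\bU,b)\in L^\infty((0,T);\mathscr{M}_{d\times d}(\R)^2\times\R^d)$. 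By construction, at time $T$ each piece $C_{k,\ell}^i$ has been pushed into $B_k^i$, so coupling each $C_{k,\ell}^i$ to the point mass at $x_k^i$ yields
\begin{equation*}
\mathsf{W}_2(\mu^i(T),\mu_1^i)^2\le\sum_{k=1}^M\sum_{\ell=1}^{L_{ik}}\mu_0^i(C_{k,\ell}^i)\cdot(2r)^2=(2r)^2\le\varepsilon^2,
\end{equation*}
while the pairwise disjointness $\supp\,\mu^i(T)\subset\bigcup_k B_k^i\subset\conv_g\,\supp\,\mu_0^i$ across $i$ is inherited from that of the hulls.

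The main obstacle is the non-interference between successive phases: once a piece has been compressed into some $B_k^i$, later phases must not reactivate it, and pieces still awaiting processing must not be displaced. This is precisely what the pairwise disjointness of all the caps $D_{k,\ell}^i$ and the target balls $B_k^i$ is engineered to ensure, together with the fact—absorbed into \Cref{lem: two.balls}—that the intermediate trajectories produced by each rank-one ReLU perceptron remain inside the chosen activation cap until they land in the prescribed target ball. Establishing this non-exit property for a single constant triple is the real work; everything else in the proposition reduces to partitioning and concatenation.
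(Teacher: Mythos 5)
The core of your plan has a genuine gap at the critical step: you want, on each time-slot, a \emph{single} constant perceptron triple whose activation region is the cap $D_{k,\ell}^i$ and which drives the mass of $C_{k,\ell}^i$ into $B_k^i$, while at the same time you insist that the caps $D_{k,\ell}^i$ are pairwise disjoint \emph{and disjoint from every target ball}. These two requirements are incompatible. The ReLU field $\proj_x\bW(\bU x+b)_+$ you describe vanishes identically outside $D_{k,\ell}^i$, so the flow is the identity there; any trajectory starting in $D_{k,\ell}^i$ can only slow down and stop as it approaches $\partial D_{k,\ell}^i$, and it can never enter the disjoint set $B_k^i$. This is exactly the non-exit property you note \Cref{lem: two.balls} is built on—it is not a feature you can get around, it is the obstruction. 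Conversely, if you let $B_k^i\subset D_{k,\ell}^i$ so that a single application of \Cref{lem: two.balls} does suffice, then the caps $D_{k,1}^i,\ldots,D_{k,L_{ik}}^i$ all contain the same target ball and cannot be pairwise disjoint. The paper resolves this by never trying to jump a gap in one step: it packs each piece with finitely many balls, connects each packing ball to the target by a \emph{chain} of overlapping balls living inside the (connected) piece, and applies \Cref{lem: tubular.mass.movement}—a concatenation of many instances of \Cref{lem: two.balls}, each moving mass only into the overlap with the next ball—to thread the mass along the chain. That is a genuinely multi-switch construction per piece; the number of switches reflects the chain lengths, as recorded in \Cref{rem: nb.disc.clustering}.

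There is a second, related gap in your partition step. You claim a finite decomposition of $\supp\mu_0^i$ into sets $C_{k,\ell}^i$ with \emph{exact} masses summing to $\alpha_k^i$, each contained in a geodesic cap of small diameter, with all caps pairwise disjoint. Nonatomicity does give you arbitrary-mass measurable pieces of small diameter, but if the pieces must sit inside \emph{pairwise disjoint geodesic caps} (i.e. half-spaces, the only shape a rank-one ReLU activates), you cannot capture all of the mass: a union of finitely many disjoint geodesic caps cannot cover $\supp\mu_0^i$. The paper accepts exactly this loss—Step 2 packs each piece $\mathscr{C}_k^i$ with disjoint balls capturing mass $\alpha_k^i-\delta$ only, and the final $\mathsf{W}_1$ estimate carries the leftover $\delta$ and the $(1-\overline\varepsilon)^{L_k^i}$ degradation from the chain, sending both to zero at the end. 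Your $\mathsf{W}_2$ bound $\leq (2r)^2$ presumes perfect mass accounting and no leakage along the way, which the construction cannot deliver. The broader strategy—exploit linearity at $\bV\equiv0$, decompose, and push mass with localized ReLU fields—is the same as the paper's; the missing ideas are the chain-of-balls mechanism and the $\delta$-leakage bookkeeping.
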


The number of switches in $(\bW,\bV, b)$ can also be accounted for---see \Cref{rem: nb.disc.clustering}.

\begin{proof} 
We split $[0,T]=\bigcup_{i\in\llbracket1,N\rrbracket} [T_{i-1},T_{i}],$
where $0=T_0<T_1<\ldots<T_N=T$ are to be determined later on. 
We look to apply \Cref{lem: tubular.mass.movement} separately within each interval, thus, dealing with one measure at a time. Namely, consider
\begin{equation*}
    (\bW,\bU,b)(t)=\sum_{i=1}^N (\bW_i,\bU_i,b_i)(t)1_{[T_{i-1},T_i)}(t),
\end{equation*}
where $(\bW_i,\bU_i,b_i)$ are, roughly speaking, piecewise constant parameters stemming from a repeated application of \Cref{lem: tubular.mass.movement}. We critically use \eqref{eq: eq.tubular.mass.movement} to ensure that when we act on the $i$-th measure in $[T_{i-1}, T_i]$, all the other measures remain invariant, so 
\begin{equation} \label{eq: init.invariant}
 \mu^i(T_{i-1}) = \mu_0^i.   
\end{equation}
Therefore, we take $i\in\llbracket1,N\rrbracket$ to be arbitrary. We proceed in three steps.

\subsubsection*{Step 1. Partitioning each support into $M$ pieces}

Let $\mathscr{C}^i\coloneqq\supp\,\mu^i_0$, and consider a partition $\{\mathscr{C}^i_k\}_{k\in\llbracket1,M\rrbracket}$ of $\mathscr{C}^i$ consisting of pairwise disjoint sets with connected interiors and satisfying $\mu_0^i(\mathscr{C}_k^i)\coloneqq\alpha_k^i$ and $x_k^i\in \mathrm{int}\,\mathscr{C}_k^i$.
Namely
\begin{equation*}
\mathscr{C}^i=\bigcup_{k} \mathscr{C}^i_k
\end{equation*}
with $\mathscr{C}_k^i\cap \mathscr{C}_{k'}^i=\varnothing$ if $k\neq k'$ (see \Cref{fig: shattering}).

\begin{figure}[!ht]
    \centering
    \includegraphics[scale=0.5]{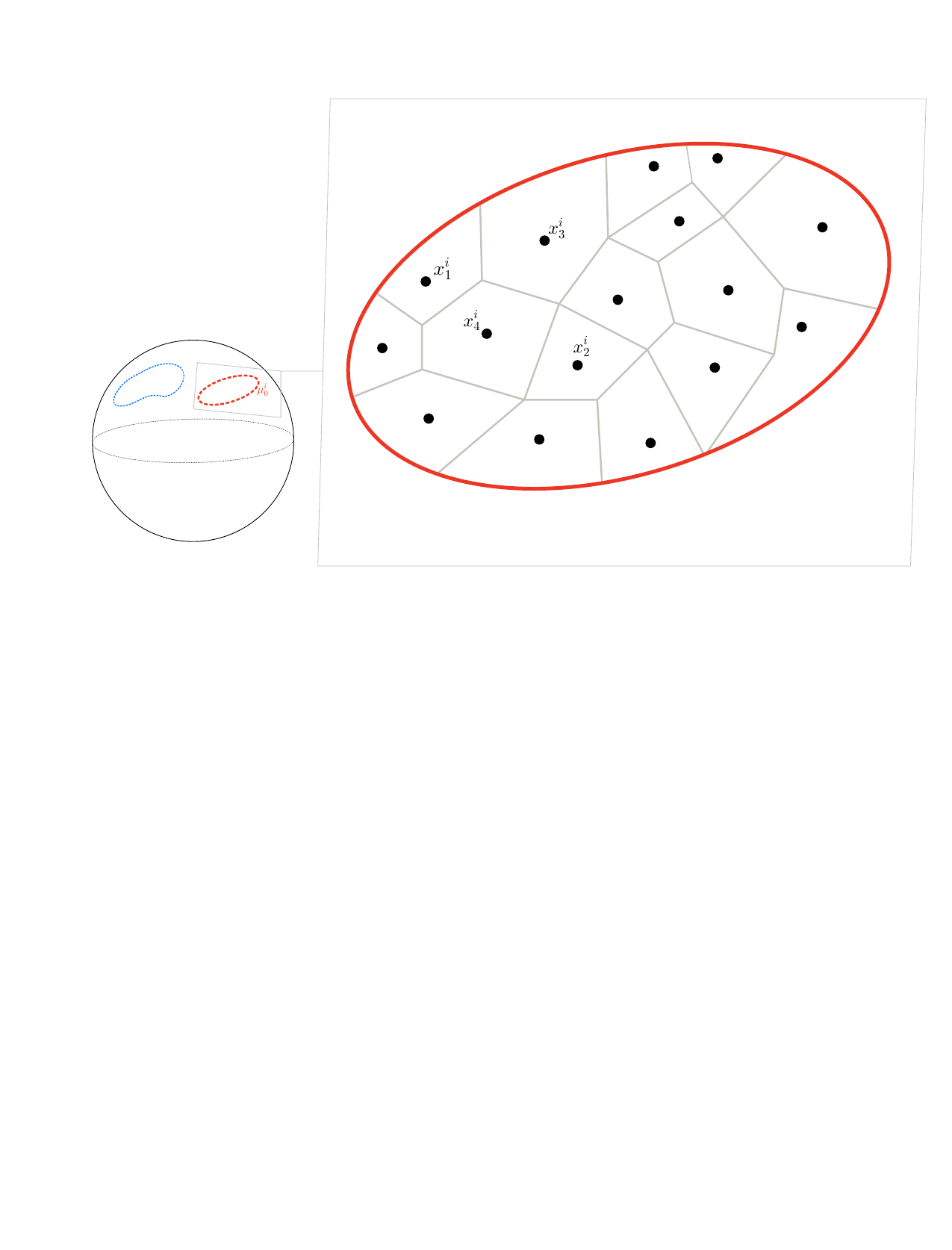}
    \caption{Partitioning $\mathscr{C}^i\coloneqq\supp\,\mu^i_0$ into $M$ pieces with connected interiors.}
    \label{fig: shattering}
\end{figure}

\subsubsection*{Step 2. Packing each part $\mathscr{C}_k^i$ with balls}

We henceforth also fix 
$k\in\llbracket1,M\rrbracket$.
Let $\delta>0$ to be fixed and determined later on. Pack $\mathscr{C}_k^i$ with $\mathsf{N}_{k}^i(\delta)\geq1$ disjoint open balls $B(z_{n,i,k},R_{n,i,k})\subset\mathscr{C}_k^i$ for $n\in\llbracket1,\mathsf{N}_{k}^i(\delta)\rrbracket$,
with $z_{n,i,k}\in \S$ and $R_{n,i,k}>0$, such that
\begin{equation} \label{eq: reminder.packing}
    \mu_0^i\left(\bigcup_{n} B(z_{n,i,k},R_{n,i,k})\right)=\alpha_k^i-\delta.
\end{equation}
We define a target ball contained in $\mathscr{C}_k^i$ to which we aim to send the mass contained in the packing \eqref{eq: reminder.packing}.
Fix the anchor $x_{k}^i\in \mathrm{int}\mathscr{C}^i_k$ and let $\eta>0$ to be determined later on (the same for all indices $(i, k)$) but small enough so that $
\mathscr{B}_k^i\coloneqq B(x_{k}^i,\eta)\subset\mathrm{int}\mathscr{C}^i_k.$
We also pick $\mathscr{B}_k^i$ to satisfy $\mathscr{B}_k^i\subset B(z_{n,i,k},R_{n,i,k})$ for some $n\in\llbracket1,\mathsf{N}_k^i(\delta)\rrbracket$ (see \Cref{fig: packing}).

\subsubsection*{Step 3. Sending most of the mass to $\mathscr{B}_k^i$}

\begin{figure}[!ht]
    \centering
    \includegraphics[scale=0.4]{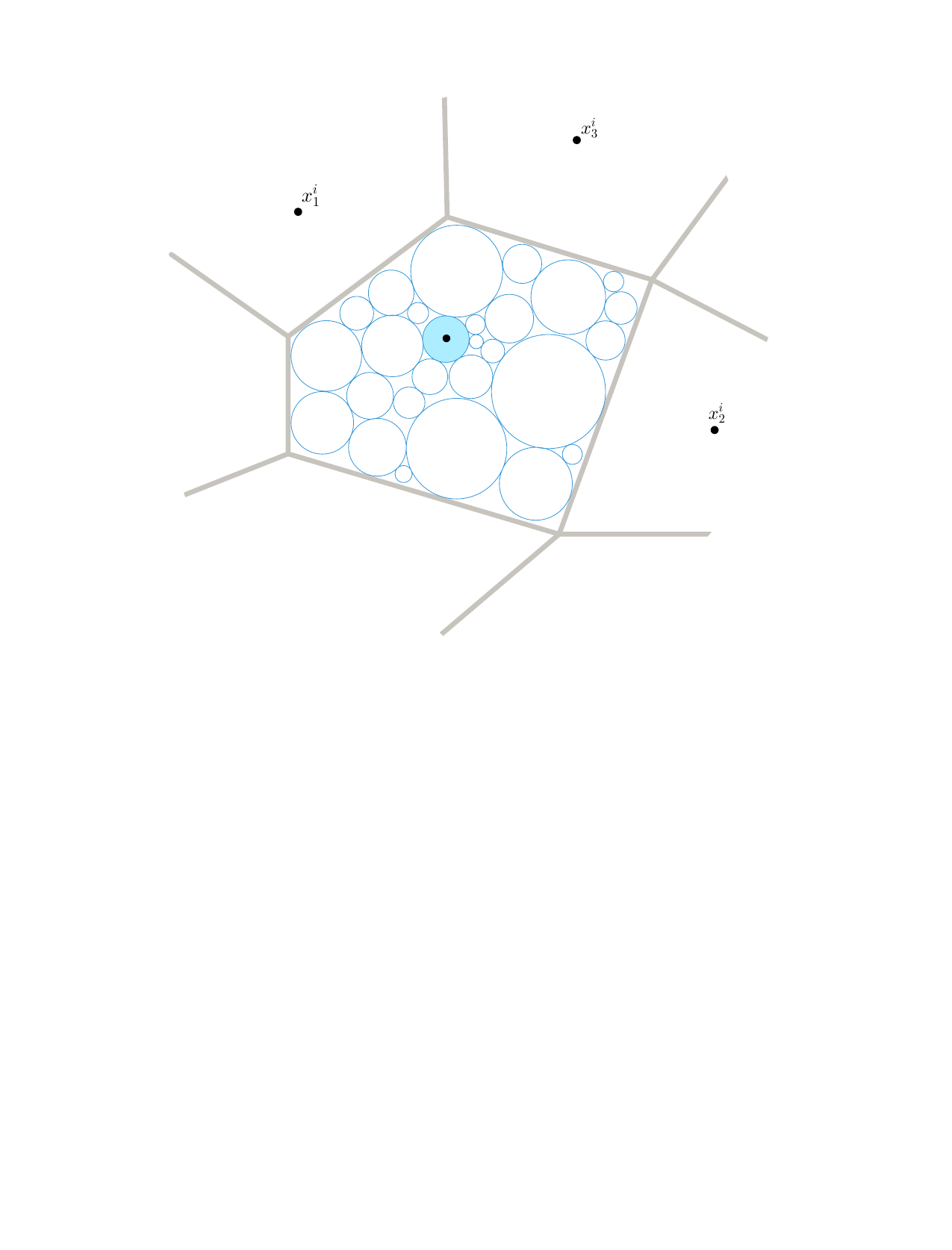}
    \caption{ Step 2: packing the piece $\mathscr{C}_k^i$ of the partition of $\mathscr{C}^i=\mathrm{supp}\,\mu_0^i$ with balls whose union has mass $\mu_0^i(\mathscr{C}_k^i)-\delta$. A single anchor point $x_k^i$ lies in this piece. The goal of Step 3 is to repeatedly use \Cref{lem: tubular.mass.movement} to transfer the mass of each ball to the one highlighted in blue. }
    \label{fig: packing}
\end{figure}

As $\mathrm{int}\mathscr{C}^i_k$ is connected and thus path-connected (here equivalent for open sets), for every $n\in\llbracket1,\mathsf{N}_{k}^i(\delta)\rrbracket$ we can find a sequence of open balls $\{\mathscr{B}_{\ell,n}\}_{\ell\in\llbracket0,L_{k, n}^i\rrbracket}\subset \mathscr{C}_k^i$ satisfying
\begin{equation}\label{eq: homotopy}
    \begin{aligned}
        &\mathscr{B}_{0,n}=B(z_{n,i,k},R_{n,i,k}),\quad\mathscr{B}_{\ell,n}\cap \mathscr{B}_{\ell+1,n}\neq \varnothing,\\
        &\mathscr{B}_{L_{k,n}^i,n}=\mathscr{B}_k^i,\quad\mathscr{B}_{\ell,n}\cap\mathscr{B}_{\ell',n}=\varnothing\hspace{0.5cm}\text{ if }|\ell'-\ell|\geq 2.
    \end{aligned}
\end{equation}
Set $L_k^i\coloneqq\max_{n} L_{k,n}^i$ and  fix an arbitrary $\overline{\varepsilon}>0$ to be determined later on. 
We apply \Cref{lem: tubular.mass.movement} for each piece $k\in\llbracket1,M\rrbracket$ and $n\in\llbracket1,\mathsf{N}_k^i(\delta)\rrbracket$---recalling \eqref{eq: init.invariant}---to find piecewise constant $(\bW_i, \bU_i, b_i):[T_{i-1}, T_i]\to\mathscr{M}_{d\times d}(\mathbb{R})^2\times\R^d$ with at most $
    M\cdot \max_{k} \mathsf{N}_k^i(\delta)\cdot L_k^i$
switches, such that
\begin{align}\label{eq: mass.lower.bound}
    \mu^i(T_i&,\mathscr{B}_k^i)\geq (1-\overline{\varepsilon})^{L_k^i} \mu_0^i\left(\bigcup_{n}\bigcup_{\ell=0}^{L_{k,n}^i} \mathscr{B}_{\ell, n} \right)\nonumber\\
    &\geq (1-\overline{\varepsilon})^{L_k^i} \mu_0^i\left(\bigcup_{n} B(z_{n,i,k},R_{n,i,k})\right)\overset{\eqref{eq: reminder.packing}}{=}(1-\overline{\varepsilon})^{L_k^i} \left(\alpha_k^i-\delta \right).
\end{align}
Moreover, $\mu^i(T_i)=\Phi^{T_i}_\#\mu^i_0$, and $\Phi^{T_i}(x)=x$ for all $x\notin\supp\,\mu_0^i$ because of \eqref{eq: eq.tubular.mass.movement}. 
Using Kantorovich-Rubinstein duality,
\begin{align*}
&\hspace{0.1cm}\mathsf{W}_1(\mu^i(T_i),\mu^i_1) = \sup_{\mathrm{Lip}(\phi)\leq 1}\left|\int\phi(\mu^i(T_i)-\mu_1^i)\right|\\
    &\hspace{0.2cm}= \sup_{\mathrm{Lip}(\phi)\leq 1}\left|\sum_{k}\int_{\mathscr{B}_k^i}\phi(\mu^i(T_i)- \mu_1^i)+\int_{\S\setminus\bigcup_{k}\mathscr{B}_k^i}\phi(\mu^i(T_i)-\mu_1^i)\right|.
\end{align*}
Note that without loss of generality we can maximize over all $\phi\in W^{1,\infty}(\S)$ with $\mathrm{Lip}(\phi)\leq1$ and of average $0$. Such functions have an $L^\infty(\S)$--norm bounded by the length of any geodesic, namely $2\pi$. 
Going term by term in the identity above, using \eqref{eq: mass.lower.bound} and the definition of $\mathscr{B}_k^i$ we find
\begin{align*}
\int_{\mathscr{B}_k^i}& \phi(\mu^i(T_i) -\mu_1^i)=
    \int_{\mathscr{B}_k^i} \phi\mu^i(T_i) -\alpha_k^i\phi(x_k^i)\\
    &=\int_{\mathscr{B}_k^i} \phi\mu^i(T_i) -(\alpha_k^i-\overline{\delta})\phi(x_k^i)-\overline{\delta}\phi(x_k^i)=\int_{\mathscr{B}_k^i} (\phi(x)-\phi(x_k^i))\mu^i(T_i) -\overline{\delta}\phi(x_k^i),
\end{align*}
where $\overline{\delta}\coloneqq\alpha_k^i-\mu^i(T_i,\mathscr{B}_k^i)>0$. 
By virtue of \eqref{eq: homotopy} and \eqref{eq: eq.tubular.mass.movement} we also gather that $\mu(T_i,\mathscr{C}_k^i)=\mu_0(\mathscr{C}_k^i)=\alpha_k^i$, and therefore 
$\alpha_k^i=\mu^i(T_i,\mathscr{C}_k^i)\geq \mu^i(T_i,\mathscr{B}_k^i).$
Owing to \eqref{eq: mass.lower.bound}, we find
$\overline{\delta}\leq \alpha_k^i-(1-\overline{\varepsilon})^{L_k^i}(\alpha_k-\delta),$
which clearly goes to $0$ as $\overline{\varepsilon}$ and $\delta$ go to $0$. Therefore
\begin{equation*}
    \left|\int_{\mathscr{B}_k^i} \phi(\mu^i(T_i) -\mu_1^i)\right|\leq \eta \mu^i(T_i,\mathscr{B}_k^i)+\overline{\delta}\|\phi\|_{L^\infty(\S)},
\end{equation*}
which tends to $0$ as $\delta$, $\overline{\varepsilon}$ and $\eta$ tend to zero. On the other hand, thanks to \eqref{eq: mass.lower.bound},
\begin{align*}
\left|\int_{\S\setminus\bigcup_{k}\mathscr{B}_k^i}\phi(\mu^i(T_i)-\mu_1^i)\right|&\leq 2\pi \mu^i\left(T_i,\S\setminus\bigcup_{k} \mathscr{B}_k^i\right)\\
    &\leq 2\pi \left|1-(1-\overline{\varepsilon})^{\max_{k} L_k^i}\sum_{k}(\alpha_k^i-\delta)\right|\\
    &\leq 2\pi \left|1-(1-\overline{\varepsilon})^{\max_{k} L_k^i}(1-M\delta)\right|,
\end{align*}
which also tends to 0 as $\overline{\varepsilon}$ and $\delta$ tend to $0$.
Therefore, we can choose $\overline{\varepsilon},\delta$ and $\eta$ small enough so that $\mathsf{W}_1\left(\mu^i(T_i),\mu_1^i\right)\leq \varepsilon.$
We can conclude since all Wasserstein distances are equivalent on $\S$.
\end{proof}

\begin{remark} \label{rem: nb.disc.clustering}
We deduce that $(\bW, \bU, b)$ have at most $
    N \cdot M \cdot \max_{\substack{(i, k)\in\llbracket1,N\rrbracket\times\llbracket1,M\rrbracket}} \mathsf{N}_k^i(\delta) \cdot \max_{n\in\llbracket1,\mathsf{N}_k^i(\delta)\rrbracket} L_{k,n}^i$
    switches; $\mathsf{N}_k^i(\delta)$ and $L_{k, n}^i$ being defined in Steps 2 and 3 respectively.
\end{remark}

\section{Disentangling supports} \label{sec: disentanglement}

We show that flows generated by self-attention can disentangle measures with overlapping supports--it actually suffices to consider $\bB\equiv0$. 
Set $\mathbb{Q}_1^{d-1}\coloneqq\S\cap (\R_{>0})^d$ and
 \begin{equation} \label{eq: average.vf}
    \mathsf{v}[\mu](t,x) = \proj_x(\bV(t)\mathbb{E}_{\mu(t)}[z]+\bW(t)(\bU(t)x+b(t))_+).
\end{equation}

\begin{proposition} \label{prop: separation}
Let $T>0$ and $\mu_0^i\in\mathscr{P}(\Q_1^{d-1})$.
There exists a piecewise constant $\theta:[0,T]\to\Uptheta$, having at most $O(d\cdot N)$ switches, such that for all $i\in\llbracket1,N\rrbracket$, the solution $\mu^i$ to \eqref{eq: cauchy.pb}--\eqref{eq: average.vf} with data $\mu_0^i$ and $\theta$ satisfies $\conv_g\, \supp\,\mu^i(T)\cap \conv_g \,\supp\,\mu^j(T)=\varnothing$
if $i\neq j$. 
\end{proposition}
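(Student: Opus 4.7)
The plan is to additionally set $\bW\equiv 0$, so that only $\bV(t)\mathbb{E}_{\mu(t)}[z]$ drives the dynamics in \eqref{eq: average.vf}. For a rank-one choice $\bV=\alpha\alpha^\top$ with $\alpha\in\S$, the characteristic flow reads $\dot x=\langle \alpha,\mathbb{E}_\mu[z]\rangle\,\proj_x\alpha$, and a direct computation from the continuity equation shows that the scalar $f^i(t)\coloneqq\langle \alpha,\mathbb{E}_{\mu^i(t)}[z]\rangle$ obeys
\begin{equation*}
\dot f^i(t) \;=\; f^i(t)\int_{\S}\bigl(1-\langle \alpha,x\rangle^2\bigr)\,\mathrm{d}\mu^i(t)(x).
\end{equation*}
Since the integrand on the right-hand side is nonnegative, this scalar linear ODE produces a sharp dichotomy: if $f^i(0)=0$ then $\mu^i(t)\equiv \mu_0^i$, whereas if $f^i(0)\neq 0$ then $|f^i(t)|\nearrow 1$ exponentially and $\mu^i(t)\rightharpoonup \updelta_{\mathrm{sgn}(f^i(0))\alpha}$ as $t\to+\infty$. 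This dichotomy is the engine of the entire argument.

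For the base case $N=2$ with linearly independent first moments $m_0^1\coloneqq\mathbb{E}_{\mu_0^1}[z]$ and $m_0^2\coloneqq\mathbb{E}_{\mu_0^2}[z]$, I would pick a unit $\alpha\perp m_0^1$ with $\langle \alpha,m_0^2\rangle\neq 0$ and use $\bV=\alpha\alpha^\top$ on $[0,T]$. Since $m_0^1\in(\R_{>0})^d$ lies strictly in the positive orthant, any $\alpha$ orthogonal to $m_0^1$ satisfies $\pm\alpha\notin\overline{\mathbb{Q}_1^{d-1}}$, so the concentration points of $\mu^2$ are at positive geodesic distance from $\mathbb{Q}_1^{d-1}\supset\supp\,\mu^1(T)=\supp\,\mu_0^1$. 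Choosing $T$ large enough, $\mu^2(T)$ sits in a sufficiently small neighborhood of $\pm\alpha$ that the geodesic convex hulls of the two supports are disjoint; this uses a single switch. Arbitrary $T>0$ is accommodated by time-rescaling $\bV$, at the cost of a larger parameter norm. The degenerate colinear case where $m_0^1, m_0^2$ are parallel is reduced to the generic one via \Cref{lem: colinearity} at the cost of $O(1)$ extra switches.

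For general $N$, the plan is to induct on $N$ using \Cref{lem: induction.barycenter}. At each inductive step I would employ a piecewise constant $\bV(t)=\sum_{k=1}^{d-1}\alpha_k\alpha_k^\top\, 1_{[T_k,T_{k+1}]}(t)$, where $\{\alpha_k\}$ is an orthonormal basis of the joint orthogonal complement of the first moments of the measures already disentangled. Each $\alpha_k$ in this family leaves the previously disentangled measures invariant by the dichotomy, while a pigeonhole argument guarantees that at least one $\alpha_k$ is not orthogonal to the first moment of the remaining measure and therefore drags its support into a neighborhood of $\pm\alpha_k$, disjoint from the others. This contributes $O(d)$ switches per added measure, for a total of $O(d\cdot N)$.

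\textbf{Main obstacle.} The hard part is the induction bookkeeping: at each step one must secure a direction $\alpha$ that simultaneously (a) lies in the joint orthogonal complement of the first moments of all measures already disentangled---so those remain invariant---and (b) has nonzero inner product with the first moment of the next target measure---so that it is actually moved away. Generic data with $d\geq N$ makes this automatic, but in the worst case the first moment of the remaining measure may lie in the linear span of the others, forcing one to temporarily perturb previously separated measures and to cycle through all $d-1$ orthonormal directions of the relevant complement to defeat degenerate alignments. This is precisely what \Cref{lem: induction.barycenter} accomplishes and where the factor of $d$ in the switch count originates.
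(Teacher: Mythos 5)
Your computation of the scalar ODE for $f^i(t)=\langle\alpha,\mathbb{E}_{\mu^i(t)}[z]\rangle$ is correct, and the resulting dichotomy (moment orthogonal to $\alpha$ implies the measure is frozen; otherwise it concentrates near the appropriate sign of $\alpha$) is exactly the engine of the paper's argument. Your base case $N=2$ is also essentially the paper's sketch. The inductive step, however, has a genuine gap, and it is not a minor bookkeeping issue.

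\textbf{Where the argument breaks.} Your plan is: at step $N$, choose $\alpha$ in the \emph{joint} orthogonal complement of the previously separated moments $m^1,\dots,m^{N-1}$, so those measures stay frozen, and hope that some such $\alpha$ satisfies $\langle\alpha,m^N\rangle\neq 0$ so that $\mu^N$ moves off. But if $m^N\in\mathrm{span}(m^1,\dots,m^{N-1})$ --- which is \emph{forced} once $N>d$ and can happen for any $N\geq 3$ --- then \emph{every} vector in $\mathrm{span}(m^1,\dots,m^{N-1})^\perp$ is orthogonal to $m^N$, so no pigeonhole argument can produce the desired $\alpha$. Relatedly, your formula
$\bV(t)=\sum_{k=1}^{d-1}\alpha_k\alpha_k^\top\,1_{[T_k,T_{k+1}]}(t)$
tacitly assumes the joint orthogonal complement has dimension $d-1$, which is false as soon as the $m^i$, $i<N$, span a space of dimension $\geq 2$. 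You notice the obstacle yourself at the end and write that \Cref{lem: induction.barycenter} ``accomplishes precisely'' the fix by ``cycling through directions'', but that is not what \Cref{lem: induction.barycenter} does.

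\textbf{What the paper does instead.} \Cref{lem: induction.barycenter} takes $\{\alpha_k\}_{k\in\llbracket1,d-1\rrbracket}$ to be an orthonormal basis of $(\mathrm{span}\,m^N)^\perp$, i.e.\ the orthogonal complement of the \emph{new} moment only, which always has dimension $d-1$ regardless of $N$. With this choice the roles are reversed: $\mu^N$ (together with the auxiliary $\nu$ whose moment is colinear to $m^N$) is the one that stays frozen, while each $\mu^i$ with $i<N$ --- whose moment is \emph{pairwise} non-colinear with $m^N$, hence has a nonzero component along some $\alpha_k$ --- is dragged out of $\mathbb{Q}_1^{d-1}$ toward $\pm\alpha_k$. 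Once the ``old'' measures have been cleared away, a perceptron step (supported inside $\mathbb{Q}_1^{d-1}$, where only $\mu^N$ and $\nu$ remain) shrinks the convex hull of $\mu^N$ to an $\varepsilon$-ball, and then the first flow is \emph{reversed} to restore the old measures to their original positions. Pairwise non-colinearity is the only hypothesis needed, and it follows from the disjoint-supports inductive hypothesis inside $\mathbb{Q}_1^{d-1}$ --- no condition like ``$m^N$ is not in the span of the others'' is ever required. This is what makes the induction work for arbitrary $N$ (in particular $N\gg d$) and explains the $O(d)$ switches per step: $d-1$ sub-intervals to clear the old measures, $O(1)$ to cluster, $d-1$ to reverse. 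Your proposal would need to be rewritten around this ``move the old, freeze the new, then reverse'' structure; as written, the inductive step simply does not go through once the new moment falls in the span of the previous ones.
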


We defer the proof to {\bf \Cref{sec: proof.prop.separation}}.
\Cref{prop: separation} entails the existence of a continuous solution map $\Phi^T_{\theta}:\mathscr{P}(\S)\to\mathscr{P}(\S)$ 
which satisfies
\begin{equation*}
\conv_g\, \supp\,\Phi^T_\theta(\mu_0^i)\cap \conv_g\, \supp\,\Phi^T_\theta(\mu_0^j)=\varnothing
\end{equation*}
for all $i\neq j$.
This is of course totally equivalent to what is stated in \Cref{prop: separation}, but in subsequent arguments, referring directly to the flow map $\Phi^T_\theta$ instead of the parameters $\theta$ significantly eases the presentation, and we choose to do so.

\begin{figure}[!ht]
    \centering
    \begin{tikzpicture}[node distance=3cm, scale=0.85, every node/.style={transform shape}]

    \tikzset{
        section/.style={rectangle, minimum width=3cm, minimum height=1cm, text centered, draw=black, fill=#1},
        arrow/.style={thick, ->, >=Stealth}
    }

    \node (1) [section=vibrantgreen, align=center] {\hypersetup{linkcolor=white}\Cref{lem: first.quadrant}\color{white}:\\ \color{white}Transport to $\mathbb{Q}_1^{d-1}$};
    \node (2) [section=skyblue, above of=1, align=center] {\hypersetup{linkcolor=white}\Cref{lem: perturbation}\color{white}:\\ \color{white}Measures can be\\ \color{white}made ``non-colinear''};
    \node (3) [section=skyblue, right of=2, xshift=2cm, align=center] {\hypersetup{linkcolor=white}\Cref{lem: induction.barycenter}\color{white}:\\ \color{white}disentangle\\ \color{white}``non-colinear'' measures};
    \node (6) [section=brightyellow, below of=3, align=center] {\hypersetup{linkcolor=black}\Cref{prop: separation}:\\Disentanglement};

    \draw [arrow] (1) -- (2);
    \draw [arrow] (2) -- (3);
    \draw [arrow] (3) -- (6);

    \end{tikzpicture}
    \caption{High-level overview of the proof of \Cref{prop: separation}.}
    \label{fig: overview.1}
\end{figure}

\subsection{\texorpdfstring{Transportation to $\mathbb{Q}_1^{d-1}$}{Transportation to}}

Working with initial measures supported on $\mathbb{Q}_1^{d-1}$ is without loss of generality due to

\begin{lemma} \label{lem: first.quadrant}
Suppose $T>0$ and $\mu_0^i\in\mathscr{P}(\S)$ with $
    \bigcup_{i} \mathrm{supp}\,\mu^i_0\subsetneq\S.$
There exists a piecewise constant $\bW:[0,T]\to\mathscr{M}_{d\times d}(\R)$, having at most one switch and satisfying
\begin{equation*}
\|\bW\|_{L^\infty((0,T);\mathscr{M}_{d\times d}(\R))}\leq C/T
\end{equation*}
for some $C=C(N)>0$, such that for any $i\in\llbracket1,N\rrbracket$ the solution $\mu^i$ to \eqref{eq: cauchy.pb}--\eqref{eq: vf} with data $\mu_0^i$ and $\bV\equiv\bB\equiv\bU\equiv0$, $b\equiv{\bf 1}$, satisfies
$\mathrm{supp}\,\mu^i(T) \subset \mathbb{Q}_1^{d-1}$.
\end{lemma}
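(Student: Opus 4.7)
The plan is to reduce everything to a spherical gradient flow. Under the parameter choice $\bV\equiv\bB\equiv\bU\equiv0$ and $b\equiv\mathbf{1}$, the vector field \eqref{eq: vf} collapses to $\mathsf{v}[\mu](t,x)=\proj_x(\bW(t)\mathbf{1})$, since every coordinate of $\mathbf{1}$ is positive and hence $(\mathbf{1})_+=\mathbf{1}$. The dynamics become $\mu$-independent, and each characteristic obeys $\dot x(t)=\proj_{x(t)}(v(t))$ with $v(t)\coloneqq\bW(t)\mathbf{1}\in\R^d$. For a constant forcing $v=cw$ with $c>0$ and $w\in\S$, this is spherical gradient ascent of $x\mapsto c\langle x,w\rangle$; writing $\alpha(t)\coloneqq d_g(x(t),w)$, a direct computation gives $\dot\alpha=-c\sin\alpha$, whose explicit solution $\tan(\alpha(t)/2)=\tan(\alpha(0)/2)\,e^{-ct}$ shows exponential convergence to $w$ at rate $c$ from every initial point in $\S\setminus\{-w\}$.

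Since $\bigcup_i\supp\,\mu_0^i\subsetneq\S$ is closed, its complement is a nonempty open subset of $\S$, and because $d\ge 2$ we may select $w_0$ in this complement with the extra property $w_0\notin\{e^*,-e^*\}$, where $e^*\coloneqq\mathbf{1}/\sqrt d$. Set $r_0\coloneqq d_g(w_0,\bigcup_i\supp\,\mu_0^i)>0$ and fix radii $\eta_1,\eta_2>0$ small enough that $-e^*\notin B(-w_0,\eta_1)$ (possible since $w_0\neq e^*$) and $B(e^*,\eta_2)\subset\mathbb{Q}_1^{d-1}$ (possible since $e^*\in\mathrm{int}\,\mathbb{Q}_1^{d-1}$). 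I would then realize $\bW$ through two constant phases separated by a single switch at $t=T/2$. In Phase~1 ($t\in[0,T/2]$), take $\bW_1\coloneqq-(c_1/d)\,w_0\mathbf{1}^\top$, so that $\bW_1\mathbf{1}=-c_1w_0$ and $\|\bW_1\|_{\op}=c_1/\sqrt d$. Every support lies at geodesic distance at least $r_0$ from $w_0$, i.e., at distance at most $\pi-r_0$ from $-w_0$, so the estimate above with the choice $c_1\coloneqq(2/T)\log\bigl(\cot(r_0/2)/\tan(\eta_1/2)\bigr)$ ensures that $\supp\,\mu^i(T/2)\subset B(-w_0,\eta_1)$ for every $i$. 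In Phase~2 ($t\in[T/2,T]$), take $\bW_2\coloneqq(c_2/d)\,e^*\mathbf{1}^\top$, so that $\bW_2\mathbf{1}=c_2e^*$; since $B(-w_0,\eta_1)$ avoids $-e^*$ it lies in the basin of attraction of $e^*$, and repeating the argument with $c_2$ of the same order drives each support into $B(e^*,\eta_2)\subset\mathbb{Q}_1^{d-1}$ by time $T$.

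The resulting $\bW$ is piecewise constant with exactly one switch, and $\|\bW\|_{L^\infty((0,T);\mathscr{M}_{d\times d}(\R))}\lesssim\max(c_1,c_2)=C/T$, where the constant $C$ absorbs the data-dependent quantities $r_0$, $\eta_1$, $\eta_2$ (and the dimension $d$) into a single $C=C(N)$. The only mildly delicate point is the simultaneous choice of $w_0$ that avoids the supports \emph{and} the two distinguished poles $\pm e^*$, which is immediate in $d\ge 2$ by the openness of the complement; everything else reduces to elementary manipulations of the scalar ODE $\dot\alpha=-c\sin\alpha$.
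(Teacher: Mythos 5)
Your proof is correct and takes essentially the same two-phase route as the paper: pick a point $w_0$ (the paper's $\omega$) in the open complement of $\bigcup_i\supp\,\mu_0^i$, flow everything toward $-w_0$ to concentrate all supports in a small cap, then switch once and flow toward a target inside $\mathbb{Q}_1^{d-1}$. Your version adds the explicit scalar ODE $\dot\alpha=-c\sin\alpha$ with its half-angle solution, and fixes the second-phase attractor $e^*=\mathbf 1/\sqrt d$ in advance (requiring $w_0\neq\pm e^*$) whereas the paper chooses its $\alpha$ only after Phase~1, but these are cosmetic differences; the mechanism and the bound (one switch, norm $\lesssim 1/T$ up to a constant depending on the data's distance $r_0$ from $w_0$) coincide.
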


\begin{proof}
Pick $\omega\in\S\setminus\bigcup_{i}\supp\,\mu_0^i$ and choose $\bW_1{\bf 1}=-\omega$. The characteristics solve $\dot x(t)=-\proj_{x(t)}\omega$, thence $\dot{\langle x(t),\omega}\rangle=-1+\langle x(t),\omega\rangle^2<0$ off $\{\pm\omega\}$. It ensues that there is some $T_0>0$ such that $\supp\,\mu^i(T_0)\subset B(-\omega,\pi/8)$ for all $i$. Pick $\alpha\in\mathbb{Q}_1^{d-1}$ with $d_g(-\omega,-\alpha)>\pi/8$ so that $-\alpha\notin\supp\,\mu^i(T_0)$, and on $[T_0,T]$ take $\bW\equiv\bW_2$ with $\bW_2{\bf 1}=\alpha$. The characteristics are $\dot x(t)=\proj_{x(t)}\alpha$, so $\dot{\langle x(t),\alpha\rangle}=1-\langle x(t),\alpha\rangle^2\ge0$ drives every point into any prescribed small cap around $\alpha$. Choosing $T-T_0$ large  gives $\supp\,\mu^i(T)\subset\mathbb{Q}_1^{d-1}$ for all $i$. The parameter $\bW$ is piecewise constant with one switch; by time-rescaling the two phases, we deduce the bound.
\end{proof}

\subsection{A pair of lemmas}

The proof of \Cref{prop: separation} is based on the following lemmas.

\begin{lemma} \label{lem: induction.barycenter} 
Let $\mu_0^i\in\mathscr{P}(\Q_1^{d-1})$ be such that $\mathbb{E}_{\mu_0^i}[x] \text{ is not colinear with }\mathbb{E}_{\mu_0^j}[x]$ for $i\neq j$.
    Then for any $T>0$, $\varepsilon>0$, and $\nu_0\in \mathscr{P}(\Q_1^{d-1})$ such that $\mathbb{E}_{\nu_0}[x]$ is colinear with $\mathbb{E}_{\mu_0^N}[x]$, there exists a piecewise constant $\theta:[0,T]\to\Uptheta$ having at most $O(d\cdot N)$ switches such that 
    \begin{equation*}
       \conv_g\, \supp\,\nu(T)\cup  \conv_g\,\supp\,\mu^N(T)\subset B\left(\left.
\mathbb{E}_{\mu^j_0}[z]
\;\middle/\;
\left\|\mathbb{E}_{\mu^j_0}[z]\right\|\right.,\varepsilon\right)
    \end{equation*}
    and $\mu^i(T)=\mu^i_0$ for $i\neq j$, where $\mu^i, \nu$ 
    denote the unique solutions to \eqref{eq: cauchy.pb}--\eqref{eq: average.vf} corresponding to data $\mu_0^i$, $\nu_0$, and parameters $\theta$.
\end{lemma}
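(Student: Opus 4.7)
The plan is to induct on $N$ and to build $\theta$ as a concatenation of rank-one attention phases $\bV(t)=\alpha_k\alpha_k^\top$ complemented by one short perceptron phase per inductive level, keeping $\bB\equiv 0$ throughout. The key mechanism is that when $\alpha_k\perp\mathbb{E}_{\mu_0^i}[z]$, the attention velocity $\langle\mathbb{E}_{\mu^i(t)}[z],\alpha_k\rangle\,\proj_x\alpha_k$ vanishes identically on $\supp\mu^i$, so $\mu^i$ is pointwise stationary and, by continuation, the orthogonality is self-preserving along the flow. For the base case $N=1$ the non-colinearity hypothesis is vacuous; I take $\bV\equiv cc^\top$ with $c\coloneqq\mathbb{E}_{\mu_0^1}[z]/\|\mathbb{E}_{\mu_0^1}[z]\|\in\Q_1^{d-1}$, and since every support lies in the open hemisphere $\{\langle\cdot,c\rangle>0\}$ the scalar $\langle\mathbb{E}_{\mu(t)}[z],c\rangle$ stays strictly positive, so each characteristic converges exponentially to $+c$; a time-rescaling delivers the required concentration of both $\mu^1$ and $\nu$ at horizon $T$ with $O(1)$ switches.

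For the inductive step I set $E\coloneqq\mathrm{span}\{\mathbb{E}_{\mu_0^i}[z]:i\leq N-1\}$. In the generic case $\mathbb{E}_{\mu_0^N}[z]\notin E$, I fix an orthonormal basis $\{\alpha_k\}$ of $E^\perp$ and pick $\alpha_\ell$ with $\langle\mathbb{E}_{\mu_0^N}[z],\alpha_\ell\rangle\ne 0$. Setting $\bV\equiv\alpha_\ell\alpha_\ell^\top$ on $[0,T/2]$ keeps all $\mu^i$ ($i<N$) exactly stationary, while $\mu^N$ and $\nu$ follow a sign-preserving one-dimensional flow along $\alpha_\ell$---by the elementary calculation recalled in the $N=2$ sketch of the introduction---and both cluster exponentially to the same $+\alpha_\ell$ or $-\alpha_\ell$. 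On $[T/2,T]$ I turn attention off ($\bV\equiv 0$) and use a short sequence of perceptron switches to rotate the cluster onto $\mathbb{E}_{\mu_0^j}[z]/\|\mathbb{E}_{\mu_0^j}[z]\|$: since $\pm\alpha_\ell$ lies outside $\overline{\Q_1^{d-1}}$ while every $\supp\mu_0^i$ ($i<N$) lies strictly inside it, a hyperplane separates them and $\bU,b$ can be chosen so that $(\bU x+b)_+$ is supported strictly away from $\bigcup_{i<N}\supp\mu_0^i$, letting the frozen measures see zero velocity throughout. Each inductive level consumes $O(d)$ switches.

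The main obstacle, as I anticipate it, is the degenerate subcase $\mathbb{E}_{\mu_0^N}[z]\in E$---possible when $N>d$ or when the $\mathbb{E}_{\mu_0^i}[z]$ are pairwise non-colinear but linearly dependent---since then no $\alpha\in E^\perp$ has nontrivial inner product with $\mathbb{E}_{\mu_0^N}[z]$. I would handle it by a sandwich: first apply a short attention phase aimed at some $\mathbb{E}_{\mu_0^k}[z]$ (with $k<N$) that appears with nonzero coefficient in the expansion of $\mathbb{E}_{\mu_0^N}[z]$ over $E$, so as to nudge $\mu^N$ to a configuration whose barycenter acquires an $E^\perp$-component; then apply the generic case; then undo the nudge by time-reversal on supports that have by then become disjoint. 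Making the ``undo'' respect the \emph{exact} invariance $\mu^i(T)=\mu_0^i$ for $i\neq j$ (rather than a merely approximate version) is what forces invoking the inductive hypothesis at $N-1$, and the bookkeeping across the sandwich is what drives the $O(d\cdot N)$ switch bound. A secondary subtlety is landing the cluster precisely on $\mathbb{E}_{\mu_0^j}[z]/\|\mathbb{E}_{\mu_0^j}[z]\|$ rather than on a generic $\pm\alpha_\ell$, which is what makes the localized perceptron rotation in the final phase essential and the separation-from-$\Q_1^{d-1}$ geometry crucial.
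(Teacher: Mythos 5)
Your proposal takes the \emph{opposite} direction to the paper's, and that choice introduces a gap that is not repaired by your ``sandwich.''

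You freeze $\mu^i$ for $i<N$ by choosing $\bV=\alpha_\ell\alpha_\ell^\top$ with $\alpha_\ell\in E^\perp$, $E:=\mathrm{span}\{\mathbb{E}_{\mu_0^i}[z]:i<N\}$, and you move $\mu^N,\nu$. The first problem is that $E^\perp$ can be trivial: pairwise non\nobreakdash-colinearity of the barycenters does not prevent linear dependence, and for $N>d$ one generically has $E=\R^d$, so there is \emph{no} $\alpha_\ell$ at all. You flag this as the ``degenerate subcase,'' but your patch is a sketch, and the unwinding you describe would have to keep the later measures frozen during both the nudge and its reversal, which is not established. The paper avoids this issue structurally: it takes $\{\alpha_k\}$ to be an orthonormal basis of $\big(\mathrm{span}\{\mathbb{E}_{\mu_0^N}[z]\}\big)^\perp$, a space that is \emph{always} $(d-1)$\nobreakdash-dimensional. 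With that choice, $\mu^N$ and $\nu$ are the ones that are exactly frozen throughout (their barycenters are orthogonal to every $\alpha_k$), while each $\mu^i$ with $i<N$, whose barycenter is not colinear with $\mathbb{E}_{\mu_0^N}[z]$, necessarily has a nonzero component along some $\alpha_k$ and is therefore pushed to a small ball around $\pm\alpha_k$, i.e., \emph{out of} $\Q_1^{d-1}$. The paper then clusters $\mu^N,\nu$ via a single perceptron gate that is ``on'' in $\Q_1^{d-1}$ and ``off'' near $\pm\alpha_k$, and finally returns the evacuated measures by flow reversal $\Psi_1^{-1}$, absorbing the resulting dilation into a smaller $\delta$ in Step~2.

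The second problem is independent of the degenerate case and hits your generic case too. In your final perceptron phase you must move the cluster from $\pm\alpha_\ell$ (outside $\Q_1^{d-1}$) to the target $\mathbb{E}_{\mu_0^N}[z]/\|\mathbb{E}_{\mu_0^N}[z]\|$, which lies \emph{inside} $\Q_1^{d-1}$, while keeping $\bigcup_{i<N}\supp\mu_0^i\subset\Q_1^{d-1}$ fixed. The lemma's hypotheses constrain only the barycenters; the supports may overlap arbitrarily — indeed, enabling disentanglement of overlapping supports is the whole point of this lemma. In particular the target point can lie in the interior of some $\supp\mu_0^i$ with $i<N$, in which case no ReLU gate (or finite sequence of gates) can transport the cluster there while leaving $\mu_0^i$ untouched: the gate must be active at the cluster's terminal position, which is precisely where $\mu_0^i$ has mass. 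The paper's ordering — evacuate the others from $\Q_1^{d-1}$ first, cluster $\mu^N,\nu$ second, reverse the flow third — is exactly what makes the perceptron gate argument go through, because at the time the gate is used the only measures in $\Q_1^{d-1}$ are $\mu^N$ and $\nu$.

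Your calculation that a single rank-one attention block $\alpha\alpha^\top$ preserves the sign of $\langle\mathbb{E}_{\mu(t)}[z],\alpha\rangle$ and drives characteristics to $\pm\alpha$ is correct and is the same mechanism as the paper's; the error is in which measures you move and which you freeze, and in the geometry of the final return leg.
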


We postpone the proof to {\bf \Cref{proof: induction.barycenter}}.

\begin{lemma}\label{lem: perturbation} \label{lem: colinearity}
Let $T>0$ and let $\mu_0,\nu_0\in\mathscr{P}(\Q_1^{d-1})$ be two different measures such that $\mathbb{E}_{\mu_0}[x]= \gamma_1 \mathbb{E}_{\nu_0}[x]$ for some $\gamma_1\in(0,1]$.
\begin{enumerate}
    \item If $\gamma_1=1$, then, setting $\bV\equiv0$, there exist $\bW,\bU\in\mathscr{M}_{d\times d}(\R)$ and $b\in \mathbb{R}^d$ such that the solutions $\mu, \nu$ to \eqref{eq: cauchy.pb}--\eqref{eq: average.vf} corresponding to $\mu_0, \nu_0$ and these parameters, satisfy
\begin{equation*}
    \mathbb{E}_{\mu(T)}[x]\neq \mathbb{E}_{\nu(T)}[x].
\end{equation*}
Moreover the Lipschitz-continuous and invertible flow map $\Phi^T:\S\to\S$ induced by the characteristics of \eqref{eq: cauchy.pb}--\eqref{eq: average.vf} with these parameters satisfies
\begin{equation}\label{eq: identity.flow}
    \Phi^T(x)=x\hspace{1cm}\text{ for }x\in \S\setminus\left( \conv_g\,\supp\,\mu_0\cup{ \conv_g}\,\supp\,\nu_0\right).
\end{equation}
\item If $\gamma_1\neq 1$, then, setting $\bB\equiv0$, there exist $(\bV,\bW,\bU)\in L^\infty((0,T);\mathscr{M}_{d\times d}(\mathbb{R})^3)$ and $b\in L^\infty((0,T);\mathbb{R}^d)$,  piecewise constant with at most $2$ switches, such that the solutions $\mu, \nu$ to \eqref{eq: cauchy.pb}--\eqref{eq: average.vf} corresponding to data $\mu_0, \nu_0$ and these parameters satisfy
\begin{equation*}
    \mathbb{E}_{\mu(T)}[x]\neq\gamma_2\mathbb{E}_{\nu(T)}[x]
\end{equation*}
for all $\gamma_2\in\mathbb{R}$.
\end{enumerate}
\end{lemma}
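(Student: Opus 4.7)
The plan is to exploit $\mu_0 \neq \nu_0$ by engineering a vector field whose effect on the pair of barycenters breaks the colinearity assumed at $t=0$. The case split reflects the fact that, with $\bB \equiv 0$, the self-attention term reduces to $\bV\mathbb{E}_{\mu}[x]$ and therefore cannot distinguish measures with identical barycenters. In Part 1 ($\gamma_1 = 1$) the perceptron must do all the work; in Part 2 ($\gamma_1 \in (0,1)$) the mismatch in barycenter magnitudes can already be amplified by attention.

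\textbf{Part 1.} Setting $\bV \equiv 0$ removes the $\mu$-dependence from the vector field $\mathsf{v}(x) = \proj_x \bW(\bU x + b)_+$, so $\mu$ and $\nu$ are transported by the same characteristic flow map $\Phi^t\colon\S\to\S$ and
\[
\mathbb{E}_{\mu(T)}[x]-\mathbb{E}_{\nu(T)}[x]=\int\Phi^T\diff(\mu_0-\nu_0).
\]
I would pick $(u,c)\in\R^d\times\R$ such that the ReLU feature $h(x)\coloneqq(\langle u,x\rangle+c)_+$ satisfies $\int h\,\diff(\mu_0-\nu_0)\neq 0$; such a pair exists because $\mu_0-\nu_0$ is a nonzero signed measure and the span of affine-ReLU features is dense in $\mathscr{C}(\S)$. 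Implementing $h$ via rank-one $\bU=e_1 u^\top$, $b=c e_1$ and setting $\bW=v e_1^\top$, the characteristic ODE becomes $\dot x=h(x)\proj_x v$, and a first-order expansion at $t=0$ gives
\[
\mathbb{E}_{\mu(T)}[x]-\mathbb{E}_{\nu(T)}[x]=T\int h(x)\proj_x v\,\diff(\mu_0-\nu_0)(x)+O(T^2),
\]
which is rendered nonzero by a suitable $v\in\R^d$ (a time rescaling then reaches arbitrary $T$). For \eqref{eq: identity.flow}, I would further restrict $(u,c)$ so the activation cap $\{\langle u,\cdot\rangle+c>0\}\cap\S$ lies inside $\conv_g\,\supp\,\mu_0\cup\conv_g\,\supp\,\nu_0$; this is possible precisely because $\mu_0-\nu_0$ must already be nonzero when restricted to that union.

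\textbf{Part 2.} Write $\mathbb{E}_{\mu_0}[x]=\alpha\xi$ and $\mathbb{E}_{\nu_0}[x]=\beta\xi$ with $\alpha\neq\beta$ and $\xi\in\S$, and pick $\bV=\eta\xi^\top$ for some $\eta\in\xi^\perp$ to be chosen, using only attention in a first phase. Differentiating the barycenters at $t=0$ yields
\begin{equation*}
\left.\frac{d}{dt}\right|_{0}\mathbb{E}_{\mu(t)}[x]=\alpha\bigl(I-\mathbb{E}_{\mu_0}[xx^\top]\bigr)\eta,\qquad \left.\frac{d}{dt}\right|_{0}\mathbb{E}_{\nu(t)}[x]=\beta\bigl(I-\mathbb{E}_{\nu_0}[xx^\top]\bigr)\eta,
\end{equation*}
whose $\xi^\perp$-components are $-\alpha(\mathbb{E}_{\mu_0}[xx^\top]\eta)_\perp$ and $-\beta(\mathbb{E}_{\nu_0}[xx^\top]\eta)_\perp$ respectively. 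Comparing the ratios of transverse to parallel components of the barycenters at time $T$, colinearity fails for small $T$ whenever these transverse parts do not sit in the exact proportion forced by $\alpha$ and $\beta$—which holds for generic $\eta$ as soon as $\mathbb{E}_{\mu_0}[xx^\top]\neq\mathbb{E}_{\nu_0}[xx^\top]$. In this non-degenerate regime a single switch suffices.

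\textbf{Main obstacle.} The hard part will be the degenerate sub-case of Part 2 where the second moments conspire against the mechanism above, for instance when $\mathbb{E}_{\mu_0}[xx^\top]=\mathbb{E}_{\nu_0}[xx^\top]$. I plan to handle this by first running a perceptron phase in the style of Part 1, tuned so as to differentiate the second moments of the evolved measures, and only then switching to the attention phase described above. Because $\mu_0\neq\nu_0$, the same density argument furnishes a perceptron whose short-time effect breaks the second-moment symmetry, and the budget of two switches in the statement accommodates exactly this two-step strategy. The division of labour—perceptron to break symmetries of the measures, attention to convert magnitude gaps into transverse motion—is consistent with the broader philosophy of the paper.
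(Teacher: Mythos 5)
Your Part 1 is a legitimate alternative to the paper's argument. The paper selects an open ball $\mathscr{B}$ with $\mu_0(\mathscr{B})\neq\nu_0(\mathscr{B})$, then clusters mass to a cherry-picked $x^*\in\mathscr{B}$ via \Cref{lem: two.balls}; you instead invoke density of affine-ReLU features in $\mathscr{C}(\S)$ to produce a detector $h$ with $\int h\,\diff(\mu_0-\nu_0)\neq 0$ and work to first order in time. Both routes are sound, and both ultimately hinge on $\mu_0-\nu_0\neq 0$. One caveat you should confirm: to get \eqref{eq: identity.flow} you restrict the activation cap to lie inside $\conv_g\supp\mu_0\cup\conv_g\supp\nu_0$, but you also need that cap to simultaneously detect $\mu_0-\nu_0$; this requires a short argument that localized ReLU differences near an interior point of the support still form a detecting family. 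That is plausible but not automatic, and the paper's ball-based construction faces the same issue.

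Part 2 has a genuine gap, and it is precisely where your approach diverges from the paper's. The hypothesis $\gamma_1\in(0,1)$ is the key datum of Part 2, and the paper exploits it directly: running pure attention ($\bV\equiv I_d$, $\bB\equiv 0$) makes the barycenters act as drift vectors of \emph{different magnitudes}, so the two measures contract at different speeds; after a short time $T_*$ one support is strictly contained in the other, producing a ball meeting one support and not the other, whereupon a perceptron as in Part 1 moves mass asymmetrically and breaks colinearity. Your proposal never uses $\gamma_1\neq 1$ in an essential way: the attention-phase first-order computation you sketch produces noncolinearity only when the transverse part of $(\mathbb{E}_{\mu_0}[xx^\top]-\mathbb{E}_{\nu_0}[xx^\top])\eta$ is nonzero for some $\eta\in\xi^\perp$ — a stronger condition than $\mathbb{E}_{\mu_0}[xx^\top]\neq\mathbb{E}_{\nu_0}[xx^\top]$, and one that can fail even when the second moments differ (e.g., when the difference lives entirely in $\xi\xi^\top$). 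You delegate the degenerate case to "run a perceptron to break second-moment symmetry," but this is asserted, not proved: a perceptron applies the same characteristic flow to both measures, so you would need to show that for some ReLU choice, $\int\Phi^T(x)\Phi^T(x)^\top\diff(\mu_0-\nu_0)$ acquires a transverse component relative to the \emph{new} barycenter direction, and this is not a direct consequence of the density argument used in Part 1, since both barycenters and second moments evolve simultaneously. Minor but worth flagging: your displayed $\xi^\perp$-components drop the leading $\eta$ term of $(I-\mathbb{E}_{\mu_0}[xx^\top])\eta$, though the ultimate degeneracy condition you allude to is roughly correct after fixing that. To close the gap I would recommend abandoning the second-moment route and instead using the speed-difference mechanism the paper uses: it is exactly tailored to $\gamma_1\neq 1$, it produces a strict inclusion $\conv_g\supp\nu(T_*)\subsetneq\conv_g\supp\mu(T_*)$ in one attention phase, and then the second phase (perceptron on a ball seeing only one measure) finishes in the same style as Part 1.
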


We postpone the proof to {\bf\Cref{proof: perturbation}}.

\subsection{\texorpdfstring{Proof of \Cref{prop: separation}}{Proof of Proposition}} \label{sec: proof.prop.separation}

\begin{proof}[Proof of \Cref{prop: separation}]

We argue by induction over $N$. The base case $N=1$ is trivially satisfied. Assume 
\begin{equation}\label{eq: empty.inter.sup}
\conv_g\,\supp\,\mu^i_0\cap\conv_g\,\supp\,\mu^j_0=\varnothing \hspace{1cm} \text{ for } i\neq j\in\llbracket1,N-1\rrbracket,
\end{equation}
and let $\mu_0^N\in \mathscr{P}(\Q_1^{d-1})$ be arbitrary. 
We prove there exist $\theta$ as in the statement with
\begin{equation*}
 \conv_g\,\supp\,\mu^i(T)\cap  \conv_g\,\supp\,\mu^j(T)=\varnothing\hspace{1cm} \text{ for } i\neq j\in\llbracket1,N\rrbracket.
\end{equation*}
Since $\supp\,\mu^i_0\subset\Q_1^{d-1}$, \eqref{eq: empty.inter.sup} implies that
\begin{equation*}
    \mathbb{E}_{\mu^i_0}[x]\text{ is not colinear with }\mathbb{E}_{\mu^j_0}[x] \hspace{1cm} \text{ for } i\neq j\in\llbracket1,N-1\rrbracket.
\end{equation*}
Now if $\mathbb{E}_{\mu_0^N}[x]$ is not colinear with $\mathbb{E}_{\mu_0^i}[x]$ for all $i\in\llbracket1,N-1\rrbracket$, one can conclude by a simple application of \Cref{lem: induction.barycenter}, by choosing $\varepsilon$ small enough and considering only the measure $\mu^N$, that the diameter of the convex hull is shrunk until achieving the separation. 
On another hand, as a consequence of \eqref{eq: empty.inter.sup}, $\mathbb{E}_{\mu^N_0}[x]$ is colinear with $\mathbb{E}_{\mu^i_0}[x]$ for at most one $i\in\llbracket1,N-1\rrbracket$. 
Suppose that this is the case, and without loss of generality, we label this index $i=N-1$. We now proceed as follows.
\begin{enumerate}
    \item In $\left[0,T/4\right]$, we apply \Cref{lem: induction.barycenter}, with $\varepsilon>0$ small enough, to guarantee the existence of piecewise constant $\theta_1\in L^\infty((0,T/4); \Uptheta)$ having $O(d\cdot N)$ switches, such that the solution to \eqref{eq: cauchy.pb} satisfies
\begin{align}
    \conv_g\,\supp\,\mu^j\left(T/4\right)\cap  \conv_g\,\supp\,\mu^N\left(T/4\right)=\varnothing\label{eq: first.step}\\
     \conv_g\,\supp\,\mu^j\left(T/4\right)\cap  \conv_g\,\supp\,\mu^{N-1}\left(T/4\right)=\varnothing\nonumber
\end{align}
  for all $j\in\llbracket1,N-2\rrbracket$.
\item In $\left[T/4,T/2\right]$, we apply the first part of \Cref{lem: perturbation} to find constant $\theta_2$ such that 
\begin{equation*}
    \mathbb{E}_{\mu^{N-1}\left(\frac{T}{2}\right)}[x]\neq \mathbb{E}_{\mu^{N}\left(\frac{T}{2}\right)}[x],
\end{equation*}
whereas, thanks to \eqref{eq: identity.flow} and the Lipschitz character of the ODE,
\begin{equation*}
     \conv_g\,\supp\,\mu^j\left(T/2\right)\cap  \conv_g\,\supp\,\mu^{N-1}\left(T/2\right)=\varnothing
\end{equation*}
for all $j\in\llbracket1,N-2\rrbracket$.
\item 
In $[T/2,3T/4]$, we apply the second part of \Cref{lem: colinearity} to $\mu^{N-1}\left(T/2\right)$ and $\mu^{N}\left(T/2\right)$ so that there are some piecewise constant $\theta_3\in L^\infty((T/2,3T/4);\Uptheta)$ such that 
\begin{equation*}
    \mathbb{E}_{\mu^N\left(\frac{3T}{4}\right)}[x] \text{ is not colinear with } \mathbb{E}_{\mu^{N-1}\left(\frac{3T}{4}\right)}[x].
\end{equation*}
Furthermore, owing to \eqref{eq: first.step}, and noting that $\bV=I_d$ in \Cref{lem: perturbation}, along with  the fact that $\conv_g\,\supp(\mu(t))\subset \conv_g\,\supp(\mu_0)$, 
we also have  
\begin{equation*}
    \conv_g\,\supp\,\mu^i\left(3T/4\right)\cap \conv_g\,\supp\mu^j\left(3T/4\right)=\varnothing
\end{equation*}
for all $i\neq j\in\llbracket1,N-1\rrbracket$, and for all $i\in\llbracket1,N-2\rrbracket$ and $j=N$. 
\item The assumption of \Cref{lem: induction.barycenter} is now fulfilled by all $N$ measures, so by picking $\varepsilon>0$ small enough and applying \Cref{lem: induction.barycenter} in $\left[3T/4,T\right]$, the conclusion follows.\qedhere
\end{enumerate}
\end{proof}

\section{Matching discrete measures} \label{sec: neural.ode}

The goal of this section is to prove the following result.

\begin{proposition} \label{prop: interpolation.neural.ode}
Suppose $d\geq3$. Consider 
\begin{equation*} \tag{$\mathscr{D}$}
  (x_0^i,y^i)\in\S\times\S \hspace{1cm} \text{ for } i\in\llbracket1,M\rrbracket, 
\end{equation*}
with $x_0^i\neq x_0^j$ and $y^i\neq y^j$ for $i\neq j$, and suppose that for any $i$ there exist $\gamma_i\in\S$ and $\varepsilon_i>0$ such that $\langle\gamma_i,x_0^i-y^i\rangle=0$ and  $x_0^j\notin H_{\varepsilon_i}^{\gamma_i}$
for $j\neq i$, where
\begin{equation*}
H_{\varepsilon_i}^{\gamma_i}\coloneqq\{x\in \S\colon|\langle x,\gamma_i\rangle|\leq \varepsilon_i\}.
\end{equation*}
For any $T>0$ there exist piecewise constant $\theta=(\bW,\bU,b):[0,T]\to\mathscr{M}_{d\times d}(\R)^2\times\R^d$, having at most $6M$ switches, such that for any $i$, the solution $x^i(\cdot)\in \mathscr{C}^0([0,T];\S)$ to
\begin{equation} \label{eq: neural.ode.sphere}
\begin{cases}
\dot{x}^i(t)=\proj_x\bW(t)(\bU(t)x^i(t)+b(t))_+ &\text{ in } [0, T]\\
x^i(0)=x_0^i,
\end{cases}
\end{equation}
satisfies $x^i(T)=y^i.$
Moreover, there exists $C>0$, not depending on $\mathscr{D}$ nor $T$, such that
\begin{equation*}
\|\theta\|_{L^\infty((0,T);\Uptheta)}\leq \frac{C\cdot M}{\displaystyle T\min_{i}\varepsilon_{i}}.
\end{equation*}
\end{proposition}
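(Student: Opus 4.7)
The plan is to process the $M$ pairs one at a time on successive subintervals of $[0,T]$, each of length $T/M$, using constant parameters on each, and to combine at most a handful of such phases per pair to reach the advertised $6M$ switch budget. On the $i$-th subinterval I would build a vector field that moves $x^i$ from $x_0^i$ all the way to $y^i$ while pinning the other trajectories.

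For a fixed $i$, set $c_i\coloneqq\langle\gamma_i,x_0^i\rangle$; by hypothesis $\langle\gamma_i,y^i\rangle=c_i$ and $|\langle\gamma_i,x_0^j\rangle|>\varepsilon_i$ for $j\neq i$. In the relevant regime $|c_i|<\varepsilon_i$, for any $\delta_i\in(0,\varepsilon_i-|c_i|)$ the spherical slab $S_i\coloneqq\{x\in\S:|\langle\gamma_i,x\rangle-c_i|<\delta_i\}$ contains both $x_0^i$ and $y^i$ but none of the other $x_0^j$'s. I would design the vector field in the form $\mathsf{v}_i(x)=\phi_i(x)\,\proj_x v_i$, with $\phi_i$ a trapezoidal bump in the $\gamma_i$-direction supported on $S_i$, e.g.
\[
\phi_i(x)=(\langle\gamma_i,x\rangle-a_i)_+-(\langle\gamma_i,x\rangle-b_i)_+-(\langle\gamma_i,x\rangle-b_i')_++(\langle\gamma_i,x\rangle-a_i')_+,
\]
with $a_i<b_i<b_i'<a_i'$ straddling $c_i$ inside $S_i$ and $b_i-a_i=a_i'-b_i'$. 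This is realised by four ReLU units (rows of $\bU_i$ proportional to $\gamma_i^\top$, with biases absorbed in $b_i$) combined through a rank-one $\bW_i=v_i\mathbf{w}_i^\top$, so that $\bW_i(\bU_ix+b_i)_+=\phi_i(x)\,v_i$. The velocity $v_i\in\mathrm{span}\{x_0^i,y^i\}$ is chosen so that the gradient flow of $\langle v_i,\cdot\rangle$ on $\S$ passes through $y^i$ along the geodesic from $x_0^i$, and $\|v_i\|$ is rescaled to make the arrival time equal $T/M$.

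For the bookkeeping: to squeeze the trapezoid into a slab of width $\delta_i=\Theta(\varepsilon_i)$ the ReLU slopes must be $\Theta(1/\varepsilon_i)$, and $\|v_i\|$ must be $\Theta(M/T)$ to complete the motion within the subinterval. This gives $\|\theta\|_{L^\infty((0,T);\Uptheta)}=O(M/(T\min_i\varepsilon_i))$, as required. At most $4$ parameter changes are internal to the trapezoid-plus-velocity construction, and up to $2$ extra sub-step phases are used when the geodesic arc from $x_0^i$ to $y^i$ must be subdivided to stay inside $S_i$, for a total of at most $6$ switches per pair.

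The main obstacle is to ensure that (i) the trajectory of $x^i$ stays inside $S_i$ throughout its motion, and (ii) the targets $y^j$ for $j<i$ already reached do not re-enter a later slab $S_i$, since the hypothesis only constrains the $x_0^j$ to lie outside $S_i$. For (i) I would exploit the geometric compatibility between $\gamma_i$ and $\mathrm{span}\{x_0^i,y^i\}$—which keeps the geodesic close to the level set $\{\langle\gamma_i,\cdot\rangle=c_i\}$—and use thinner sub-slabs together with sub-geodesic steps when the arc is too long. For (ii) I would sweep the pairs in a careful order and, in borderline cases, take advantage of the many free directions available when $d\geq 3$ to shift $S_i$ slightly away from the previously placed targets, all of which fits within the $6M$ switch budget.
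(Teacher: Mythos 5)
Your trapezoidal-slab construction is a genuinely different route from the paper's, which uses a \emph{gather-corridor-restore} scheme: first an anchor point $\omega$ orthogonal to $\gamma_i$ and far (distance $\ge\pi/2$) from both $x_0^i$ and $y^i$ is fixed, then two ReLU gates sweep every inactive point into a small cap $B(\omega,3\pi/16)$ while fixing the slab $H_\varepsilon^{\gamma_i}$, then a third gate (vanishing exactly on that cap) pushes $x_0^i$ to $y^i$ along a two-leg geodesic corridor that avoids the cap, and finally the gather is reversed to restore the inactive points. The key payoff of that design is that the corridor only has to \emph{avoid one small cap}, which can always be arranged when $d\ge3$ by the choice of $\omega$; the corridor does not have to remain inside a narrow slab.

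Your issue (i) is a real gap, not merely a technicality. The hypothesis gives $\langle\gamma_i,x_0^i\rangle=\langle\gamma_i,y^i\rangle\eqqcolon c_i$, but the linear functional $\langle\gamma_i,\cdot\rangle$ restricted to the great circle through $x_0^i,y^i$ is a sinusoid $R\cos(\theta-\phi_0)$ with $R=\|\operatorname{proj}_{\operatorname{span}\{x_0^i,y^i\}}\gamma_i\|\ge|c_i|$, and the minimizing geodesic from $x_0^i$ to $y^i$ necessarily passes through the extremum $R$ (the midpoint angle of the arc is the argmax). So unless $\gamma_i$ happens to be orthogonal to $\operatorname{span}\{x_0^i,y^i\}$ — i.e.\ $c_i=0$, which is \emph{strictly stronger} than what is assumed — the geodesic leaves any narrow slab around $c_i$, and in fact it reaches level $R$, which can exceed $\varepsilon_i$. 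Splitting into thinner sub-slabs does not help: the sub-slabs must collectively cover $[c_i,R]$, and the hypothesis only rules out the other $x_0^j$ from $H_{\varepsilon_i}^{\gamma_i}$, so some $x_0^j$ with $|\langle\gamma_i,x_0^j\rangle|\in(\varepsilon_i,R)$ may land inside one of your sub-slabs. The paper's strategy sidesteps this entirely because after the gather phase the inactive particles are physically located inside a cap on which the corridor gate is identically zero, and the restore phase is an exact time-reversal of the gather, so nothing depends on the corridor staying in any slab. Your issue (ii) — the moved targets $y^j$ possibly re-entering later slabs — is likewise handled in the paper by stating the induction lemma with the inactive points already at their targets and outside $H_\varepsilon^{\gamma}$, and then undoing the gather; your proposed reordering/perturbation would also change the effective $\varepsilon_i$ and therefore the $C M / (T\min_i\varepsilon_i)$ bound, which your sketch does not track.
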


The proof of \Cref{prop: interpolation.neural.ode} follows directly from the following result, combined with a straightforward induction argument. 

\begin{proposition} \label{lem: induction.neural.ode}
Suppose $d\geq3$. Consider 
\begin{equation*} \tag{$\mathscr{D}$}
  (x_0^i,y^i)\in\S\times\S \hspace{1cm} \text{ for } i\in\llbracket1,M\rrbracket, 
\end{equation*}
with $x_0^i\neq x_0^j$ and $y^i\neq y^j$ for $i\neq j$, with $x_0^i=y^i$ for $i\in\llbracket1,M-1\rrbracket$, and suppose that there exist $\gamma\in\S$ and $\varepsilon>0$ such that $\langle\gamma,x_0^M-y^M\rangle=0$  and $x_0^i\notin H_{\varepsilon}^{\gamma}$
for all $i\in\llbracket1,M-1\rrbracket$. 

For any $T>0$ there exist piecewise constant $\theta=(\bW,\bU,b):[0,T]\to\mathscr{M}_{d\times d}(\R)^2\times\R^d$, having at most $6$ switches, such that for any $i$, the solution $x^i(\cdot)$ to 
\eqref{eq: neural.ode.sphere} satisfies $x^i(T)=y^i.$
Moreover, there exists $C>0$, not depending on $\mathscr{D}$ and $T$, such that
    \begin{equation*}
        \|\theta\|_{L^\infty((0,T); \Uptheta)}\leq \frac{C}{T\cdot\varepsilon}.
    \end{equation*}
\end{proposition}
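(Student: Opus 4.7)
The plan is to design a single ReLU layer whose resulting vector field is a \emph{bump} supported in a thin slab around the hyperplane $\{x\colon \langle \gamma, x\rangle = 0\}$---a slab containing $y^M$ and (after a preparatory shift if needed) also $x_0^M$, while avoiding every point $x_0^i$ with $i<M$. The flow of this vector field then moves $x_0^M$ along a great-circle arc to $y^M$ in prescribed time $T$, and leaves the other points invariant by construction, which is all that is asked.

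First, I would implement the bump via the standard ReLU identity
\[
(t+\varepsilon)_+ - 2(t)_+ + (t-\varepsilon)_+ \;=\; \varepsilon\, \Lambda(t/\varepsilon),\qquad \Lambda(s) = (1-|s|)_+ .
\]
Taking $\bU$ with its first three rows equal to $\gamma^\top$, $b=(\varepsilon,0,-\varepsilon,0,\ldots)^\top$, and $\bW$ whose first three columns are $w,-2w,w$ for a chosen direction $w\in\R^d$, the composition yields $\bW(\bU x+b)_+ = \varepsilon\, w\, \Lambda(\langle\gamma,x\rangle/\varepsilon)$. Since $\Lambda$ is supported on $[-1,1]$, this vanishes outside $H_\varepsilon^\gamma$, and the associated characteristic $\dot x = \proj_x(\varepsilon\, w\, \Lambda(\langle \gamma, x\rangle/\varepsilon))$ keeps every $x_0^i$, $i<M$, fixed.

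Next, I would choose the direction $w$ to steer $x_0^M$ to $y^M$. Since $\langle\gamma,x_0^M-y^M\rangle=0$, both points lie on a common level set $\{\langle\gamma,\cdot\rangle = c\}$ with $c=\langle\gamma,x_0^M\rangle$. In the case $c=0$---which can be arranged in the application of the lemma by picking $\gamma$ additionally perpendicular to $x_0^M$, exploiting $d\geq 3$---taking $w\in\mathrm{span}\{x_0^M,y^M\}$ with $w\perp\gamma$ guarantees the flow stays on the equator $\{\langle\gamma,\cdot\rangle=0\}$, tracing the great circle through $x_0^M$ and $y^M$. A routine integration of the scalar ODE $\dot\theta = \varepsilon\|w\|\sin(\beta-\theta)$---where $\theta$ is the angle travelled from $x_0^M$ and $\beta$ is the angle from $x_0^M$ to $w/\|w\|$---shows the flow passes through $y^M$ at a finite time $T'\asymp 1/(\varepsilon\|w\|)$ as long as $\alpha:=\angle(x_0^M,y^M)<\beta<\pi$; choosing $\|w\|\asymp 1/(T\varepsilon)$ then gives $T'=T$ and the claimed parameter bound.

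The main obstacle is twofold. When $\alpha\ge\pi/2$ a single phase cannot reach $y^M$ (the flow only converges to its asymptotic pole), and when $c\neq 0$ the bump does \emph{not} preserve the level set, so the $\gamma$-component of $x(t)$ drifts. I would handle both at once by splitting into up to three rotational phases, each covering an angle strictly below $\pi/2$, sandwiched between at most two preparatory/corrective phases built from the same bump template but with a small $\gamma$-component added to $w$: the first carries $x_0^M$ from level $c$ onto the equator, the last returns it to level $c$ on top of $y^M$. This accounts for at most six piecewise-constant parameter configurations, hence six switches. Since the only nonzero entries in $(\bW,\bU,b)$ scale as $\|w\|$, $1$, or $\varepsilon$, the global estimate $\|\theta\|_{L^\infty((0,T);\Uptheta)}\leq C/(T\varepsilon)$ is immediate.
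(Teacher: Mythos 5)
Your plan—build a hat-function bump supported on the slab $H_\varepsilon^\gamma$ and steer $x_0^M$ along a great circle inside that slab—is genuinely different from the paper's construction. The paper does the opposite: rather than confining the \emph{active} point to a thin band, it uses one-sided ReLU gates $(\pm\langle\gamma,x\rangle-\epsilon/2)_+$ to \emph{park all inactive points} in a small cap $B(\omega,3\pi/16)$ around a carefully chosen anchor $\omega\perp\gamma$ far from both $x_0^M$ and $y^M$, then drives $x_0^M$ to $y^M$ through the wide region $\S\setminus B(\omega,3\pi/16)$ via two intermediate drift targets on a geodesic through $y^M$, and finally undoes the parking. That gives far more geometric slack than your thin-slab corridor, which is why the rest of the paper's argument closes cleanly.

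There is, however, a genuine gap in the proposal. The hypothesis only says $\langle\gamma,x_0^M-y^M\rangle=0$; it does \emph{not} imply $|\langle\gamma,x_0^M\rangle|\leq\varepsilon$. If the common level $c=\langle\gamma,x_0^M\rangle=\langle\gamma,y^M\rangle$ exceeds $\varepsilon$ in magnitude, then your bump $\varepsilon\,\Lambda(\langle\gamma,x\rangle/\varepsilon)\,w$ vanishes at $x_0^M$ and the flow never moves it. You note that $c=0$ "can be arranged by picking $\gamma$ perpendicular to $x_0^M$", but $\gamma$ is part of the given data in the lemma, and re-choosing it can destroy the other hypothesis $x_0^i\notin H_{\varepsilon}^{\gamma}$: for $d=3$ the admissible $\gamma'\perp\{x_0^M,\,x_0^M-y^M\}$ are exactly $\pm(y^M\times x_0^M)/\|y^M\times x_0^M\|$, and any inactive $x_0^i$ lying in $\mathrm{span}\{x_0^M,y^M\}$ satisfies $\langle\gamma',x_0^i\rangle=0$ for both choices, making $x_0^i\in H_{\varepsilon'}^{\gamma'}$ for every $\varepsilon'>0$.

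Even granting $|c|<\varepsilon$, the $c\neq0$ case is not adequately handled. With $w\perp\gamma$ one computes
\begin{equation*}
\frac{\mathrm d}{\mathrm dt}\langle x(t),\gamma\rangle
=\Lambda\!\left(\frac{\langle\gamma,x\rangle}{\varepsilon}\right)\varepsilon\,
\bigl(\langle w,\gamma\rangle-\langle w,x\rangle\langle x,\gamma\rangle\bigr)
=-\Lambda\!\left(\frac{\langle\gamma,x\rangle}{\varepsilon}\right)\varepsilon\,\langle w,x\rangle\,\langle x,\gamma\rangle,
\end{equation*}
so during any "rotational" phase (where $\langle w,x\rangle>0$ is needed to advance) the $\gamma$-component monotonically decays toward zero rather than being preserved; the trajectory drifts to the equator and undershoots $y^M$. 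A "corrective" phase with $w=w_0+\mu\gamma$ can raise $\langle x,\gamma\rangle$, but the drift slows as $\Lambda\to0$ near the slab boundary, and the correction is coupled with further advance along the circle; landing \emph{exactly} on $y^M$ in finite time therefore requires a shooting argument that the proposal does not supply. The paper sidesteps all of this by clearing the complement of a small cap (a large, geodesically rich region) and choosing two explicit intermediate targets $z_1,z_2$ on a geodesic through $y^M$; exact arrival then follows from a straightforward time-rescaling, with no level-set bookkeeping required.
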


The proof is geometrically intuitive but rather technical, so we start with an overview to guide the reader.

\begin{enumerate}
\item \textbf{Active and inactive points.}
Only the active pair $(x_0^M,y^M)$ is allowed to move; the inactive points are the remaining $x_0^i$ with $i<M$. The goal is to send $x_0^M$ to $y^M$ while leaving all inactive points fixed.

\item \textbf{Anchors.}
Choose $\omega\in\S$ with $\langle\gamma,\omega\rangle=0$ and $d_g(\omega,x_0^M),d_g(\omega,y^M)\ge\pi/2$, so $\omega$ is far from both endpoints and orthogonal to $\gamma$. Then pick $\omega_+$ on a geodesic from $\omega$ to $\gamma$ at distance $\pi/8$ from $\omega$, and $\omega_-$ on a geodesic from $\omega$ to $-\gamma$ at distance $\pi/8$ from $\omega$. 
These points will serve to “park” the inactive points inside a small cap around $\omega$.

\item \textbf{Gates.}
A gate is a scalar weight that is $0$ on a chosen spherical cap $\mathscr S$ (no motion there) and positive outside (push toward some $z\in\S$). Concretely it comes from one ReLU $(\langle a,x\rangle-\uptau)_+$ multiplied by a tangent direction via projection.

\item \textbf{Motion.}
We use three actions implemented by the parameters used in the proof.

\begin{enumerate}
\item "Gather" action ($\uppsi_1$).
Use two opposite gates with drift targets $\omega_+$ and $\omega_-$ so that any point in the “on” region moves along a geodesic toward $\omega_\pm$. This monotonically increases $\langle x,\omega_\pm\rangle$ (see estimate \eqref{eq: estimate.neural}) and then exponentially settles near $\omega_\pm$ (cf.\ \eqref{eq: Hartman.Grobman}). After a short time, all inactives lie inside a small cap $B(\omega,3\pi/16)$, while points in the “off” halfspace stay put.

\item "Corridor" ($\uppsi_2$).
Activate a gate with $a=-\omega$ and $\uptau=\cos(3\pi/16)$, so it vanishes on $B(\omega,3\pi/16)$ (the cap containing the inactive points) and is positive outside. Choose $z_1,z_2\in \S$ on a geodesic from $x_0^M$ to $z_2$ that passes through $y^M$ and remains outside the cap. Evolve the system for one time interval with drift target $z_1$; this drives $x_0^M$ toward $z_1$ and makes it uniformly close (cf.\ \eqref{eq: geodesic.toll}). Then switch the drift target to $z_2$ and evolve for the next interval; the trajectory follows the same corridor and reaches $y^M$. Throughout, the inactive points do not move because the gate is identically zero on $B(\omega,3\pi/16)$.

\item "Restore" action ($\uppsi_1^{-1}$).
Re-run the gather action with flipped drifts (same gates, opposite $z$), which time-reverses the first action and returns all inactive points to their original positions; the active point stays at $y^M$ since the corridor gate remains off on the cap.
\end{enumerate}
The composition $(\uppsi_1)^{-1}\circ\uppsi_2\circ\uppsi_1$ maps $x_0^M\mapsto y^M$ and fixes all other $x_0^i$. Choosing drift size $\|\bW\|\asymp (T\varepsilon)^{-1}$ results in total time $T$ and yields $\|\theta\|_{L^\infty}\lesssim 1/(T\varepsilon)$.
\end{enumerate}

\begin{proof}[Proof of \Cref{lem: induction.neural.ode}]
The parameters take the form
\begin{align*}
    (\bW(t), \bU(t),b(t)) = \sum_{j=1}^6 (\bW_j,\bU_j,b_j) 1_{\left[\frac{(j-1)T}{6},\frac{jT}{6}\right]}(t),
\end{align*}
where $\bU_5 = \bU_1$, $\bU_6 = \bU_2$, $\bU_3 = \bU_4$, $b_5 = b_1$, $b_6 = b_2$, $b_3 = b_4$, $\bW_5 = -\bW_1$, and $\bW_6 = -\bW_2$.  The precise matrices and vectors, as well as $\bW_3$, $\bW_4$, are defined later on, and \( T > 0 \) is adjusted later by rescaling the norm of the parameters.

\subsubsection*{Step 1. The anchor points}
In this step we find three anchor points which serve to build the parameters in what follows. 
Since $\langle \gamma, x_0^M-y^M\rangle=0$, we can find some $\omega\in\S$ such that 
\begin{equation}\label{eq: great.circle.hyperplane}
\langle \gamma, \omega\rangle=0,
\end{equation}
as well as
\begin{equation}\label{eq: omega.distance}
d_g(\omega, x_0^M)\geq \frac{\pi}{2},\hspace{1cm}\text{ and } \hspace{1cm}  d_g(\omega, y^M)\geq\frac{\pi}{2}.
\end{equation}
Because of \eqref{eq: great.circle.hyperplane}, we consider the point $\omega_+$ lying on the minimizing geodesic between $\omega$ and $\gamma$, satisfying $d_g(\omega_+,\omega)=\pi/{8}.$
Similarly, we consider the point $\omega_-$ lying on the minimizing geodesic between $\omega$ and $-\gamma$, satisfying 
$d_g(\omega_-,\omega)=\pi/{8}.$
We have 
\begin{align*}
d_g(\omega_+,x_0^M)&\geq d_g(\omega,x_0^M)-d_g(\omega,\omega_+)\geq\frac{3\pi}{8},\\
d_g(\omega_+,y^M)&\geq \frac{3\pi}{8},\quad d_g(\omega_-,x_0^M)\geq \frac{3\pi}{8},\quad d_g(\omega_-,y^M)\geq \frac{3\pi}{8}.
\end{align*}
As a consequence, the hyperplane $\{x\in\S\colon\langle\omega,x\rangle=\cos(\pi/8+\tau)\}$
is a separating hyperplane for the ball $B\left(\omega,\pi/8+\tau\right)$ and the points $x_0^M$ and $y^M$ for every $\tau\in(0,3\pi/8)$; namely
\begin{equation*}
\langle \omega,x_0^M\rangle -\cos\left(\frac{\pi}{8}+\tau\right)=\cos d_g(\omega,x_0^M)-\cos\left(\frac{\pi}{8}+\tau\right)<0,
\end{equation*}
where the inequality is by virtue of \eqref{eq: omega.distance}. 
Analogous computations hold for $y^M$, whereas
\begin{equation*}
\langle\omega,x\rangle -\cos\left(\frac{\pi}{8}+\tau\right)>0
\end{equation*}
for all $x\in B(\omega,\pi/8+\tau/2)$ and $\tau\in(0,3\pi/8)$ (see \Cref{fig: setup.ball} for an illustration of the geometric setup).

\begin{figure}[!ht]
    \centering
    \includegraphics[scale=0.55]{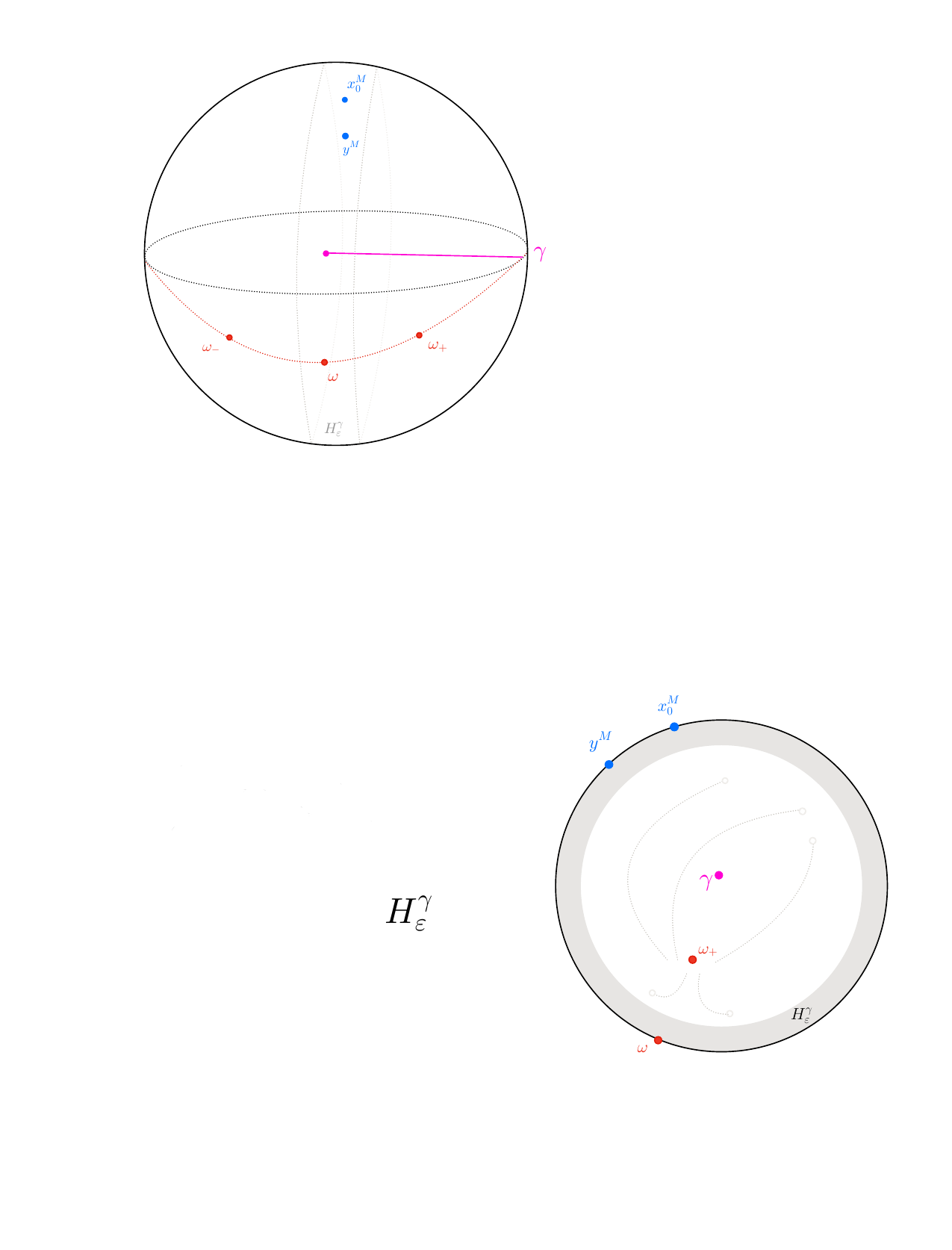}
    \caption{The geometric configuration of Step 1.}
    \label{fig: setup.ball}
\end{figure}
            
\subsubsection*{Step 2. Isolating $x_0^M$ and $y^M$} 
Let $\epsilon\coloneqq\min\{\varepsilon,\pi/4\}$,  
$ \bU_1= {\bf1}\gamma^\top$, $b_1=-\frac{\epsilon}{2}{\bf1}$, so
 $(\bU_1x+b_1)_+=(\langle \gamma, x\rangle -\epsilon/2)_+{\bf1}.$
Pick any $\bW_1$ with
$\bW_1{\bf1}=\omega_+$ and set $\mathscr{S}_+\coloneqq\{ x\in \S\colon\langle \gamma, x\rangle \geq \epsilon\}.$
Obviously $\omega_+\in \mathscr{S}_+$. 
Observe that the trajectories of the ODE
\begin{equation} \label{eq: neural.ode.separation}
\dot{x}(t)=(\langle \gamma, x(t)\rangle -\epsilon/2)_+\proj_{x(t)}\omega_+\hspace{1cm}\text{ for $t\geq0$,}
\end{equation}
follow the Riemannian gradient flow of the distance between $\omega_+$ and $x$ in $\mathscr{S}_+$. Indeed,
\begin{equation*}
     \nabla_1 d_g(x,\omega_+)=-\frac{\proj_x(\omega_+)}{\sqrt{1-\langle x,\omega_+\rangle^2 }}.
\end{equation*}
Then, setting $f(x)=(\langle \gamma, x\rangle -\epsilon/2)_+$, we have
 \begin{align*}
    \dot{x}(t)&=-f(x(t))\,\sqrt{1-\langle x(t),\omega_+\rangle^2 } \,\nabla_1 d_g(x(t),\omega_+)=-\overline{f}(x(t))\,\nabla_1 d_g(x(t),\omega_+).
 \end{align*}
 Since $\overline{f}$ is a nonnegative scalar function, by appropriately reparameterizing time, we conclude that $x(t)$ follows the desired gradient flow.
 In turn, the trajectory $x(t)$ of \eqref{eq: neural.ode.separation} starting from any $x_0\in\mathscr{S}_+$ always lies on the minimal geodesic from $x_0$ to $\omega_+\in\mathscr{S}_+$. Since $\mathscr{S}_+$ is geodesically convex, we gather that $x(t)\in\mathscr{S}_+$ for all $t\geq0$. Then, notice that 
 \begin{equation*}
     \overline{f}(x)=0 \qquad \Longleftrightarrow \qquad x=\omega_+ \quad \text{ or } \quad x\in  \left\{y\in\S:\langle \gamma,y\rangle\leq \frac{\epsilon}{2}\right\}.
 \end{equation*}
 Thus, unless $x(t)=\omega_+$, $\overline{f}$ is uniformly bounded from below on $\mathscr{S}_+$, since $\nabla_1 d_g(x,\omega_+)=0$ is equivalent to $x=\pm \omega_+$, 
 we can conclude that $x(t)\to\omega_+$ as $t\to+\infty$ for any $x_0\in\mathscr{S}_+$ by applying the LaSalle invariance principle.

For any $x_0\in\S$, set $T_{\frac{\pi}{16}}(x_0)\coloneqq\inf\{t\geq 0\colon x(t)\in B(\omega_+,\pi/{16})\},$
where $x(\cdot)$ is the solution to the Cauchy problem for \eqref{eq: neural.ode.separation} with data $x_0$. 
Since $\|\gamma\|=1$, $\|\bW_1\|_{\text{op}}\leq1$ and $\|b_1\|\leq\epsilon\sqrt{d}/2$, bounding the $L^\infty$--norm of the parameters comes from bounding $T_{\frac{\pi}{16}}(x_0)$ uniformly over $x_0\in\mathscr{S}_+$ and rescaling time. 
For every $x_0\in B(\omega_+,\pi/{16})$ we see that $T_{\frac{\pi}{16}}(x_0)$ is trivially $0$, whereas for  $x_0\in \mathscr{S}_+\setminus B(\omega_+,\pi/{16})$ one has
\begin{align} \label{eq: estimate.neural}
\frac{\diff}{\diff t} \langle x(t),\omega_+\rangle&=\left( \langle \gamma, x(t)\rangle-\frac{\epsilon}{2}\right)_+\left(1-\langle \omega_+,x\rangle^2\right)\nonumber\\
&\geq \frac{\epsilon}{2}\left(1-\langle \omega_+,x\rangle^2\right)\geq \frac{\epsilon}{2}\left(1-\cos^2\left(\frac{\pi}{16}\right)\right).
\end{align}
Hence $T_{\frac{\pi}{16}}(x_0)=O(1/\epsilon)$ for all $x_0\in \mathscr{S}_+$.  

Finally, by following the same arguments leading to \eqref{eq: estimate.neural}, beyond some large enough time, and for every $x_0\in \mathscr{S}_+$, we can apply the Hartman-Grobman theorem \cite{shub2013global}: the behavior near the critical point $\omega_+$ is governed\footnote{Note that the critical point $\omega_+$ is hyperbolic since we are working in $\mathsf{T}_{\omega_+}\S$. On $\mathbb{R}^d$, there is a zero eigenvalue associated to the radial direction.} by the linearized system 
 \begin{equation*}
 \begin{cases}
     \dot{y}(t)=-\left(\langle \gamma,\omega_+\rangle-\displaystyle\epsilon/2\right)y(t) & \text{ in } \R_{\geq0}\\
     y(0)=y_0\in \mathsf{T}_{\omega_+}\S,
     \end{cases}
 \end{equation*}
 which is exponentially stable. In other words, for all $x_0\in\mathscr{S}_+$,
 \begin{equation}\label{eq: Hartman.Grobman}
     d_g(x(t),\omega_+)\leq Ke^{-\lambda t} \hspace{1cm} \text{  for all } t\geq 0, 
 \end{equation}
and for some $\lambda>0$ and $K\geq 1$ which depend on $x_0$, $\epsilon$ and $\gamma$ only.

Similarly, let $\bU_2=- {\bf1}\gamma^\top$ and $b_2=-\frac{\epsilon}{2}{\bf1}$, so $(\bU_2x+b_2)_+=( \langle -\gamma, x\rangle -\epsilon/2)_+ {\bf1}$. 
Choose any $\bW_2$ with $\bW_2{\bf1}=\omega_-$ and set $\mathscr{S}_-\coloneqq\{ x\in \S\colon \langle -\gamma, x\rangle \geq \epsilon\}$. After reasoning similarly for $\mathscr{S}_-$ as for $\mathscr{S}_+$, and by rescaling time so that $\|\bW_1\|_{\text{op}}$ and $\|\bW_2\|_{\text{op}}$ are $O(1/(T\cdot\epsilon))$, 
we deduce that for any $T>0$ there exist piecewise constant $\theta_1:[0,T/3]\to\mathscr{M}_{d\times d}(\mathbb{R})^2\times \mathbb{R}^d$, with two switches, such that the associated flow map $\upphi^{T/3}_{\theta_1}$ of \eqref{eq: neural.ode.sphere} is a Lipschitz-continuous invertible map satisfying
\begin{align}\label{eq: sphere.separation}
\upphi^{\frac{T}{3}}_{\theta_1}(x_0^i)\in B(\omega,3\pi/16),\quad
\upphi^{\frac{T}{3}}_{\theta_1}(x)=x\hspace{1cm}\text{ if }x\in H_\epsilon^\gamma.
\end{align}
(See \Cref{fig:?} for an illustration of the the isolation of $x_0^M$ and $y^M$.)

\begin{figure}
    \centering
        \includegraphics[scale=0.6]{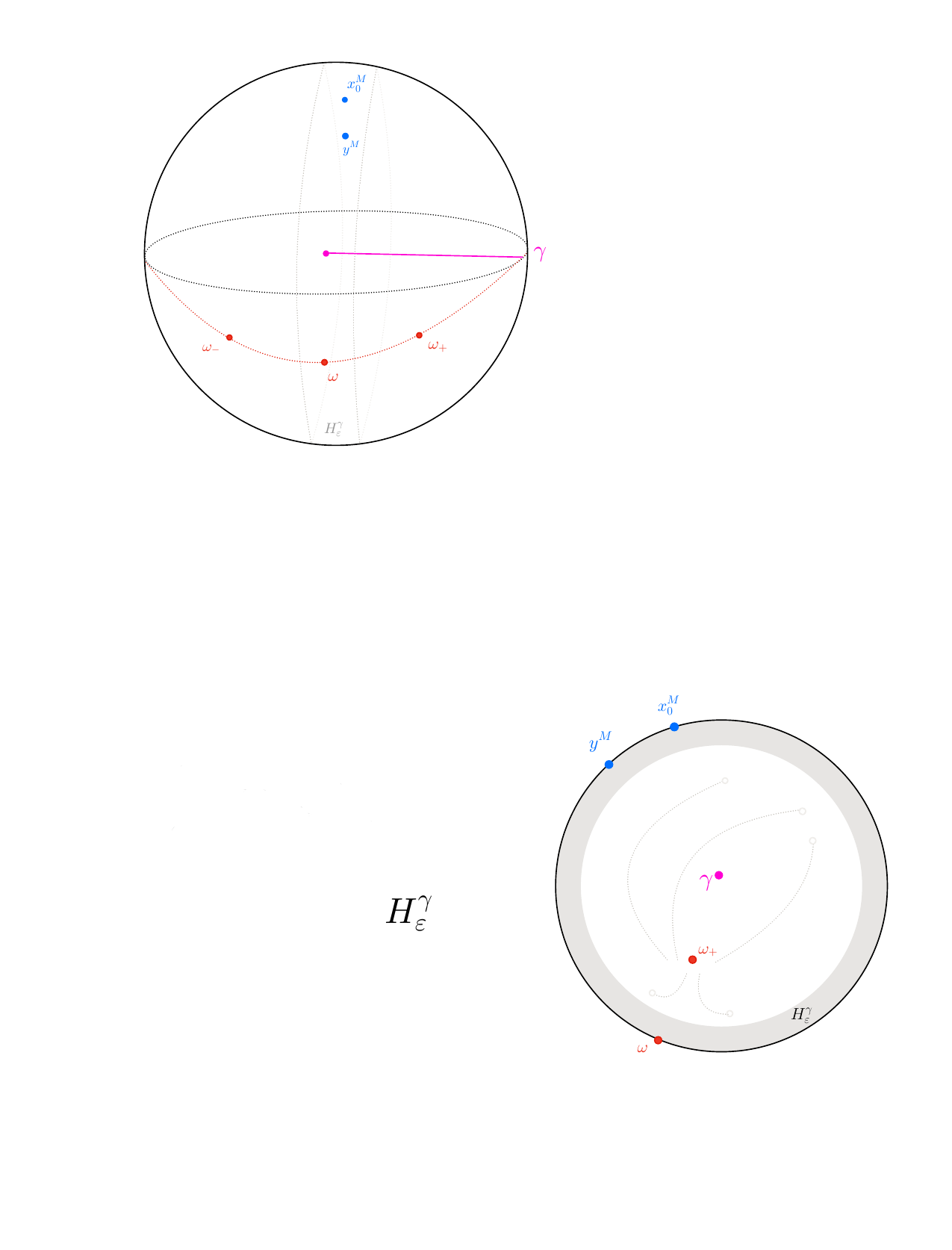}
    \hspace{0.5cm}
        \includegraphics[scale=0.6]{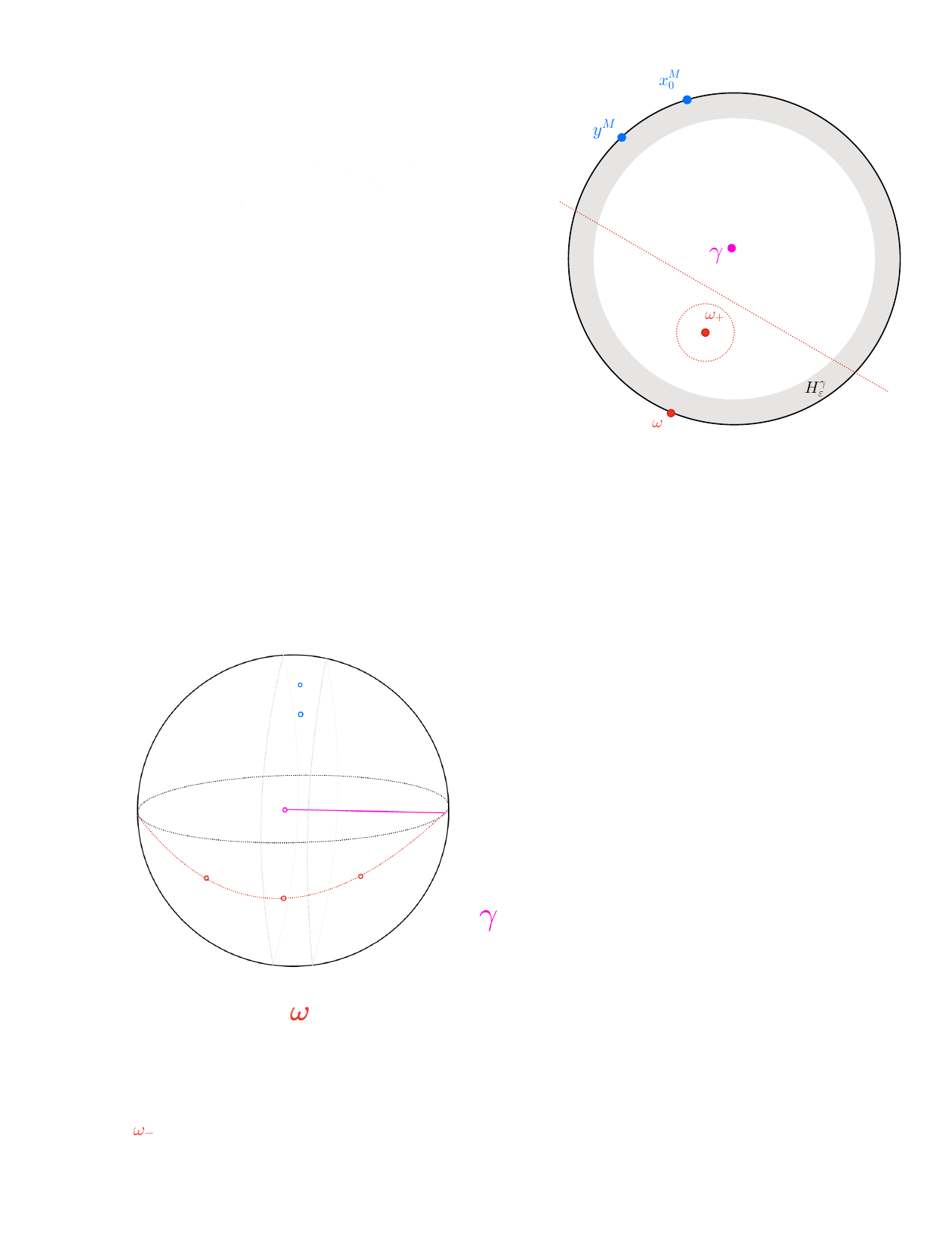}
    \caption{({\it Left}) All points in one spherical cap converge to $\omega_+$. ({\it Right}) All points aside the $M$-th one are in a neighbourhood of $\omega_+$ or $\omega_-$. Consequently there is a separating hyperplane between $x_0^M$ and $y^M$ and the rest of the points (dashed).}
    \label{fig:?}
\end{figure}

\subsubsection*{Step 3. Steering $x_0^M$ to $y^M$}

By virtue of \eqref{eq: sphere.separation}, the hyperplane $\{x\in\S\colon\langle\omega, x\rangle =\cos(3\pi/16)\}$
is a separating hyperplane: it separates $x_0^N$ and $y^N$ from $\upphi_{\theta_1}^{T/3}(x_0^i)$ for $i\in\llbracket1,M-1\rrbracket$. Consider $\bU_3=- {\bf1}\omega^\top$ and $b_3=\cos({3\pi}/{16}){\bf1}$. We have 
\begin{align*}
    \left( \bU_3x+b_3 \right)_+=0 \hspace{1cm} \text{ for } &x\in B(\omega, {3\pi}/{16}),\\
    \left( \bU_3x+b_3 \right)_+= \underbrace{\left(\langle -\omega, x\rangle+\cos\left(\frac{3\pi}{16}\right)\right)}_{>0} {\bf1} \hspace{1cm} \text{ for } &x\in \S\setminus B(\omega,{3\pi}/{16}).
\end{align*}
Take two points $z_1,z_2\in \S\setminus B(\omega, {3\pi}/{16})$ such that
\begin{enumerate}
    \item $\{c(s)\}_{s\in[0,1]}$ is a geodesic satisfying $c(0)=x_0^M$ and $c(1)=z_2$;
    \item $c(1-s_0)=y^M$ for some $s_0\in(0,1)$;
    \item $c(s_{z_1})=z_1$ for some $s_{z_1}\in(0,1-s_0)$;
    \item $\{c(s)\}_{s\in[0,1]}\subset \S\setminus B(\omega, {3\pi}/{16})$;
    \item $d_g(z_1,x_0^M)\leq \kappa\pi$ and $d_g(z_1,z_2)\leq \kappa\pi$ for some\footnote{can be chosen as such because $\{c(s)\}_{s\in[0,1]}\subset \S\setminus B(\omega,{3\pi}/{16})$---indeed, take $\kappa={29}/{32}$.} $\kappa<1$.
\end{enumerate}
Consider any $d\times d$ matrix $\bW_3$ such that $\bW_3{\bf1}=z_1$.
The Cauchy problem \eqref{eq: neural.ode.sphere} with these parameters, for the $i=M$-th particle, reads
\begin{equation}\label{eq: neural.ode.step3}
\begin{cases}
\dot{x}(t)=\left(\langle -\omega, x(t)\rangle+\cos\left(\frac{3\pi}{16}\right)\right)_+\proj_{x(t)}z_1 &\text{ on } \R_{\geq0}\\
x(0)=x_0^M.
\end{cases}
\end{equation}
Since $d_g(x_0^M,z_1)\leq\kappa\pi$, and since the minimizing geodesic between $x_0^M$ and $z_1$ is contained in $\{c(s)\}_{s\in [0,1]}\subset B(\omega,{3\pi}/{16})$, we gather that there exists some large enough time $\tau>0$ such that
\begin{equation} \label{eq: geodesic.toll}
    d_g\left( x(\tau),z_2\right)\leq d_g\left( x(\tau),z_1\right)+d_g\left( z_1,z_2\right)\lesssim e^{-\lambda\tau}+\kappa\pi\leq\kappa_2\pi
\end{equation}
for some $\kappa_2<1$ and $\lambda>0$. This comes from the long-time convergence of \eqref{eq: neural.ode.step3} to $z_1$, which can be shown by following the same arguments as for \eqref{eq: Hartman.Grobman}, replacing $\omega_+$ by $z_1$. 
For any $d\times d$ matrix $\bW_4$ such that $\bW_4{\bf1}=z_2$, the Cauchy problem \eqref{eq: neural.ode.sphere}, for the $i=M$-th particle, reads
\begin{equation} \label{eq: neural.ode.step3.2}
\begin{cases}
\dot{x}(t)=\left(\langle -\omega, x(t)\rangle+\cos\left(\frac{3\pi}{16}\right)\right)_+\proj_{x(t)}z_2 &\text{ for } t\geq\tau\\
x(\tau) = x(\tau)
\end{cases}
\end{equation}
where $x(\tau)$ is the solution of \eqref{eq: neural.ode.step3} at $t=\tau$. Since $d_g(z_1,z_2)\leq \kappa \pi$, $y^M$ lies on the minimizing geodesic between $x(\tau)$ and $z_2$. All the while, thanks to \eqref{eq: geodesic.toll}, taking $T$ even larger than before, we deduce that that the solution to \eqref{eq: neural.ode.step3.2} satisfies
$x(T)=y^M.$
Thus, as in the previous step, we deduce that for any $T>0$ there exist piecewise constant $\theta_2:[T/3,2T/3]\to\mathscr{M}_{d\times d}(\mathbb{R})^2\times\mathbb{R}^d$, with two switches, such that the associated flow map $\upphi^{2T/3}_{\theta_2}$ of \eqref{eq: neural.ode.sphere} is a Lipschitz-continuous invertible map satisfying
\begin{align*}
\upphi^{\frac{2T}{3}}_{\theta_2}(x)&=x\hspace{1cm}\text{ if }x\in B(\omega, {3\pi}/{16}),\\
\upphi^{\frac{2T}{3}}_{\theta_2}(x_0^M)&=y^M,
\end{align*}
and
\begin{equation*}
    (\upphi_{\theta_2}^{\frac{2T}{3}}\circ\upphi_{\theta_1}^{\frac{T}{3}})(x_0^i)= \upphi_{\theta_1}^{\frac{T}{3}}(x_0^i)\in B(\omega,{3\pi}/{16})\qquad\text{ for } i\in\llbracket1,M-1\rrbracket
\end{equation*}
as well as
\begin{equation*}
    (\upphi_{\theta_2}^{\frac{2T}{3}}\circ\upphi_{\theta_1}^{\frac{T}{3}})(x_0^M)= \upphi_{\theta_2}^{\frac{2T}{3}}(x_0^M)=y^M.
\end{equation*}

\subsubsection*{Step 4. Bringing $x^i(T)$ back to $y^i$}
We conclude by applying the inverse of $\upphi^{\frac{T}{3}}_{\theta_1}$: defining
$\upphi^T_{\text{fin}}\coloneqq(\upphi^{\frac{T}{3}}_{\theta_1})^{-1}\circ \upphi^{\frac{2T}{3}}_{\theta_2} \circ\upphi^{\frac{T}{3}}_{\theta_1},$
we have $\upphi^T_{\text{fin}}(x_0^i)=y^i$ for all $i\in\llbracket1,M\rrbracket$, as desired.
\end{proof}

\begin{remark}
\Cref{prop: compression} yields a flow map that clusters the support of the input measure, which in turn allows to reduce a universal approximation property of maps in $L^p(\S;\S)$ to interpolation via flow maps  proved in \Cref{prop: interpolation.neural.ode}. 
Indeed it suffices to consider a simple function 
$\varphi(x)=\sum_{i=1}^N y_i1_{\Omega_i}(x)$
with $y_i\in \S$. Universal approximation in $L^p(\S;\S)$, $p<+\infty$, is equivalent to the $\mathsf{W}_p$-approximate interpolation of
\begin{equation*}
\diff\mu_0^i(x)=1 _{\Omega_i}(x)\diff x,\quad
\mu_1^i=|\Omega_i|\updelta_{y_i}
\end{equation*}
for $i\in\llbracket1,N\rrbracket$. Note that, by construction, the supports of $\mu_0^i$ (and of $\mu_1^i$) are pairwise disjoint. Thus the attention component of the vector field is not needed to perform this task. This is generalized in the next section.
\end{remark}

\section{Proofs of the main results} \label{sec: proofs}

Our overarching goal is to construct the solution map $\Phi_{\mathrm{fin}}^T:\mathscr{P}(\S)\to\mathscr{P}(\S)$ of the form
$$\Phi_{\mathrm{fin}}^T\coloneqq(\Phi^{\frac{T}{3}}_{\theta_3})^{-1}\circ \Phi^{\frac{T}{3}}_{\theta_2}\circ \Phi^{\frac{T}{3}}_{\theta_1}, 
$$
where $\Phi_{\theta_1}^{T/3}$ and $\Phi_{\theta_3}^{T/3}$ 
stem from \Cref{prop: separation}, so that $\Phi_1(\mu_0^i)$---and $\Phi_3(\mu_1^i)$---for $i\in\llbracket1,N\rrbracket$, have pairwise disentangled supports. The map 
$\Phi_{\theta_2}^{T/3}$ 
is constructed in this section (see \Cref{fig: overview.2} for a schematic overview of the entire proof). 

The main clue lies in the following three lemmas.

\begin{lemma}
\label{lem: hyp.propagation}
Suppose that for every $i$ there exists $\mathsf{T}^i\in L^2(\S;\S)$ with $\mathsf{T}^i_\#\mu_0^i=\mu_1^i$.
Consider the flow map $\Phi_1:\mathscr{P}(\S)\to\mathscr{P}(\S)$ (resp. $\Phi_3$) given by \Cref{prop: separation} with data $\mu_0^i\in\mathscr{P}(\S)$ (resp. $\mu_1^i\in\mathscr{P}(\S)$). 
Then there exists a Lipschitz-continuous and invertible map $\uppsi:\S\to\S$ such that
\begin{equation} \label{eq: diffeo.supports}
\uppsi\big|_{\supp\,\Phi_1(\mu_0^i)}=\uppsi^i\big|_{\supp\,\Phi_1(\mu_0^i)}
\end{equation}
for any $i\in\llbracket1,N\rrbracket$, where $\uppsi^i:\S\to\S$ is another Lipschitz-continuous and invertible map that satisfies
\begin{equation*}
(\uppsi^i\circ \mathsf{T}^i_{\Phi_1})_\#\mu_0^i= (\uppsi\circ \mathsf{T}^i_{\Phi_1})_\#\mu_0^i=(\mathsf{T}^i_{\Phi_3})_\#\mu_1^i=\Phi_3(\mu_1^i)
\end{equation*}
for some Lipschitz-continuous and invertible maps $\mathsf{T}_{\Phi_1}^i, \mathsf{T}_{\Phi_3}^i:\S\to\S$.
\end{lemma}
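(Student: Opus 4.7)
The proof reduces the pushforward claim to properties of the characteristic flow associated with \eqref{eq: cauchy.pb}. Since $\theta\in L^\infty((0,T);\Uptheta)$ makes the vector field \eqref{eq: vf} uniformly Lipschitz in $x$, the characteristic ODE admits a unique global flow, and invertibility follows by running \eqref{eq: cauchy.pb} backward in time. For each $i$, I denote by $\mathsf{T}^i_{\Phi_1}:\S\to\S$ the time-$T/3$ characteristic flow obtained when \eqref{eq: cauchy.pb} is driven by the datum $\mu_0^i$ and the parameters generating $\Phi_1$, and analogously $\mathsf{T}^i_{\Phi_3}:\S\to\S$ for $\Phi_3$ with datum $\mu_1^i$. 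Both are bi-Lipschitz homeomorphisms of $\S$ by Cauchy--Lipschitz, and by construction $(\mathsf{T}^i_{\Phi_1})_\#\mu_0^i=\Phi_1(\mu_0^i)$ and $(\mathsf{T}^i_{\Phi_3})_\#\mu_1^i=\Phi_3(\mu_1^i)$, identifying the two auxiliary maps named in the statement.

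\textbf{Construction of $\uppsi^i$.} For each $i$, I would set $\uppsi^i\coloneqq\mathsf{T}^i_{\Phi_3}\circ\mathsf{T}^i\circ(\mathsf{T}^i_{\Phi_1})^{-1}$, a Borel map $\S\to\S$. The chain rule for pushforwards then gives
$(\uppsi^i\circ \mathsf{T}^i_{\Phi_1})_\#\mu_0^i = (\mathsf{T}^i_{\Phi_3}\circ\mathsf{T}^i)_\#\mu_0^i = (\mathsf{T}^i_{\Phi_3})_\#\mu_1^i = \Phi_3(\mu_1^i)$,
which is precisely the identity in the statement. The bi-Lipschitz character of $\uppsi^i$ is inherited from the outer factors $\mathsf{T}^i_{\Phi_1}$ and $\mathsf{T}^i_{\Phi_3}$ once the merely $L^2$-integrable inner factor $\mathsf{T}^i$ is upgraded to a bi-Lipschitz surrogate $\tilde{\mathsf{T}}^i$ by Lusin-type approximation; the resulting $\mathsf{W}_2$-error in the pushforward is small and absorbed into the overall $\varepsilon$-tolerance of \Cref{thm: main.result}.

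\textbf{Gluing into a single $\uppsi$.} By \Cref{prop: separation}, the compacts $K^i\coloneqq\supp\Phi_1(\mu_0^i)$ are pairwise disjoint and thus have mutual geodesic distances bounded below by some $\delta>0$; the same is true of the images $\uppsi^i(K^i)\subset\supp\Phi_3(\mu_1^i)$, again by \Cref{prop: separation} applied to the targets. I would then glue the $\uppsi^i$ into a single bi-Lipschitz map $\uppsi:\S\to\S$ agreeing with $\uppsi^i$ on each $K^i$, by extending each local map across a thin tubular neighborhood $K^i_{\delta/3}$ to the identity on its outer boundary via geodesic interpolation, and setting $\uppsi\equiv\mathrm{Id}$ outside $\bigcup_i K^i_{\delta/3}$. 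A standard partition-of-unity construction on the compact Riemannian manifold $\S$ makes the resulting $\uppsi$ bi-Lipschitz and invertible, with constants controlled by those of the $\uppsi^i$ and by $\delta$.

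\textbf{Main obstacle.} The delicate point is reconciling the a priori merely $L^2$-regularity of the given $\mathsf{T}^i$ with the bi-Lipschitz character ascribed to $\uppsi^i$ and $\uppsi$ in the statement. The cleanest route is the Lusin regularization sketched above, after which the pushforward equalities hold up to a $\mathsf{W}_2$-error that will be absorbed in the final $\varepsilon$; equivalently, one may read $\uppsi|_{K^i}=\uppsi^i|_{K^i}$ as holding $\Phi_1(\mu_0^i)$-almost everywhere, which is all that is actually needed when this lemma is fed into the $L^2$-estimates of \Cref{lem: monge} and \Cref{lem: univ.approx}. The rest is bookkeeping on the characteristic flows.
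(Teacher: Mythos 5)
Your proof follows essentially the same route as the paper's: you identify the characteristic flows $\mathsf{T}^i_{\Phi_1},\mathsf{T}^i_{\Phi_3}$ from Cauchy--Lipschitz, you define $\uppsi^i=\mathsf{T}^i_{\Phi_3}\circ\mathsf{T}^i\circ(\mathsf{T}^i_{\Phi_1})^{-1}$ by exactly the same composition the paper uses, and you glue using the disjointness of supports from \Cref{prop: separation}. The core pushforward chain $(\uppsi^i\circ\mathsf{T}^i_{\Phi_1})_\#\mu_0^i=\Phi_3(\mu_1^i)$ is correct and matches the paper.

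Two remarks on the regularity discussion, which you correctly flag as the delicate point. First, the paper's own proof does not in fact establish the Lipschitz character asserted in the lemma statement: it explicitly asks only for an \emph{integrable} $\uppsi^i$ (and the overview in \Cref{sec: sketch} also calls $\uppsi$ an integrable map), and then the lemma is fed into \Cref{lem: monge} and \Cref{lem: univ.approx}, where only $L^2(\mu)$-membership of $\uppsi$ with $\mu=\sum_i\Phi_1(\mu_0^i)$ matters (in \Cref{lem: monge} the bijectivity is required of $\mathsf{T}^1$, which downstream is $\Phi^{2T/3}_{\theta_2}$, not of $\uppsi$). Your observation that the equality $\uppsi|_{K^i}=\uppsi^i|_{K^i}$ need only hold $\Phi_1(\mu_0^i)$-a.e.\ is exactly the right reading; the gluing then amounts to defining $\uppsi$ piecewise on the pairwise-disjoint supports, which is trivially well-defined and sufficient. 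Second, your explicit tubular-neighborhood construction of a globally bi-Lipschitz, invertible $\uppsi$ would, taken literally, run into an injectivity problem: the image sets $\uppsi^i(K^i)\subset\supp\Phi_3(\mu_1^i)$ have no reason to avoid the tubes $K^j_{\delta/3}$ or the region where you set $\uppsi\equiv\mathrm{Id}$, so the glued map may fail to be injective. Fortunately this is moot, since (as you and the paper's proof both implicitly acknowledge) only the $L^2$-a.e.\ statement is used; the Lusin-regularization detour and the bi-Lipschitz extension are dispensable.
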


The proof can be found in {\bf \Cref{proof: hyp.propagation}}.

\begin{lemma}
\label{lem: monge}
Suppose $\mu\in \mathscr{P}(\S)$ and $\mathsf{T}^1,\mathsf{T}^2:\S\to\S$ measurable, with $\mathsf{T}^1$ bijective. Then
\begin{equation} \label{eq: linearized.wasserstein}
\mathsf{W}_2\left(\mathsf{T}^1_\#\mu,\mathsf{T}^2_\#\mu\right)\lesssim\left\|\mathsf{T}^1-\mathsf{T}^2\right\|_{L^2(\mu)}.
\end{equation}
\end{lemma}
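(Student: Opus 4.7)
The plan is to use the most natural coupling between $\mathsf{T}^1_\#\mu$ and $\mathsf{T}^2_\#\mu$, namely the one induced by $\mu$ itself. Specifically, I would define the transport plan
\begin{equation*}
\pi \coloneqq (\mathsf{T}^1,\mathsf{T}^2)_\#\mu \in \mathscr{P}(\S\times\S),
\end{equation*}
and verify that it is admissible: its first marginal is $(p_1)_\# \pi = \mathsf{T}^1_\#\mu$ and its second marginal is $(p_2)_\#\pi = \mathsf{T}^2_\#\mu$, where $p_1,p_2$ denote the canonical projections. This is immediate from the definition of pushforward and the identity $p_i\circ(\mathsf{T}^1,\mathsf{T}^2)=\mathsf{T}^i$.

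Plugging $\pi$ into the variational definition of the squared Wasserstein distance on $\S$ (taken with respect to the geodesic metric $d_g$) yields
\begin{equation*}
\mathsf{W}_2^2(\mathsf{T}^1_\#\mu,\mathsf{T}^2_\#\mu) \;\leq\; \int d_g(y_1,y_2)^2\,\diff\pi(y_1,y_2) \;=\; \int d_g\bigl(\mathsf{T}^1(x),\mathsf{T}^2(x)\bigr)^2\,\diff\mu(x).
\end{equation*}
I would then pass from geodesic to chordal distance using the elementary inequality $d_g(y_1,y_2)=2\arcsin(\|y_1-y_2\|/2)\leq (\pi/2)\|y_1-y_2\|$ valid for all $y_1,y_2\in\S$. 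Substituting gives
\begin{equation*}
\mathsf{W}_2^2(\mathsf{T}^1_\#\mu,\mathsf{T}^2_\#\mu)\;\leq\;\frac{\pi^2}{4}\int \|\mathsf{T}^1(x)-\mathsf{T}^2(x)\|^2\,\diff\mu(x)\;=\;\frac{\pi^2}{4}\bigl\|\mathsf{T}^1-\mathsf{T}^2\bigr\|_{L^2(\mu)}^2,
\end{equation*}
and taking square roots concludes \eqref{eq: linearized.wasserstein}.

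There is no real obstacle here: the argument is a one-line coupling estimate, plus the standard chord-vs-arc comparison on $\S$. Note that the bijectivity of $\mathsf{T}^1$ is not actually needed for this inequality (the coupling $\pi$ is defined and admissible for arbitrary measurable $\mathsf{T}^1,\mathsf{T}^2$); it is presumably included because in the application $\mathsf{T}^1 = \Phi_{\theta_2}^{2T/3}$ is already a Lipschitz bijective flow map, a property needed downstream rather than in this particular lemma.
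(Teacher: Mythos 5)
Your proof is correct and uses essentially the same argument as the paper: the paper defines $\uppsi\coloneqq\mathsf{T}^2\circ(\mathsf{T}^1)^{-1}$ and uses the coupling $(\mathrm{Id},\uppsi)_\#(\mathsf{T}^1_\#\mu)$, which is exactly your $\pi=(\mathsf{T}^1,\mathsf{T}^2)_\#\mu$ written differently. Your formulation is the cleaner one — it makes transparent, as you observe, that the bijectivity hypothesis on $\mathsf{T}^1$ is not actually used for this inequality, whereas the paper's detour through $\uppsi$ obscures this.
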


The proof is elementary, but brief, thus we provide it for completeness.

\begin{proof}[Proof of \Cref{lem: monge}]
Since $\mathsf{T}^1$ is bijective, there is a measurable $\uppsi:\S\to \S$ such that $\uppsi(\mathsf{T}^1(x))=\mathsf{T}^2(x)$ for all $x\in\S$. 
Then
\begin{equation*}
\mathsf{W}_2^2\left(\mathsf{T}^1_\#\mu,\mathsf{T}^2_\#\mu\right)\lesssim\int\|x-\uppsi(x)\|^2(\mathsf{T}^1_\#\mu)(\diff x)=\left\|\mathsf{T}^1-\mathsf{T}^2\right\|_{L^2(\mu)}^2.\qedhere
\end{equation*}
\end{proof}

\begin{remark}
    When $\mu$ is absolutely continuous with respect to the Lebesgue measure, and
    $\mathsf{T}^1$ and $\mathsf{T}^2$ are the optimal transport maps between $\mu$ and $\nu_1$, and $\mu$ and $\nu_2$ respectively, the upper bound in \eqref{eq: linearized.wasserstein} is the \emph{linearized optimal transport distance} (see  \cite{delalande2023quantitative, jiang2023algorithms}).
\end{remark}

Finally,

\begin{lemma} \label{lem: univ.approx}
Suppose $\varepsilon>0$ and $\mu\in\mathscr{P}(\S)$. 
For every $\uppsi\in L^2(\S;\S)$, there exists a Lipschitz-continuous and invertible map $\uppsi_\varepsilon:\S\to\S$ induced by the solution map of \eqref{eq: neural.pde.sphere}, namely 
$\Phi^T_{\theta_\varepsilon}(\mu)=(\uppsi_\varepsilon)_\#\mu$ for some piecewise constant $\theta_\varepsilon: [0,T]\to\Uptheta$ with finitely many switches, such that
\begin{equation*}
\left\|\uppsi-\uppsi_\varepsilon\right\|_{L^2(\mu)}\leq\varepsilon.
\end{equation*}
\end{lemma}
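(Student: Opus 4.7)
The plan is to reduce the lemma to a standard $L^2(\mu)$-approximation of $\uppsi$ by a step function, which we then realize as the $\mu$-pushforward of a Transformer flow by combining the clustering of \Cref{prop: compression} with the neural-ODE matching of \Cref{prop: interpolation.neural.ode}. Concretely, pick an $(\varepsilon/4)$-net $\{y_k\}_{k=1}^K\subset\S$, let $V_k$ be the Voronoi cell of $y_k$, set $E_k\coloneqq\uppsi^{-1}(V_k)$, and define $\uppsi^{(K)}\coloneqq\sum_k y_k 1_{E_k}$. Since $\uppsi$ takes values in $\S$ one has $\|\uppsi-\uppsi^{(K)}\|_{L^2(\mu)}\leq\varepsilon/4$, so it suffices to construct a Lipschitz invertible $\uppsi_\varepsilon$, induced by a piecewise-constant $\theta_\varepsilon$ with $\bV\equiv 0$, such that $\|\uppsi^{(K)}-\uppsi_\varepsilon\|_{L^2(\mu)}\leq 3\varepsilon/4$.

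Decompose $\mu=\mu_d+\mu_a$ into diffuse and atomic parts and truncate $\mu_a$ to finitely many atoms $\{a_j\}_{j=1}^J$ retaining mass at least $1-\delta$, for a $\delta$ to be tuned. Within each $E_k$, pack $\supp(\mu_d|_{E_k})$ with finitely many open balls whose union $B_k\subset\mathrm{int}\,E_k$ satisfies $\mu_d(B_k)\geq\mu_d(E_k)-\delta/K$; shrink the balls near $\partial E_k$ so that the geodesic convex hulls $\conv_g B_k$ are pairwise disjoint and disjoint from every atom $a_j$ that lies outside $B_k$. Pick anchors $\bar y_k\in\mathrm{int}\,\conv_g B_k$ and perturb the targets $y_k$ by an arbitrarily small amount so that the combined source-target pairs $\{(\bar y_k,y_k)\}\cup\{(a_j,y_{k(j)})\}$, where $a_j\in E_{k(j)}$, satisfy the separating-hyperplane hypothesis of \Cref{prop: interpolation.neural.ode}. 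Next, apply \Cref{prop: compression} to the disentangled family $\{\mu_d|_{B_k}\}_k$ to produce $\theta_A$ and a Lipschitz invertible flow $\Phi_A:\S\to\S$ sending each $\mu_d|_{B_k}$ within $\mathsf{W}_\infty$-distance $\eta$ of $\mu_d(B_k)\updelta_{\bar y_k}$, while leaving invariant the atoms and the mass outside $\bigcup_k B_k$ thanks to the tubular invariance \eqref{eq: eq.tubular.mass.movement}. Then apply \Cref{prop: interpolation.neural.ode} to the resulting finitely supported measure with source-target pairs $\bar y_k\mapsto y_k$ and $a_j\mapsto y_{k(j)}$; this yields $\theta_B$ and a Lipschitz invertible $\Phi_B:\S\to\S$. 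Setting $\uppsi_\varepsilon\coloneqq\Phi_B\circ\Phi_A$ and $\theta_\varepsilon$ the concatenation of $\theta_A$ and $\theta_B$, we have $\Phi^T_{\theta_\varepsilon}(\mu)=(\uppsi_\varepsilon)_\#\mu$ as required.

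For the error estimate: on the ``core'' $\bigcup_k B_k\cup\{a_j\}$, $\|\uppsi_\varepsilon(x)-y_k\|\leq\eta$ for $x\in B_k$ and $\uppsi_\varepsilon(a_j)=y_{k(j)}$, so this region contributes at most $\eta^2$ to $\|\uppsi^{(K)}-\uppsi_\varepsilon\|_{L^2(\mu)}^2$; the ``buffer'' has $\mu$-mass $O(\delta)$ and contributes at most $4\delta$ since both maps take values on $\S\subset\R^d$ of Euclidean diameter $2$. Choosing $\delta,\eta$ small enough yields $\|\uppsi-\uppsi_\varepsilon\|_{L^2(\mu)}\leq\varepsilon/4+\eta+2\sqrt{\delta}\leq\varepsilon$. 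The principal obstacle is the packing step: one must simultaneously ensure pairwise disjoint geodesic convex hulls across the $E_k$'s---which is delicate when $\supp\mu$ is dense on $\S$ or when the $\partial E_k$ are geometrically irregular---and that every atom lies in the interior of one ball or in the buffer, so that it can be unambiguously assigned to a source-target pair fed into \Cref{prop: interpolation.neural.ode}. Both issues are handled by refining the quantization net $\{y_k\}$ (which only improves the quantization error) and shrinking the balls inward inside each $E_k$, since any finite Borel measure on the compact set $\S$ can be approximated from within by a finite union of balls with arbitrary precision.
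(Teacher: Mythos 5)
Your overall architecture mirrors the paper's: approximate $\uppsi$ by a step function, split $\mu$ into atomic and diffuse parts, cluster the diffuse mass to finitely many anchors via \Cref{prop: compression}, and then steer anchors and atoms to the prescribed values with \Cref{prop: interpolation.neural.ode}, composing the flows into $\uppsi_\varepsilon$. You also correctly handle the atoms by choosing the packing to avoid them (so the tubular invariance keeps them fixed), which is a reasonable alternative to the paper's extra step $\upphi_1$ that moves atoms first.

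The genuine gap is in your quantization. You partition by Voronoi \emph{preimages} $E_k=\uppsi^{-1}(V_k)$, and then assert that, after refining the net and shrinking the packed balls inward, the geodesic convex hulls $\conv_g B_k$ can be made pairwise disjoint. That is false in general. The preimages $E_k$ can be arbitrarily disconnected, and their components can be interleaved: take $\uppsi$ equal to $y_1$ on two antipodal caps and to $y_2$ on the equatorial band between them; any packing $B_1$ retaining mass in both caps has $\conv_g B_1$ crossing the band, hence meeting $\conv_g B_2$, no matter how much you shrink the balls or refine the net (refining the net does nothing to reconnect $E_k$). The obstruction is topological, not a boundary effect, so ``shrinking near $\partial E_k$'' cannot remove it. Without disjoint hulls you cannot apply \Cref{prop: compression} as stated to the family $\{\mu_d|_{B_k}\}_k$, and even if you split each $E_k$ into connected components and cluster each separately, several cluster anchors then share the same target $y_k$, which violates the $y^i\neq y^j$ hypothesis of \Cref{prop: interpolation.neural.ode}. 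The paper sidesteps both problems by partitioning the \emph{domain} $\S$ into finitely many \emph{connected} pieces $\Omega_m$ with \emph{pairwise distinct} targets $y_m^\varepsilon$ (obtained from a Lusin-plus-uniform-continuity argument on a compact set of nearly full $\mu$-measure), so that each diffuse piece $\mu_m$ sits in a single connected region and can be clustered to its own anchor, and the anchor-to-target interpolation data is automatically a valid input to \Cref{prop: interpolation.neural.ode}. Your argument can be repaired along those lines—split each $E_k$ into the finitely many components carrying significant mass and perturb the repeated targets to be distinct—but the fix you actually describe does not work.
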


The proof of \Cref{lem: univ.approx} is involved, so we postpone it to {\bf \Cref{proof: univ.approx}}.

\subsection{\texorpdfstring{Proof of \Cref{thm: main.result}}{Proof of Theorem}}

We provide \emph{two} proofs: we first provide the proof in full generality, followed by a simpler proof that doesn't rely on \Cref{lem: univ.approx}, under stronger structural assumptions on the input and target measures.

\begin{proof}[Proof of \Cref{thm: main.result} (general case)]

We split the proof in three steps.

\subsubsection*{Step 1. Disentanglement} 

We begin by rendering the supports of the initial measures $(\mu_0^i)_{i\in\llbracket1,N\rrbracket}$ (resp. the target measures $(\mu_1^i)_{i\in\llbracket1,N\rrbracket}$) pairwise disjoint by applying \Cref{prop: separation} to \eqref{eq: cauchy.pb} with data $\mu_0^i$ at time $t=0$ (resp. $\mu_1^i$ at time $t=2T/3$) for all $i$. 
This entails the existence of two parameterized flow maps: $\Phi_{\theta_1}^{t}:\mathscr{P}(\S)\mapsto \mathscr{P}(\S)$ for $t\in[0,T/3]$, and $
    \Phi_{\theta_3}^{t}:\mathscr{P}(\S)\mapsto \mathscr{P}(\S)$
for $t\in[2T/3, T]$, induced by \eqref{eq: cauchy.pb}, which are such that
\begin{equation*}
    \supp\,\Phi_{\theta_1}^{\frac{T}{3}}(\mu_0^i)\cap \supp\,\Phi_{\theta_1}^{\frac{T}{3}}(\mu_0^j)=\varnothing\hspace{1cm}\text{ if }i\neq j,
\end{equation*}
and 
\begin{equation*}
    \supp\,\Phi_{\theta_3}^{T}(\mu_1^i)\cap \supp\,\Phi_{\theta_3}^{T}(\mu_1^j)=\varnothing\hspace{1cm}\text{ if }i\neq j.
\end{equation*}
Since \eqref{eq: cauchy.pb} is well-posed and time-reversible, 
we further gather that there exists some constant $C=C(T,\theta_3)>0$ such that
\begin{equation} \label{eq: continuity.inverse.flow}
\mathsf{W}_2\left((\Phi_{\theta_3}^{T})^{-1}(\mu),(\Phi_{\theta_3}^{T})^{-1}(\nu)\right)\leq C\cdot\mathsf{W}_{2}(\mu,\nu)
\end{equation}
holds for any $\mu,\nu\in \mathscr{P}(\S)$.

\subsubsection*{Step 2. Matching}

By \Cref{lem: hyp.propagation} there exists a Lipschitz-continuous, invertible $\uppsi:\S\to \S$ with 
\begin{equation*}
    \uppsi\Big|_{\supp\,\Phi^{\frac{T}{3}}_{\theta_1}(\mu_0^i)}=\uppsi^i\Big|_{\supp\,\Phi^{\frac{T}{3}}_{\theta_1}(\mu_0^i)},
\end{equation*}
for $i\in\llbracket1,N\rrbracket$ where $\uppsi^i:\S\to\S$ satisfies 
\begin{equation} \label{eq: first.psi}
    \uppsi^i_\# \Phi^{\frac{T}{3}}_{\theta_1}(\mu_0^i)=\Phi^{\frac{T}{3}}_{\theta_3}(\mu_1^i).
\end{equation}
Consider $\mu=\sum_{i=1}^N \Phi_{\theta_1}^{\frac{T}{3}}(\mu_0^i),$
and use \Cref{lem: univ.approx} to find a flow map $\uppsi_\varepsilon:\S\to\S$ such that
\begin{equation} \label{eq: second.psi}
\left\|\uppsi^i\Big|_{\supp\,\Phi_{\theta_1}^{\frac{T}{3}}(\mu_0^i)}-\uppsi_\varepsilon\Big|_{\supp\,\Phi_{\theta_1}^{\frac{T}{3}}(\mu_0^i)}\right\|_{L^2(\Phi_{\theta_1}^{\frac{T}{3}}(\mu_0^i))}\leq\|\uppsi-\uppsi_\varepsilon\|_{L^2(\mu)}\leq\frac{\varepsilon}{C}
\end{equation}
for $i\in\llbracket1,N\rrbracket$. 
\Cref{lem: univ.approx} yields  $\Phi_{\theta_2}^{t}:\mathscr{P}(\S)\to \mathscr{P}(\S)$
for $t\in[T/3,2T/3]$, induced by \eqref{eq: cauchy.pb} and defined, for $\nu\in \mathscr{P}(\S)$, as $
    \Phi_{\theta_2}^{2T/3}(\nu)=(\uppsi_\varepsilon)_\#\nu,$
which by virtue of \eqref{eq: first.psi}, \eqref{eq: second.psi} and \Cref{lem: monge}, satisfies
\begin{equation}\label{eq: gluing.flow}
    \mathsf{W}_2\left((\Phi_{\theta_2}^{\frac{2T}{3}}\circ \Phi_{\theta_1}^{\frac{T}{3}})(\mu_0^i),\Phi_{\theta_3}^{T}(\mu_1^i)\right)\leq\frac{\varepsilon}{C}
\end{equation}
for all $i\in\llbracket1,N\rrbracket$.

\subsubsection*{Step 3. Continuity}

We apply the inverse of $\Phi_{\theta_3}^{T}$ to conclude that $\Phi_{\text{fin}}^T\coloneqq(\Phi_{\theta_3}^{T})^{-1}\circ\Phi_{\theta_2}^{2T/3}\circ \Phi_{\theta_1}^{T/3}$
satisfies
\begin{align*}
\mathsf{W}_2\left(\Phi_{\text{fin}}^T (\mu_0^i),\mu_1^i\right) &= \mathsf{W}_2\left(\Phi_{\text{fin}}^T (\mu_0^i), ((\Phi_{\theta_3}^T)^{-1}\circ\Phi_{\theta_3}^T) (\mu_1^i)\right)\\
    &\overset{\eqref{eq: continuity.inverse.flow}}{\leq}  C\cdot\mathsf{W}_2\left((\Phi_{\theta_2}^{\frac{2T}{3}}\circ \Phi_{\theta_1}^{\frac{T}{3}}) (\mu_0^i),\Phi_{\theta_3}^T (\mu_1^i)\right)\overset{\eqref{eq: gluing.flow}}{\leq}\varepsilon,
\end{align*}
for all $i\in\llbracket1,N\rrbracket$, as desired.
\end{proof}

We now provide a different proof under the assumption that the input measures are absolutely continuous, and the targets are empirical measures with $M$ atoms. The advantage of this proof is that it provides an explicit estimate on the number of parameter switches.

\begin{proof}[Proof of \Cref{thm: main.result} (restricted case)] \label{proof: proof.restricted}
We assume that the target measures $\mu_1^i$ are all empirical measures with $M \geq 2$ atoms: $\mu_1^i = \frac{1}{M}\sum_{m=1}^M \updelta_{y_m^i},$
for some $y_m^i \in \S$. The input measures $\mu_0^i$ are assumed to be absolutely continuous with respect to the normalized Lebesgue measure, in addition to satisfying \eqref{eq: assumption.hole}. Under these assumptions, the following proof
is very similar to that of \Cref{thm: targets.atoms}---it avoids the packing step of \Cref{prop: compression}, and avoids a direct application of \Cref{lem: univ.approx}, steps where the number of switches are hard to track. We split the proof in three steps.

\subsubsection*{Step 1. Disentanglement}
As before, we first disentangle the measures using \Cref{prop: separation}. Furthermore since the vector field in \eqref{eq: cauchy.pb} is Lipschitz, absolute continuity of all measures is preserved over time, and thus we find flow maps $\Phi_{\theta_1}^{t}:\mathscr{P}_{\mathrm{ac}}(\S)\mapsto \mathscr{P}_{\mathrm{ac}}(\S)$
for $t\in[0,T/5]$, and $\Phi_{\theta_5}^{t}:\mathscr{P}(\S)\mapsto\mathscr{P}(\S)$
for $t\in[4T/5, T]$, induced by the characteristics of \eqref{eq: cauchy.pb} and piecewise constant parameters having $O(d\cdot N)$ switches, which satisfy
\begin{equation*}
   \conv_g\,\supp\,\Phi_{\theta_1}^{\frac{T}{5}}(\mu_0^i)\cap \conv_g\,\supp\,\Phi_{\theta_1}^{\frac{T}{5}}(\mu_0^j)=\varnothing\hspace{1cm}\text{ if }i\neq j,
\end{equation*}
and 
\begin{equation*}
    \conv_g\,\supp\,\Phi_{\theta_5}^{T}(\mu_1^i)\cap \conv_g\,\supp\,\Phi_{\theta_5}^{T}(\mu_1^j)=\varnothing\hspace{1cm}\text{ if }i\neq j.
\end{equation*}
We label the disentangled targets as
\begin{equation} \label{eq: relabelling.target}
    \Phi_{\theta_5}^{T}(\mu_1^i)=\frac{1}{M}\sum_{m=1}^M\updelta_{\widetilde{y}_m^i}.
\end{equation}

\subsubsection*{Step 2. Clustering}
Let $\varepsilon_1>0$ to be chosen later on.
We first employ \Cref{prop: targets.atoms} to cluster the disentangled input measures: there exists a flow map
$\Phi^t_{\theta_2}:\mathscr{P}_{\text{ac}}(\S)\mapsto\mathscr{P}_{\text{ac}}(\S)$
for $t\in[T/5, 2T/5]$, which satisfies 
    \begin{equation}\label{eq: smallness.two}
        \mathrm{diam}(\supp\,(\Phi^{\frac{2T}{5}}_{\theta_2}\circ \Phi^{\frac{T}{5}}_{\theta_2})(\mu^i_0))\leq \varepsilon_1
    \end{equation}
    for all $i\in\llbracket1,N\rrbracket$. Instead of using \Cref{lem: univ.approx} to approximate arbitrary transport maps as done in Step 2 in the previous proof, we rather use \Cref{lem: two.balls} recursively to reduce the problem to an ensemble matching of points. As a consequence of Step 1 and \eqref{eq: smallness.two}, there exists some $\kappa>0$ such that
    \begin{align}\label{eq: min.dist}
    \inf_{\substack{
    x\in\conv_g\,\supp\,\nu^i,\\y\in\conv_g\,\supp\,\nu^j,\\\,i\neq j
    }}d_g(x,y)\geq 2\kappa,
    \end{align}
    where we set $\nu^i\coloneqq (\Phi^{2T/5}_{\theta_2}\circ \Phi^{T/5}_{\theta_2})(\mu^i_0).$
    We use the following.

\begin{claim}\label{cl: balls}
  There exists some small enough $\varepsilon_1>0$ such that for all $i\in\llbracket1,N\rrbracket$, the measures $\nu^i$ are such that there exist balls $B(x_m^i,r^i)$, for $m\in\llbracket1,M\rrbracket$, satisfying
  \begin{enumerate}
      \item\begin{align*}
      &\nu^i\left(B(x_m^i,r^i)\setminus B(x_{m-1}^i,r^i)\right)=\frac{1}{M}\hspace{1cm}\text{ for }m\in\llbracket2,M\rrbracket,\\
     &\nu^i\left(B(x_1^i,r^i)\right)=\frac{1}{M}.\nonumber
  \end{align*}
      \item For any $m\in\llbracket1,M-1\rrbracket$ there exists $z_m^i\in B(x_m^i,r^i)$ such that 
    \begin{equation*}
         z_m^i\notin B(x_{m'}^i,r^i)\hspace{1cm}\text{ for }m'\geq m+1.
    \end{equation*}
    \item 
    For $j\neq i$,
        \begin{equation}\label{eq: measure.zero}
         \nu^i\left(B(x_m^j,r^j)\right)=0 \hspace{1cm} \text{ for all } m\in\llbracket1,M\rrbracket.
     \end{equation}
  \end{enumerate}       
\end{claim}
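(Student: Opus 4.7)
I intend to construct the balls $B(x_m^i,r^i)$ by a one-dimensional sliding procedure, placing the centers along a geodesic through a reference point $c^i$, with a common radius barely exceeding $\mathrm{diam}(\supp\,\nu^i)$. Two facts drive the argument: $\nu^i$ is absolutely continuous, hence non-atomic (because $\mu_0^i\in\mathscr{P}_{\mathrm{ac}}(\S)$ is pushed forward by a Lipschitz flow), and $\mathrm{diam}(\supp\,\nu^i)\leq\varepsilon_1$ by \eqref{eq: smallness.two}. I fix $\varepsilon_1$ once and for all, small enough compared to the separation constant $\kappa$ in \eqref{eq: min.dist}.

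\textbf{Construction of the centers.} Fix $i$, pick $c^i\in\S$ with $\supp\,\nu^i\subset B(c^i,\varepsilon_1)$, a unit direction $e^i\in\Tan_{c^i}\S$, and set $\gamma_i(s)\coloneqq\exp_{c^i}(se^i)$, $r^i\coloneqq\varepsilon_1+\eta$ for small $\eta>0$. Put $x_M^i\coloneqq c^i$ (so $\nu^i(B(x_M^i,r^i))=1$) and recursively choose $s_{M-1}>s_{M-2}>\cdots>s_1$ by applying the intermediate value theorem to the continuous map
\[s\mapsto\nu^i\bigl(B(x_m^i,r^i)\setminus B(\gamma_i(s),r^i)\bigr),\]
which vanishes at $s=s_m$ and equals $\nu^i(B(x_m^i,r^i))$ for $|s|>r^i+\varepsilon_1$ (i.e., once the trailing ball misses $\supp\,\nu^i$). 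The recursion closes thanks to the identity $\nu^i(B(x_m^i,r^i))=m/M$: since $\supp\,\nu^i\subset B(x_M^i,r^i)$ and, for three ordered points $x_a,x_b,x_c$ on the geodesic, the spherical analog of Euclidean convexity gives $B(x_a,r^i)\cap B(x_c,r^i)\subset B(x_b,r^i)$, the region $B(x_{m-1}^i,r^i)\setminus B(x_m^i,r^i)$ is $\nu^i$-null, so $\nu^i(B(x_{m-1}^i,r^i))=(m-1)/M\geq 1/M$, and item (1) follows at $m=1$.

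\textbf{The remaining items.} For (2), set $z_m^i\coloneqq\gamma_i(s_m-r^i+\delta)$ with $\delta<\tfrac12\min_m(s_{m+1}-s_m)$: then $d_g(z_m^i,x_m^i)=r^i-\delta<r^i$, while $d_g(z_m^i,x_{m'}^i)=(s_{m'}-s_m)+r^i-\delta>r^i$ for $m'\geq m+1$. For (3), the construction forces $|s_m|\leq r^i+\varepsilon_1=O(\varepsilon_1)$, so each $x_m^j\in B(c^j,O(\varepsilon_1))$; combined with $\supp\,\nu^i\subset B(c^i,\varepsilon_1)$ and \eqref{eq: min.dist}, every $y\in\supp\,\nu^i$ satisfies $d_g(y,x_m^j)\geq 2\kappa-O(\varepsilon_1)>r^j$ as soon as $\varepsilon_1$ is taken small enough, yielding $B(x_m^j,r^j)\cap\supp\,\nu^i=\varnothing$.

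\textbf{Main obstacle.} The one delicate ingredient is the inductive mass identity, which rests on the geodesic-ball overlap property $B(x_a,r^i)\cap B(x_c,r^i)\subset B(x_b,r^i)$ for three ordered points on the geodesic. This is the spherical analog of convexity of the squared Euclidean distance and holds on the small scales our construction operates at; everything else is routine (IVT by absolute continuity, and (3) forced by the outer choice $\varepsilon_1\ll\kappa$).
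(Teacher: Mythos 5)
Your proof is correct, and it takes a genuinely different geometric route from the paper's. The paper's proof also uses a geodesic sweep and the intermediate value theorem (powered by absolute continuity of $\nu^i$), but with a very different choice of scale: it fixes the endpoint $x_M^i$ in $\conv_g\,\supp\,\nu^i$, places the other endpoint $x_1^i$ on $\partial B(x_M^i,\kappa/2)$, and chooses a common \emph{large} radius $r^i\approx\kappa/2$ via IVT so that $\nu^i(B(x_1^i,r^i))=1/M$; it then slides the center inward, using that $f(s,r^i)=\nu^i(B(\gamma(s),r^i))$ is continuous and monotone increasing to pick the intermediate centers. Your construction instead uses \emph{tiny} balls of radius $r^i=\varepsilon_1+\eta$, starts with the ball that already captures all the mass, and subtracts by sliding outward. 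The two constructions are duals of each other, and both hinge on the same geometric fact, just phrased differently: your stated inclusion $B(x_a,r)\cap B(x_c,r)\subset B(x_b,r)$ for geodesically-ordered $x_a,x_b,x_c$ is exactly what underlies the paper's (unproved) assertion that $f(\cdot,r^i)$ is monotone. That inclusion does hold here, since for a geodesic $\gamma$ and fixed $y\in\S$ the function $t\mapsto d_g(\gamma(t),y)$ is unimodal with a unique minimum and a unique maximum (at the antipode) per period; at the $O(\varepsilon_1)$ scales of your construction the antipodal maximum cannot lie inside the relevant parameter interval whenever $y$ is in one of the two balls, so the max is attained at an endpoint. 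Two small points worth tightening if you write this up: you should say explicitly that you choose the IVT root $s_{m-1}$ on the side $s<s_m$ so that $s_1<\dots<s_{M-1}<s_M=0$ (the IVT gives roots on both sides), and the continuity of $s\mapsto\nu^i(B(x_m^i,r^i)\setminus B(\gamma_i(s),r^i))$ requires genuine absolute continuity of $\nu^i$ (so that spheres are $\nu^i$-null), not merely non-atomicity—you have it, but the parenthetical phrasing underplays which property is doing the work.
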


We postpone the proof of \Cref{cl: balls} to after the present one.
Fix $i\in\llbracket1,N\rrbracket$. Applying \Cref{lem: two.balls} $M$ times successively using the balls stemming from \Cref{cl: balls} and $z_m^i$ in place of $\omega$, we obtain $M$ Lipschitz-continuous invertible flow maps $\upvarphi_m^i:\S\to\S$ of \eqref{eq: neural.ode.sphere} corresponding to constant parameters, such that $\uppsi^i\coloneqq\upvarphi_M^i\circ\upvarphi_{M-1}^i\circ\cdots \circ\upvarphi_1^i,$
  because of \eqref{eq: measure.zero}, satisfies
     \begin{equation}\label{eq: identity.map}
         \uppsi^i_{\#}\nu^j=\nu^j \hspace{1cm}\text{ for }j\neq i,
     \end{equation}
     as well as
     \begin{equation}\label{eq: multi.cluster}
\mathsf{W}_2\left(\uppsi^i_{\#}\nu^i,\alpha^i\right)\leq \varepsilon
     \end{equation}    
     where $
         \alpha^i=\frac{1}{M}\sum_{m} \updelta_{z_m^i}.$
     Due to \eqref{eq: identity.map} and \eqref{eq: multi.cluster}, the map
$\uppsi\coloneqq\uppsi^N\circ\uppsi^{N-1}\circ\cdots\circ\uppsi^1$
     is a flow map of \eqref{eq: neural.pde.sphere} induced by parameters having $O(M\cdot N)$ switches, and satisfying $\mathsf{W}_2(\uppsi_{\#}\nu^i,\alpha^i)\leq \varepsilon$ for all $i\in\llbracket1,N\rrbracket$. All in all, the flow map
     \begin{align*}
        \Phi^{\frac{3T}{5}}_{\theta_3}:\mathscr{P}(\S)&\mapsto\mathscr{P}(\S)\\
         \Phi^{\frac{3T}{5}}_{\theta_3}(\mu)&=\uppsi_\#\mu
     \end{align*}
     is such that
     \begin{equation}\label{eq: third.flow}
         \mathsf{W}_2\left((\Phi^{\frac{3T}{5}}_{\theta_3}\circ \Phi^{\frac{2T}{5}}_{\theta_3}\circ\Phi^{\frac{T}{5}}_{\theta_3})\mu_0^i,\alpha^i\right)\leq \varepsilon
     \end{equation}
     for all $i\in\llbracket1,N\rrbracket$.

     \subsubsection*{Step 3. Matching} 
     We apply\footnote{Should the assumptions of \Cref{prop: interpolation.neural.ode} not hold, we consider a slight perturbation of the target measures ($\mathsf{W}_2(\mu_1^i,\tilde{\mu}_1^i)\leq \varepsilon$).} \Cref{prop: interpolation.neural.ode} to
     \begin{equation} \tag{$\mathscr{D}$}
     (z_m^i, \widetilde{y}_m^i)\in\S\times\S \hspace{1cm} \text{ for } (i, m)\in\llbracket1,N\rrbracket\times\llbracket1,M\rrbracket,
     \end{equation}
     with $\widetilde{y}_m^i$ as in \eqref{eq: relabelling.target}. This yields a flow map $\upphi:\S\to\S$ of \eqref{eq: neural.ode.sphere} induced by piecewise constant parameters with $O(M\cdot N)$ switches satisfying $\upphi(z_m^i)=\widetilde{y}_m^i$
     for all $(i, m)$. 
     Define
    \begin{equation*}
         \Phi^{\frac{4T}{5}}_{\theta_4}(\mu)\coloneqq\upphi_\#\mu.
     \end{equation*}
     Using the triangle inequality, the definition of $\alpha^i$ in Step 2, and the continuity of the solution to \eqref{eq: neural.ode.sphere} with respect the initial conditions and \eqref{eq: third.flow}, we find 
     \begin{equation*}
\mathsf{W}_2\left((\upphi\circ\uppsi)_{\#}\nu^i,\Phi_{\theta_5}^T(\mu_1^i)\right)\lesssim_{M, N}\varepsilon
     \end{equation*}
     for all $i$ where the implicit constant is independent of $\varepsilon$. This yields
               \begin{equation*} 
         \mathsf{W}_2\left((\Phi_{\theta_4}^{\frac{4T}{5}}\circ\Phi^{\frac{3T}{5}}_{\theta_3}\circ \Phi^{\frac{2T}{5}}_{\theta_2}\circ\Phi^{\frac{T}{5}}_{\theta_1})(\mu_0^i),\Phi_{\theta_5}^{T}(\mu_1^i)\right)\lesssim_{N, M}\varepsilon
     \end{equation*}
     for all $i$.
     The conclusion follows by applying the inverse of $\Phi^T_{\theta_5}$ as in the previous proof. Pasting the parameters used in all of the steps above, the resulting number of switches is $O((d+M)N)$. 
\end{proof}

\begin{proof}[Proof of \Cref{cl: balls}]
Fix $i\in\llbracket1,N\rrbracket$. Due to \eqref{eq: smallness.two}, we have
    \begin{equation}\label{eq: boundary.ball}
        \supp\,\nu^i\subset B(x_{M}^i,C\varepsilon_1)
    \end{equation}
    for some $x_{M}^i\in \conv_g\,\supp\,\nu^i$ and $C>0$. Take $\varepsilon_1$ small enough so that
    \begin{equation}\label{eq: bound}
        \kappa\geq 4C\varepsilon_1.
    \end{equation}
    Take 
    $x_1^i\in \partial B(x_M^i,\kappa/2).$
    Consider the minimizing geodesic $\gamma:[0,1]\to \S$ between $x_1^i$ and $x_{M}^i$, and the function
    \begin{align*} 
        f:[0,1]\times [0,\pi]&\mapsto [0,1]\nonumber\\
        (s,r)&\mapsto f(s,r)=\nu^i\left(B(\gamma(s),r)\right).
    \end{align*}
    Since $\nu^i$ is absolutely continuous, we have:  1). $f\in \mathscr{C}^0([0,1]\times [0,\pi];[0,1])$: 2). $f(1,r)=1$ for all $r\geq C\varepsilon_1$; 3). $f(0,r)=0$ for all $r\leq \kappa/2 -C\varepsilon_1$: 4). $f(0,r)=1$ for all $r\geq 1/2+C\varepsilon_1.$
    By continuity, there exists $r^i\in (\kappa/2-C\varepsilon_1,\kappa/2+C\varepsilon_1)$ such that $f(0,r^i)=1/M.$ Furthermore, due to \eqref{eq: boundary.ball} and \eqref{eq: min.dist} we also have
    \begin{equation*}
        \nu^j(B(\gamma(s),r^i))=0\hspace{1cm}\text{ for }s\in[0,1],\, j\neq i\in\llbracket1,N\rrbracket.
    \end{equation*}
    Finally, $f(\cdot,r^i)$ is continuous and monotonically increasing provided $r^i\geq C\varepsilon_1$, which is guaranteed by \eqref{eq: bound}. So pick $\{s_m^i\}_{m\in\llbracket2,M-1\rrbracket}\subset(0,1)$ so that $f(s_m^i,r^i)=m/M.$
    Hence, the desired balls are
    \begin{equation*}
        B(x_m^i=\gamma(s_m^i),r^i),\hspace{1cm} \text{ with } (s_1^i, s_M^i)=(0, 1).
    \end{equation*}
    Since for fixed $i$ all balls have the same radius, the existence of $z_m^i$ is immediate.
    \end{proof}

\subsection{\texorpdfstring{Proof of \Cref{thm: targets.atoms}}{Proof of Theorem}}

\begin{proof}[Proof of \Cref{thm: targets.atoms}]

The proof follows the same ideas as that of \Cref{thm: main.result}, but is significantly simpler since some steps can be omitted completely. Indeed, we can consider $\Phi^T_{\text{fin}}\coloneqq\Phi^{T}_{\theta_3}\circ\Phi^{\frac{2T}{3}}_{\theta_2}\circ\Phi^{\frac{T}{3}}_{\theta_1},$
where 
\begin{itemize}
    \item $\Phi^t_{\theta_1}:\mathscr{P}(\S)\to\mathscr{P}(\S)$ for $t\in[0,T/3]$ is provided by \Cref{prop: separation};
    \item $\Phi^{t}_{\theta_2}:\mathscr{P}(\S)\to\mathscr{P}(\S)$ for $t\in[T/3, 2T/3]$ is provided by \Cref{prop: targets.atoms}, which can be applied since $\Phi^{\frac{T}{3}}(\mu_0^i)$ are pairwise disjoint and supported in a single hemisphere for all $i$;
    \item $\Phi^t_{\theta_3}$ for $t\in[2T/3,T]$ is provided by \Cref{prop: interpolation.neural.ode}.
\end{itemize}
To conclude, we derive the bound on the norm of $\theta$. 

\begin{enumerate}
    \item In the proof of \Cref{prop: separation}, one sees that $
        \|\theta_1\|_{L^\infty((0,T);\Uptheta)}\lesssim dN/T$
    where the implicit constant depends only on the supports of the initial measures.
    
    \item Once the measures are disentangled, we further cluster them before using \Cref{prop: interpolation.neural.ode}. 
    By \Cref{prop: targets.atoms} we deduce   $\mathsf{W}_2(\mu^i(T_\delta),\updelta_{x_0^i})\leq \delta$ for every $i$ with $T_\delta=O(\log1/\delta)$, which implies $\|\theta_2\|_{L^\infty((0,T);\Uptheta)}\lesssim \log1/\delta.$

\item Finally, we apply \Cref{prop: interpolation.neural.ode} for the ensemble of atoms $x_0^i$: since all measures are $\delta$-close to $\updelta_{x_0^i}$, we have
$\mathsf{W}_2(\mu^i(T),\updelta_{y^i})\leq e^{O(N\cdot T)}\delta$ at cost $\|\theta_3\|_{L^\infty((0,T);\Uptheta)}\lesssim N/T$.
\end{enumerate}
All in all, $\mathsf{W}_2(\mu^i(T),\updelta_{y^i})\leq \varepsilon$ with  $\|\theta\|_{L^\infty((0,T);\Uptheta)}=O\left(d\cdot N/T+\log1/\varepsilon\right)$.
\end{proof}

\section{Complexity of disentanglement} \label{sec: complexity.untanglement}

As shown above, a Transformer can disentangle the supports of $N$ probability measures on $\S$ using piecewise-constant parameters with $O(d\cdot N)$ switches. The linear dependence on $N$ arises from separating one measure at a time; we believe this is suboptimal. A sharper understanding of the $\omega$--limit set of \eqref{eq: cauchy.pb}--\eqref{eq: vf} for $\bV\equiv I_d$, $\bW\equiv0$ and constant $\bB$ could reduce this to $O(1)$ switches. Indeed, if for given data $\{\mu_0^i\}_{i\in\llbracket1,N\rrbracket}$ there exists a constant $\bB$ and distinct points $z_1,\dots,z_N\in\S$ such that $\mathsf{W}_\infty(\mu^i(T),\updelta_{z_i})\to0$ as $T\to+\infty,$  then a single constant parameter suffices to disentangle all measures. Characterizing $z_i$ from $\mu_0^i$ in general appears difficult.

\begin{example} \label{ex: symmetry}
Let $d=2$ and $\mu_0^i(\diff x)=|\mathscr{B}_i|^{-1}\mathbf{1}_{\mathscr{B}_i}\,\diff x$, with $\mathscr{B}_i\subsetneq\mathbb{S}^1$ connected, and distinct barycenters, with directions $u_i\in\mathbb{S}^1$ (unit vector pointing to the midpoint angle $\theta_i$).
With $(\bV,\bB,\bW)\equiv(I_d,0,0)$ attention reduces to 
$m_i(t)\coloneqq\int x\mu_t^i(\diff x)$ and the field is
$\mathsf v[\mu^i(t)](x)=\proj_x m_i(t)$.
Let $\mathsf{R}_i$ be the reflection across the axis $\mathbb R u_i$. Since $\mu_0^i$ is uniform on the symmetric arc $\mathscr B_i$,
$\mathsf{R}_{i\#}\mu_0^i=\mu_0^i$, hence $m_i(0)\parallel u_i$. The vector field is $\mathsf{R}_i$-equivariant, so $\mathsf{R}_{i\#}\mu_t^i=\mu_t^i$ for all $t\geq0$, hence
$m_i(t)=\alpha_i(t)u_i$ with $\alpha_i(t)\ge0$. 
Write $x(\theta)=(\cos\theta,\sin\theta)$ and parametrize angles by $\vartheta\coloneqq\theta-\theta_i$.
For any particle following a characteristic of $\mu^i(t)$, we have
$\dot x(t)=\proj_{x(t)}(m_i(t))=\alpha_i(t)\proj_{x(t)}(u_i)$ as well as $\dot\vartheta(t)=-\alpha_i(t)\sin\vartheta(t)$.
The Lyapunov function $\mathsf{E}(\vartheta)\coloneqq1-\cos\vartheta$ satisfies $\dot{\mathsf{E}}=\sin\vartheta\dot\vartheta=-\alpha_i(t)\sin^2\vartheta\le0$, with equality only at $\vartheta=0$ (since $\alpha_i(t)>0$ as the arc is connected and strictly within the circle).
Thus $\vartheta(t)\to 0$ and each trajectory converges to $u_i$; hence $\mu^i(t)\rightharpoonup\delta_{u_i}$.
Since the $u_i$ are distinct, the limits are distinct, and disentanglement holds with the single constant parameter choice above.
\end{example}

Beyond \Cref{ex: symmetry}, we obtain $O(1)$ switches for generic discrete inputs.

\begin{proposition}\label{prop:generic-discrete}
Let $d\ge3$, $\pi=\mathrm{Unif}((\S)^n)$, and sample $\mu_0^1,\dots,\mu_0^N$ i.i.d.\ from
\[
\mathscr{S}_n\coloneqq\left\{\frac1n\sum_{k=1}^n\updelta_{x_k}:x_k\in\S\right\}\simeq (\S)^n.
\]
Consider \eqref{eq: cauchy.pb}--\eqref{eq: vf} with $\bB\equiv\beta I_d$ ($\beta\ge0$), $\bV\equiv I_d$, $\bW\equiv0$. Let $x_*^i$ be the (a.s.\ existing) limit cluster point of $\mu^i(t)$. Then $\mathbb{P}\big[x_*^i\neq x_*^j\big]=1$ for $i\neq j$.
\end{proposition}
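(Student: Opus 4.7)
The plan is to prove, by an $O(d)$-symmetry argument, that the (a.s.\ defined) map $\Psi:\mathscr{S}_n\to\S$, $\mu_0\mapsto x_*(\mu_0)$, has every fibre of $\pi$-measure zero. Once this is granted, independence of $\mu_0^i$ and $\mu_0^j$ combined with Fubini immediately gives $\mathbb{P}[x_*^i=x_*^j]=0$ for any fixed pair $i\neq j$, and a union bound over the $\binom{N}{2}$ pairs closes the argument.

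The first step would be to verify that $\Psi$ is $O(d)$-equivariant. Because $\bB\equiv\beta I_d$, $\bV\equiv I_d$ and $\bW\equiv0$, the vector field in \eqref{eq: vf} commutes with rotations: for any $R\in O(d)$, the symmetry $\langle\beta Rx,Rx'\rangle=\langle\beta x,x'\rangle$ gives $\mathscr{A}_{\beta I_d}[R_\#\mu](Rx)=R\,\mathscr{A}_{\beta I_d}[\mu](x)$, while $\proj_{Rx}=R\proj_x R^\top$. Hence $\mathsf{v}[R_\#\mu](Rx)=R\,\mathsf{v}[\mu](x)$, so the characteristic flow satisfies $\Phi^t(R_\#\mu_0)=R_\#\Phi^t(\mu_0)$, and the $t\to+\infty$ cluster point inherits the equivariance $\Psi(R_\#\mu_0)=R\,\Psi(\mu_0)$.

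With this in hand, the pushforward $\Psi_*\pi$ is a Borel probability measure on $\S$ that is $O(d)$-invariant, since $\pi=\mathrm{Unif}((\S)^n)$ is invariant under the diagonal action of $O(d)$. By uniqueness of the Haar probability measure on $\S$, $\Psi_*\pi$ coincides with the normalized surface measure $\sigma$, which is atomless. Thus $\pi(\Psi^{-1}(c))=(\Psi_*\pi)(\{c\})=\sigma(\{c\})=0$ for every $c\in\S$. By Fubini applied to the independent pair $(\mu_0^i,\mu_0^j)$,
\[
\mathbb{P}\bigl[\Psi(\mu_0^i)=\Psi(\mu_0^j)\bigr]=\int\pi\bigl(\Psi^{-1}(\Psi(\mu_0^i))\bigr)\,\pi(\diff\mu_0^i)=0,
\]
and the union bound yields the claim.

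The main obstacle I anticipate is justifying that $\Psi$ is well-defined $\pi$-a.s.\ and Borel measurable---both are required for the pushforward step to make sense. Existence of the cluster limit is precisely the ``a.s.\ existing'' hypothesis in the statement, and rests on the long-time clustering results for the self-attention flow with $\bB\equiv\beta I_d$, $\bV\equiv I_d$ (cf.\ \cite{geshkovski2023mathematical, geshkovski2024dynamic, criscitiello2024synchronization, geshkovski2024emergence}), which establish convergence to a single Dirac for $\pi$-a.e.\ configuration. Measurability of $\Psi$ then follows from continuous dependence of the flow on initial data: for each $t\geq0$, $\mu_0\mapsto\int z\,\mu(t;\mu_0)(\diff z)$ is continuous in $\mu_0$, and $\Psi(\mu_0)$ is the pointwise $\pi$-a.e.\ limit of such maps as $t\to+\infty$; extending by any fixed constant on the exceptional null set produces a Borel map. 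The rest is formal.
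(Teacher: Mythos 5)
Your proof is correct and follows essentially the same approach as the paper: both use rotation-equivariance of the limit map $\Psi$ together with rotation-invariance of $\pi$ to conclude that the law of $x_*^i$ has no atoms, then finish via independence. The only (minor) difference is that you identify the pushforward $\Psi_*\pi$ as exactly the normalized surface measure by uniqueness of the $O(d)$-invariant probability on $\S$, whereas the paper establishes non-atomicity more directly by noting that an atom of positive mass would force uncountably many atoms of equal mass---both routes are clean and yield the same conclusion.
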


\begin{proof}[Sketch of proof]
By \cite[Sec.~6]{geshkovski2023mathematical}, for $\pi$-a.e. $\mu\in\mathscr S_n$ the solution to \eqref{eq: cauchy.pb}--\eqref{eq: vf} converges to a Dirac $\delta_{X}$ with $X\in\S$. Define $f:\mathscr S_n\to\S$ as $f(\mu)=X$ with $\mu(t)\to\delta_X$.
The flow is deterministic and rotation-invariant, and $\pi$ is rotation-invariant, hence $f(\mathsf{R}_\#\mu)=\mathsf{R}f(\mu)$ and $\mathsf{R}_\#\pi=\pi$ for all $\mathsf{R}\in\mathbb{O}(d)$.
Let $\nu\coloneqq f_\#\pi$ be the law of $X$. If $\nu$ had an atom at $x_0$ of mass $p>0$, then by equivariance $\mathsf{R} x_0$ would also be an atom of mass $p$ for every rotation $\mathsf{R}$. Picking $M$ rotations with distinct points yields $\nu(\S)\ge Mp$, a contradiction for large $M$. Thus $\nu$ is non-atomic.

Now take two independent initial conditions $\mu_0^i,\mu_0^j\sim\pi$, and let $X_i=f(\mu_0^i)$, $X_j=f(\mu_0^j)$. Since the flow is deterministic, $X_i,X_j$ are independent with common law $\nu$. Non-atomicity gives
\[
\mathbb{P}[X_i=X_j]=\int\nu(\{x\})\,\mathrm d\nu(x)=0,
\]
so $\mathbb{P}[x_*^i\neq x_*^j]=1$.
\end{proof}

The main bottleneck in switch complexity arises in \Cref{prop: compression}, which relies solely on the perceptron part of the vector field; adding width could parallelize this step. It is plausible that nonlinear self-attention effects further reduce switches, e.g.\ via \emph{dynamic metastability} \cite{geshkovski2024dynamic, bruno2024emergence}. Such results may close the gap between the $O(d\cdot N)$ bound and the $O(1)$ one suggested by the $\omega$–limit set heuristic above.

\appendix

\section{\texorpdfstring{On condition \eqref{eq: assumption.hole}}{Proof of Lemma}}
\label{appendix: technical}

\begin{lemma} \label{lem: mass.concentration.Q1}
Let $\nu_0^i\in \mathscr{P}(\S)$, $i\in\llbracket1,N\rrbracket$ and $\eta>0$. 
Suppose there exists a Lipschitz-continuous and invertible $\upphi_\eta:\S\to\S$ such that
\begin{equation}\label{eq: mass.first.quadrant}
    \upphi_{\eta\#} \nu_0^i(\Q_1^{d-1})=1-\eta
\end{equation}
for all $i$. Then for all $i$ there exists $\mu_0^i\in\mathscr{P}(\Q_1^{d-1})$ and a universal constant $C>0$ such that
$\mathsf{W}_2(\upphi_{\eta\#}\nu_0^i, \mu_0^i)\leq C\eta.$
\end{lemma}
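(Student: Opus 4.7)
The plan is to decompose $\sigma_i\coloneqq \upphi_{\eta\#}\nu_0^i$ into its restriction to $\Q_1^{d-1}$ and the remaining $\eta$-mass lying outside, and to absorb the exceptional mass into $\Q_1^{d-1}$ via a simple measurable retraction to a fixed anchor. Since all the choices are made once and for all, independently of $i$, the resulting constant will be universal.

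First I would fix an anchor $\bar z\in\Q_1^{d-1}$ (for example the normalized barycenter $d^{-1/2}(1,\ldots,1)$) and define a measurable map $T\colon\S\to\Q_1^{d-1}$ by $T|_{\Q_1^{d-1}}=\mathrm{id}$ and $T\equiv\bar z$ on $\S\setminus\Q_1^{d-1}$. Setting $\mu_0^i\coloneqq T_\#\sigma_i$, we get $\mu_0^i\in\mathscr{P}(\Q_1^{d-1})$, with the explicit splitting
$$\mu_0^i = \sigma_i|_{\Q_1^{d-1}} + \eta\,\updelta_{\bar z}.$$
Measurability of $T$ is immediate since $\Q_1^{d-1}$ is open and the anchor does not depend on~$i$.

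The Wasserstein estimate would then follow from the admissible coupling $(\mathrm{id},T)_\#\sigma_i\in\Pi(\sigma_i,\mu_0^i)$ together with the hypothesis $\sigma_i(\S\setminus\Q_1^{d-1})=\eta$:
$$\mathsf{W}_2^2(\sigma_i,\mu_0^i) \leq \int d_g(x,T(x))^2\,d\sigma_i(x) = \int_{\S\setminus\Q_1^{d-1}} d_g(x,\bar z)^2\,d\sigma_i(x) \leq \pi^2\,\eta,$$
using only that the geodesic diameter of $\S$ is $\pi$ and the transported mass is $\eta$.

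The main obstacle is the gap between the $\sqrt{\eta}$ rate produced by this one-shot retraction and the linear $\eta$ rate stated in the lemma. Since the Lipschitz hypothesis on $\upphi_\eta$ does not by itself constrain \emph{where} the exceptional $\eta$-mass lies—it could a priori be located arbitrarily far from $\Q_1^{d-1}$—one cannot bound the transport distance by $O(\eta)$ without extra information. To reach the stated $C\eta$ bound I would therefore refine the construction by replacing the constant retraction $T$ with a nearest-point projection onto $\overline{\Q_1^{d-1}}$ combined with a layered decomposition of $\S\setminus\Q_1^{d-1}$ into $\eta$-thin collars, and exploit any quantitative Lipschitz/non-degeneracy information on $\upphi_\eta$ (as is natural in the setting of \Cref{lem: first.quadrant}, where $\upphi_\eta$ arises from a controlled flow) to ensure that the preimage of each collar has mass comparable to its width. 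This is the only delicate step; the rest is bookkeeping.
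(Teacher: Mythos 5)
Your construction is essentially the paper's own proof: the paper sets $\mu_0^i(A)=\sigma_i(A\cap\Q_1^{d-1})+\sigma_i(\S\setminus\Q_1^{d-1})\,\updelta_{x_0^i}(A)$ for a chosen $x_0^i\in\Q_1^{d-1}$ (where $\sigma_i\coloneqq\upphi_{\eta\#}\nu_0^i$; the paper's proof writes $\nu_0^i$ but this is clearly a shorthand/typo), which is exactly your one-shot retraction to an anchor. The only cosmetic difference is that you use a single anchor $\bar z$ common to all $i$, while the paper allows $x_0^i$ to vary with $i$; both are fine.

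You have put your finger on a genuine imprecision in the lemma as stated. Your computation is correct: the canonical coupling $(\mathrm{id},T)_\#\sigma_i$ moves mass $\eta$ a distance at most $\pi$ and keeps the rest fixed, giving
\begin{equation*}
\mathsf{W}_2^2(\sigma_i,\mu_0^i)\leq \pi^2\,\sigma_i(\S\setminus\Q_1^{d-1})=\pi^2\eta,
\end{equation*}
so $\mathsf{W}_2(\sigma_i,\mu_0^i)\leq\pi\sqrt{\eta}$, and this rate is sharp: nothing in the hypotheses prevents the excess mass from sitting at a point at geodesic distance $\asymp 1$ from $\Q_1^{d-1}$, in which case $\mathsf{W}_2\gtrsim\sqrt{\eta}$. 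So the conclusion of \Cref{lem: mass.concentration.Q1} should read $C\sqrt{\eta}$ rather than $C\eta$. This is a harmless slip: in the only place the lemma is used (the continuity argument at the end of \Cref{appendix: technical}), all that matters is that $\mathsf{W}_2(\sigma_i,\mu_0^i)\to0$ as $\eta\to0$, which $\sqrt{\eta}$ delivers equally well.

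Given that, the elaborate patches you sketch at the end (nearest-point projection, collar decomposition, extracting quantitative non-degeneracy from $\upphi_\eta$) are unnecessary and, without further hypotheses, cannot work anyway: the exceptional mass can lie at unit distance from $\Q_1^{d-1}$ no matter how well you project, so there is no way to reach $O(\eta)$ in general. The right resolution is simply to correct the exponent in the statement, as you already implicitly derived; your proof up to and including the $\pi\sqrt{\eta}$ estimate is complete and matches the paper's intent.
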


\begin{proof}
   Consider $\mu_0^i(A)=\nu_0^i(A\cap \Q_1^{d-1})+\nu_0^i(\S\setminus\Q_1^{d-1})\updelta_{x_0^i}(A)$ for $x_0^i\in\Q_1^{d-1}$ and Borel $A\subset\S$. 
\end{proof}

\Cref{lem: first.quadrant} provides $\upphi_\eta$ satisfying \eqref{eq: mass.first.quadrant}. 
With \Cref{lem: mass.concentration.Q1} we can extend \Cref{thm: main.result} to the setting of measures whose support fill $\S$---namely, the assumption $\omega\notin\bigcup_{i}\supp\,\mu_0^i$ can be removed. 
The result then follows by a continuity argument: apply \Cref{thm: main.result} to the measures $\mu_0^i$ given by \Cref{lem: mass.concentration.Q1} to get
$\mathsf{W}_2(\mu^i(T),\nu^i(T))\lesssim_T\mathsf{W}_2(\mu_0^i,\upphi_\eta\nu_0^i)\lesssim_T \eta,$ where $\mu^i(t)$ is the solution to \eqref{eq: cauchy.pb} given by \Cref{thm: main.result} with data $\mu_0^i$, and $\nu(t)$ is the solution to \eqref{eq: cauchy.pb} with data $(\upphi_\eta)_\#\nu_0^i$.
On the other hand, we can simply approximate the targets $\mu_1^i$ by measures that directly satisfy \eqref{eq: assumption.hole}.

\section{Technical proofs}

\subsection{Transporting mass through overlapping balls} 
\label{proof:tubular.mass.movement}

\begin{lemma} \label{lem: tubular.mass.movement}
Consider $K+1$ open balls $\mathscr{B}_K,\ldots,\mathscr{B}_1,\mathscr{B}_0\subset \S$ satisfying
\begin{align*}
        \mathscr{B}_k\cap\mathscr{B}_{k-1}\neq\varnothing \hspace{1cm} &\text{ for } k\in\llbracket1,K\rrbracket\\
        \mathscr{B}_k\cap\mathscr{B}_{k'}=\varnothing \hspace{1cm} &\text{ if }|k-k'|\geq 2.
\end{align*}
Then for any $T>0$ and $\varepsilon>0$, there exist $(\bW,\bU,b):[0,T]\to\mathscr{M}_{d\times d}(\mathbb{R})^2\times\R^d$,  piecewise constant having at most $K$ switches, such that for any $\mu_0\in \mathscr{P}(\S)$, the corresponding unique solution $\mu$ to 
\begin{equation} \label{eq: neural.pde.sphere}
\begin{cases}
\partial_t\mu(t)+\dive(\proj_x\bW(t)(\bU(t) x+b(t))_+\mu(t))=0 &\text{ on } [0, T]\times\S\\
\mu(0)=\mu_0 &\text{ on } \S
\end{cases}
\end{equation}
satisfies
    \begin{equation*}
        \mu(T,\mathscr{B}_K)\geq (1-\varepsilon)^K\mu_0\left(\bigcup_{k}\mathscr{B}_k\right).
    \end{equation*}
    Moreover, $\mu(T)=\upphi^T_\#\mu_0$ for a Lipschitz-continuous, invertible map $\upphi^t:\S\to\S$ which satisfies, for all $t\in[0,T]$,
    \begin{equation}\label{eq: eq.tubular.mass.movement}
        \upphi^t(x)=x\hspace{1cm}\text{ for }x\notin \bigcup_{k} \mathscr{B}_k.
    \end{equation}
\end{lemma}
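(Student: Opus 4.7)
The plan is to perform $K$ successive ``hops'' along the chain, each implemented by a single constant-parameter piece on a subinterval of $[0,T]$, for a total of at most $K$ switches. Split $[0,T]=\bigsqcup_{k=1}^K[T_{k-1},T_k]$ with $T_0=0$, $T_K=T$. On the $k$-th piece, I deploy a scalar ReLU gate supported exactly on $\mathscr{B}_{k-1}=B(z_{k-1},R_{k-1})=\{x\in\S:\langle z_{k-1},x\rangle>\cos R_{k-1}\}$, driving mass along geodesics toward a fixed $q_k\in\mathscr{B}_{k-1}\cap\mathscr{B}_k$ (non-empty by assumption). Concretely, take $\bU_k=\mathbf{1}z_{k-1}^\top$, $b_k=-\cos(R_{k-1})\mathbf{1}$ and any $\bW_k$ with $\bW_k\mathbf{1}=\lambda_k q_k$, where $\lambda_k>0$ is an intensity to calibrate; the corresponding vector field is
\[
\proj_x\bW_k(\bU_k x+b_k)_+ = \lambda_k\,(\langle z_{k-1},x\rangle-\cos R_{k-1})_+\,\proj_x q_k,
\]
which vanishes identically on $\S\setminus\mathscr{B}_{k-1}\supseteq\S\setminus\bigcup_j\mathscr{B}_j$. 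The composed flow $\upphi^t$ is therefore Lipschitz-continuous, invertible and equal to the identity off $\bigcup_j\mathscr{B}_j$, establishing \eqref{eq: eq.tubular.mass.movement}.

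For the mass-transfer estimate, the characteristic ODE on the $k$-th piece, restricted to $\mathscr{B}_{k-1}$, reads $\dot{x}=\lambda_k g_k(x)\proj_x q_k$ with $g_k(x)=\langle z_{k-1},x\rangle-\cos R_{k-1}$. Geodesic convexity of $\mathscr{B}_{k-1}$ traps trajectories in $\overline{\mathscr{B}_{k-1}}$, while $\tfrac{d}{dt}\langle x,q_k\rangle=\lambda_k g_k(x)(1-\langle x,q_k\rangle^2)\ge 0$, combined with a linearization at the hyperbolic equilibrium $q_k\in\mathsf{T}_{q_k}\S$, yields global exponential attraction in $\mathscr{B}_{k-1}$ with rate proportional to $\lambda_k$, essentially by the same Hartman--Grobman argument used in the proof of \Cref{lem: induction.neural.ode}. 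Consequently, for any prescribed open neighborhood $V_k\subset\mathscr{B}_k$ of $q_k$, the ``slow'' shell $\{x\in\mathscr{B}_{k-1}:\upphi^{T_k-T_{k-1}}(x)\notin V_k\}$ shrinks to a neighborhood of $\partial\mathscr{B}_{k-1}$ whose Lebesgue thickness is exponentially small in $\lambda_k(T_k-T_{k-1})$; choosing $\lambda_k$ large enough then forces the $\mu_0$-mass of this shell below $\varepsilon\,\mu(T_{k-1},\mathscr{B}_{k-1})$, so that at least a $(1-\varepsilon)$-fraction of the mass sitting in $\mathscr{B}_{k-1}$ at time $T_{k-1}$ ends up in $V_k\subset\mathscr{B}_k$ at time $T_k$.

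Iterating the estimate across the $K$ hops, and using that the $k$-th hop is trivial on $\mathscr{B}_{k'}$ for $|k'-(k-1)|\ge 2$ (such balls are disjoint from $\mathscr{B}_{k-1}$, so their mass is untouched), mass accumulates in $\mathscr{B}_K$ with the claimed bound $\mu(T,\mathscr{B}_K)\ge(1-\varepsilon)^K\mu_0\!\left(\bigcup_j\mathscr{B}_j\right)$. The main obstacle is precisely the uniform-in-$\mu_0$ calibration of the per-step loss: because the ReLU gate degenerates on $\partial\mathscr{B}_{k-1}$, characteristics starting arbitrarily close to this boundary are transported arbitrarily slowly, and one has to control an exponentially thin ``stuck'' shell whose Lebesgue measure can in principle hide an arbitrary amount of atomic $\mu_0$-mass. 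The Hartman--Grobman contraction near $q_k$ is exactly the ingredient that lets one trade the shell's shrinking thickness against an $\varepsilon$-loss in $\mu_0$-mass for any $\mu_0$ that does not concentrate on the boundaries $\partial\mathscr{B}_{k-1}$; the fully general case can then be recovered by an arbitrarily small enlargement of each $\mathscr{B}_{k-1}$ inside $\bigcup_j\mathscr{B}_j$.
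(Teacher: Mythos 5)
Your proof is essentially the paper's: on each subinterval a single ReLU gate vanishing off $\mathscr{B}_{k-1}$ drives mass along geodesics toward a fixed point $q_k\in\mathscr{B}_{k-1}\cap\mathscr{B}_k$, the key estimate being the monotone increase of $\langle x,q_k\rangle$ along characteristics, iterated across the $K$ hops. The paper isolates the two-ball step as \Cref{lem: two.balls} and then closes the count by a short backward induction, while you do the iteration in place; otherwise the ingredients are identical. (Incidentally your gate sign, $\bU_k=\mathbf{1}z_{k-1}^\top$, $b_k=-\cos(R_{k-1})\mathbf{1}$, is the correct one: the paper's displayed choice $\bU=-\mathbf{1}z^\top$, $b=\cos(R)\mathbf{1}$ has a sign typo in \eqref{eq: activation.ball} and should be read as yours.)

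Where you go further than the paper is in flagging the lack of uniformity over $\mu_0$. You are right that no single intensity $\lambda_k$ (equivalently, no single $T$ after rescaling) bounds the per-step loss by $\varepsilon$ for \emph{all} $\mu_0$ simultaneously, since mass packed arbitrarily close to $\partial\mathscr{B}_{k-1}$ escapes arbitrarily slowly. But the paper's proof does not actually achieve uniformity either: in \Cref{lem: two.balls} it chooses $\delta=\delta(\mu_0,\varepsilon)$ so that $\mu_0(B(z,R-\delta))\ge(1-\varepsilon)\mu_0(\mathscr{B}_0)$ (possible for each \emph{fixed} $\mu_0$ by inner regularity) and then a time $T_\delta$ accordingly, so the parameters implicitly depend on $\mu_0$. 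The lemma's stated order of quantifiers is therefore slightly loose, but this is harmless, since in every application it is invoked for a fixed $\mu_0^i$. Your proposed repair via an ``arbitrarily small enlargement of each $\mathscr{B}_{k-1}$ inside $\bigcup_j\mathscr{B}_j$'' does not restore uniformity — there is typically no room to enlarge along the outer boundary of $\bigcup_j\mathscr{B}_j$, and enlarging can also break the pairwise-disjointness hypothesis — so the right move is simply to make the $\mu_0$-dependence of $T$ explicit, as the paper effectively does. Finally, the Hartman--Grobman linearization is overkill here: the monotonicity $\tfrac{\diff}{\diff t}\langle x,q_k\rangle\ge 0$ together with compactness of $\overline{B(z_{k-1},R_{k-1}-\delta)}$ already yields, for any $\delta>0$, a finite time after which that sub-ball lies in $\mathscr{B}_{k-1}\cap\mathscr{B}_k$, which is all the estimate needs.
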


We now focus on proving \Cref{lem: tubular.mass.movement}, itself relying on the following lemma.

\begin{lemma} \label{lem: two.balls}
    Consider two open balls $\mathscr{B}_0,\mathscr{B}_1\subset \S$ such that $\mathscr{B}_0\cap\mathscr{B}_1\neq\varnothing$.
    For any $\varepsilon>0$ and $T>0$, there exist $\bW,\bU\in\mathscr{M}_{d\times d}(\R)$ and $b\in\mathbb{R}^d$ such that for any $\mu_0\in\mathscr{P}(\S)$, the unique solution $\mu$ to \eqref{eq: neural.pde.sphere} satisfies
    \begin{equation} \label{eq: claim.statement.lemmaA1}
        \mu(T, \mathscr{B}_0\cap\mathscr{B}_1)\geq(1-\varepsilon)\mu_0(\mathscr{B}_0).
    \end{equation}
    Moreover $\mu(T)=\upphi^T_\#\mu_0$ where the Lipschitz-continuous and invertible flow map $\Phi^t:\S\to\S$ of \eqref{eq: neural.ode.sphere} satisfies, for all $t\in[0,T]$,
    \begin{equation*}
    (\upphi^t)_{\mid\S\setminus \mathscr{B}_0}\equiv\mathrm{Id}.    
    \end{equation*}
    Furthermore, for any fixed $\omega\in \mathrm{int}\,\mathscr{B}_0$ we can choose $\bW,\bU$ and $b$ so that the solution to \eqref{eq: neural.pde.sphere} satisfies $\mathsf{W}_2(\mu(T),\alpha)\leq \varepsilon$
    where $\alpha(A)=\mu_0(\mathscr{B}_0)\updelta_{\omega}(A)+\mu_0(A\setminus\mathscr{B}_0),$
    for any Borel $A\subset \S$.
\end{lemma}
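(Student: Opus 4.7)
The plan is to build a single rank-one ReLU perceptron whose ``gate'' activates exactly on $\mathscr{B}_0$ and whose output drags mass toward an anchor $z\in\mathrm{int}(\mathscr{B}_0\cap\mathscr{B}_1)$; for the concentration part I would take $z=\omega$. Writing $\mathscr{B}_0=B(x_0,R_0)$ and assuming without loss of generality that $R_0<\pi/2$ so that $\mathscr{B}_0$ is geodesically convex, the natural choice is
\[
\bU=\mathbf{1}x_0^\top,\qquad b=-\cos(R_0)\mathbf{1},\qquad \bW=\tfrac{\lambda}{d}\,z\mathbf{1}^\top,
\]
with $\lambda>0$ a scalar tuning the speed. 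This yields $(\bU x+b)_+=(\langle x_0,x\rangle-\cos R_0)_+\mathbf{1}$ and the characteristic field
\[
\mathsf{v}(x)=\lambda(\langle x_0,x\rangle-\cos R_0)_+\proj_x z.
\]
Since the gate vanishes on $\S\setminus\mathscr{B}_0$, the flow $\upphi^t$ is the identity off $\mathscr{B}_0$, which is precisely \eqref{eq: eq.tubular.mass.movement}. The ReLU being Lipschitz and $\proj_x$ being smooth, $\mathsf{v}$ is Lipschitz on $\S$, so $\upphi^t$ is Lipschitz-continuous and invertible, and $\mu(T)=\upphi^T_\#\mu_0$.

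Next I would analyse the dynamics on $\mathrm{int}\,\mathscr{B}_0$. After a positive time-reparametrization the ODE coincides with the Riemannian gradient flow of $x\mapsto d_g(x,z)$, exactly as in Step 2 of the proof of \Cref{lem: induction.neural.ode}. Geodesic convexity of $\mathscr{B}_0$ keeps every trajectory on the minimizing geodesic from its initial point to $z$, hence inside $\mathrm{int}\,\mathscr{B}_0$; LaSalle's principle together with a Hartman--Grobman linearization at the hyperbolic sink $z$ then produces $\upphi^t(x)\to z$ with exponential rate uniform on any compact $\mathscr{K}\subset\mathrm{int}\,\mathscr{B}_0$, since the gate is bounded below by a positive constant on $\mathscr{K}$.

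To extract \eqref{eq: claim.statement.lemmaA1}, I would fix $\rho>0$ with $B(z,\rho)\subset\mathscr{B}_0\cap\mathscr{B}_1$ and introduce the inner cap $\mathscr{B}_0^\delta\coloneqq\{x\in\S:\langle x_0,x\rangle\ge\cos(R_0-\delta)\}$. Rescaling $\lambda$ appropriately forces $\upphi^T(\mathscr{B}_0^\delta)\subset B(z,\rho)\subset\mathscr{B}_0\cap\mathscr{B}_1$ at the prescribed horizon $T$, whence
\[
\mu(T,\mathscr{B}_0\cap\mathscr{B}_1)\,\ge\,\mu_0\bigl(\upphi^{-T}(B(z,\rho))\bigr)\,\ge\,\mu_0(\mathscr{B}_0^\delta).
\]
Because $\mathscr{B}_0^\delta\uparrow\mathscr{B}_0$ as $\delta\to 0$, continuity from below of $\mu_0$ supplies a $\delta$ with $\mu_0(\mathscr{B}_0^\delta)\ge(1-\varepsilon)\mu_0(\mathscr{B}_0)$, closing the mass claim. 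For the concentration statement, the same construction with $z=\omega$ and $\rho\le\varepsilon$ gives $d_g(\upphi^T(x),\omega)\le\rho$ on $\mathscr{B}_0^\delta$, and the transport plan sending $\upphi^T(x)\mapsto\omega$ on $\mathscr{B}_0$ and the identity elsewhere yields $\mathsf{W}_2^2(\mu(T),\alpha)\le\rho^2+\pi^2\mu_0(\mathscr{B}_0\setminus\mathscr{B}_0^\delta)\le\varepsilon^2$.

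The main obstacle is the boundary layer $\mathscr{B}_0\setminus\mathscr{B}_0^\delta$: the gate degenerates on $\partial\mathscr{B}_0$, so trajectories initialised arbitrarily close to $\partial\mathscr{B}_0$ move arbitrarily slowly. This is what forces $\delta$, and hence the triple $(\bW,\bU,b)$, to depend on how $\mu_0$ distributes mass near $\partial\mathscr{B}_0$; the dependence is harmless in the way the lemma is consumed inside \Cref{prop: compression} (non-atomic inputs, quantization parameter $\delta$ absorbing the boundary loss), but it seems intrinsic: any construction that keeps $\upphi^t$ equal to the identity on the full complement $\S\setminus\mathscr{B}_0$ must have its vector field vanish on $\partial\mathscr{B}_0$, hence slow particles arbitrarily close to it.
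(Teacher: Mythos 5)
Your proposal takes essentially the same route as the paper's proof: a rank-one ReLU gate supported exactly on $\mathscr{B}_0$, driving mass along geodesics toward an anchor $z\in\mathrm{int}(\mathscr{B}_0\cap\mathscr{B}_1)$ (taken equal to $\omega$ for the concentration statement), combined with a boundary-layer argument for the $(1-\varepsilon)$ mass bound. Two remarks are worth making.

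First, the paper's displayed choice $\bU=-\mathbf{1}z^\top$, $b=\cos(R)\mathbf{1}$ (with $z$ the center of $\mathscr{B}_0$) produces the gate $(\cos R-\cos d_g(z,x))_+$, which is positive \emph{outside} $\mathscr{B}_0$, contradicting the paper's own claim \eqref{eq: activation.ball}. The sign you chose, $(\langle x_0,x\rangle-\cos R_0)_+$, is the one that is positive iff $x\in\mathscr{B}_0$ and thus matches the intended construction; the paper has a sign slip there.

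Second, for the $\mathsf{W}_2(\mu(T),\alpha)\le\varepsilon$ estimate you exhibit a coupling directly (send $\upphi^T(x)$ to $\omega$ on $\mathscr{B}_0$, to itself elsewhere), splitting the cost over $\mathscr{B}_0^\delta$ and the thin shell $\mathscr{B}_0\setminus\mathscr{B}_0^\delta$. The paper instead bounds $\mathsf{W}_1$ via Kantorovich--Rubinstein duality with a three-region decomposition of the test-function integral and then passes to $\mathsf{W}_2$ by equivalence of Wasserstein distances on $\S$. Your coupling argument is shorter and more transparent; both are valid.

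Your closing observation is also correct and reveals something the paper leaves implicit: because the gate degenerates on $\partial\mathscr{B}_0$, the choice of $\delta$ (hence the time rescaling, hence the triple $(\bW,\bU,b)$) necessarily depends on how $\mu_0$ distributes mass near $\partial\mathscr{B}_0$. The paper's own proof has exactly the same dependence (it picks $\delta$ small enough that $\mu_0(B(z,R-\delta))\ge(1-\varepsilon)\mu_0(\mathscr{B}_0))$, so strictly speaking the quantifier order in the lemma statement, $\exists(\bW,\bU,b)\,\forall\mu_0$, is not what either proof delivers. As you note this is harmless where the lemma is actually invoked (\Cref{prop: compression} fixes the measures first and builds the flow per measure), but it is a legitimate mismatch between statement and proof that your write-up surfaces and the paper does not.

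One small point of care: your reduction to geodesic flow requires $\mathscr{B}_0$ to be geodesically convex, which you note by assuming $R_0<\pi/2$. The paper makes the same tacit assumption without flagging it; in the way the lemma is consumed (small balls packed inside a support), this is automatically satisfied, so the assumption is harmless, but neither argument literally covers the case $R_0\ge\pi/2$ as the statement is phrased.
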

    
\begin{proof}[Proof of \Cref{lem: two.balls}]
As done in previous proofs, we can take all time horizons to be as large as desired throughout by rescaling the norm of the parameters.
Let $z\in\S$ denote the center and $R>0$ the radius of $\mathscr{B}_0$. Take an arbitrary $\omega\in\mathrm{int}(\mathscr{B}_0\cap\mathscr{B}_1)$. We consider
$\bU=-{\bf1} z^\top$ and $b=\cos(R){\bf1}$,
as well as any $\bW\in \mathscr{M}_{d\times d}(\mathbb{R})$ such that $\bW{\bf1}=\omega$.
Then $\bW(\bU x+b)_+=(-\cos d_g(z,x)+\cos(R))_+\omega,$
and note that
\begin{equation}\label{eq: activation.ball}
    (-\cos d_g(z,x)+\cos(R))_+> 0\hspace{1cm}\iff \hspace{1cm} x\in\mathscr{B}_0.
\end{equation}
Now observe that
\begin{equation}\label{eq: dissipation.sphere}
    \frac{\diff }{\diff t}\langle x(t),\omega\rangle=(-\cos d_g(z,x(t)) +\cos(R))_+(1-\langle x(t),\omega\rangle^2),
\end{equation}
which is positive whenever $x(t)\in\mathscr{B}_0\setminus\{\omega\}$. We claim that this implies the existence of a time $T_\varepsilon>0$ for which
\begin{equation} \label{eq: claim.lemmaA1} 
    \mu(T_\varepsilon,\mathscr{B}_0\cap\mathscr{B}_1)\geq(1-\varepsilon)\mu_0(\mathscr{B}_0).
\end{equation}
To prove this claim, let $\delta>0$ be fixed and to be determined later on. 
Because of \eqref{eq: dissipation.sphere}, there exists some $T_\delta>0$ such that
\begin{equation}\label{eq: claim.lemmaA11}
    \upphi^{T_\delta}(x)\in\mathscr{B}_0\cap\mathscr{B}_1\hspace{1cm}\text{ for }x\in B(z,R-\delta),
\end{equation}
where $\upphi^{T_\delta}:\S\to \S$ is the flow map of \eqref{eq: neural.ode.sphere}. Since $\mu(T_\delta)=\upphi^{T_\delta}_\#\mu_0$, we have
\begin{equation} \label{eq: mass.concentration}
    \mu(T_\delta, \mathscr{B}_0\cap\mathscr{B}_1)=\mu_0((\upphi^{T_\delta})^{-1}(\mathscr{B}_0\cap\mathscr{B}_1))\overset{\eqref{eq: claim.lemmaA11}}{\geq}\mu_0(B(z,R-\delta)).
\end{equation}
Taking $\delta>0$ small enough so that $\mu_0(B(z,R-\delta))\geq (1-\varepsilon) \mu_0(\mathscr{B}_0)$ yields claim \eqref{eq: claim.lemmaA1}. We conclude that \eqref{eq: claim.statement.lemmaA1} holds by rescaling time. Finally, by virtue of \eqref{eq: activation.ball}, the flow map $\upphi^t$ is such that $\upphi^t(x)=x$ for
$x\in\S\setminus\mathscr{B}_0$ and $t\in[0,T]$.

As for the second part of the statement, take $\mathscr{B}_1=B(\omega,\eta)\subset\mathscr{B}_0$ with $\eta>0$ to be determined later on.
Owing to \eqref{eq: mass.concentration}, we can argue in the same fashion as in the proof of \Cref{prop: compression}. We have
\begin{align*}
\mathsf{W}_1(\mu(T_\delta),\alpha) &= \sup_{\mathrm{Lip}(g)\leq 1}\left|\int g(\mu(T_\delta)-\alpha)\right|\\
    &\hspace{0.2cm}= \sup_{\mathrm{Lip}(g)\leq 1}\left|\int_{\mathscr{B}_0}g(\mu(T_\delta)- \alpha)+\int_{\S\setminus \mathscr{B}_0}g(\mu(T_\delta)-\alpha)\right|.
\end{align*}
Let $\overline{\varepsilon}>0$ be arbitrary and to be chosen small enough later.
Using \eqref{eq: mass.concentration}---with $\overline{\varepsilon}$ instead of $\varepsilon$---and the definition of $\mathscr{B}_1$, we find  
\begin{align*}
\left|\int_{\mathscr{B}_0\setminus\mathscr{B}_1} g \left(\mu(T_\delta) -\alpha\right)+ \int_{\mathscr{B}_1} g \left(\mu(T_\delta) -\alpha\right)\right|&\leq \left|\int_{\mathscr{B}_0\setminus\mathscr{B}_1} g \left(\mu(T_\delta) -\alpha\right)\right|\\
      &\hspace{-6cm}+ \left|\int_{\mathscr{B}_1} g \mu(T_\delta) -  \mu(T_\delta,\mathscr{B}_1)g(\omega)-(\mu_0(\mathscr{B}_0)-\mu(T_\delta,\mathscr{B}_1))g(\omega)\right|\\ &\hspace{-6cm}\leq\|\nabla g\|_{L^\infty(\S)}\cdot\eta\cdot\overline{\varepsilon}\cdot\mu_0(\mathscr{B}_0)+\eta+\overline{\varepsilon}\cdot\mu_0(\mathscr{B}_0),
\end{align*}
which tends to $0$ as $\overline{\varepsilon}$ and $\eta$ tend to zero.
On the other hand,
\begin{align*}
\left|\int_{\S\setminus\mathscr{B}_0}g(\mu(T_\delta)-\alpha)\right|=0
\end{align*}
by construction.
Pick $\overline{\varepsilon}$ and $\eta$ small enough so that $ \mathsf{W}_1(\mu(T_\delta),\alpha)\leq \varepsilon$ to conclude.
\end{proof}

We finally provide the brief proof of \Cref{lem: tubular.mass.movement}:

\begin{proof}[Proof of \Cref{lem: tubular.mass.movement}]
Write 
$
[0,T)=\bigcup_{k\in\llbracket1,M\rrbracket} [t_{k-1},t_k)
$
where $t_k=kT/K$, and proceed by backward induction:
\begin{align*}
\mu(T,\mathscr{B}_K)&=\mu(T,\mathscr{B}_K\setminus\mathscr{B}_{K-1})+\mu(T,\mathscr{B}_K\cap \mathscr{B}_{K-1})\\
&\geq \mu(t_{K-1},\mathscr{B}_K\setminus\mathscr{B}_{K-1})+(1-\varepsilon)\mu(t_{K-1},\mathscr{B}_{K-1}),
\end{align*}
where the last inequality follows from \Cref{lem: two.balls}. Using $\mathscr{B}_k\cap\mathscr{B}_{k'}=\varnothing$ whenever $|k-k'|\geq 2$, we arrive to
\begin{equation*}
    \mu(T,\mathscr{B}_k)\geq  (1-\varepsilon)^{K}\left(\sum_{k=1}^K\mu_0(\mathscr{B}_k\setminus\mathscr{B}_{k-1})+\mu_0(\mathscr{B}_0)\right)=(1-\varepsilon)^{K}\mu_0\left(\bigcup_{k\in\llbracket0,K\rrbracket}\mathscr{B}_k\right),
\end{equation*}
whereupon the conclusion follows.
\end{proof}

\subsection{\texorpdfstring{Proof of \Cref{lem: induction.barycenter}}{Proof of Lemma}}
\label{proof: induction.barycenter}

\begin{proof}[Proof of \Cref{lem: induction.barycenter}]

The proof is split in three steps.

\subsubsection*{Step 1. Isolating $\mu_0^N$ and $\nu_0$}

Throughout this step, $\bW\equiv0$.
Let $0<T_0<\ldots<T_{d-1}$ to be chosen later on and
\begin{equation*}
    \bV(t)=\sum_{k=1}^{d-1}\alpha_k\alpha_k^\top 1_{[T_{k-1},T_k]}(t)
\end{equation*}
with $\{\alpha_k\}_{k\in\llbracket1,d-1\rrbracket}$ an orthonormal basis of $\mathrm{span }(\{\mathbb{E}_{\mu_0^N}[z]\})^\perp$, namely 
$\langle \mathbb{E}_{\mu_0^N}[x],\alpha_k\rangle=0$ 
for all $k\in\llbracket1,d-1\rrbracket$. We proceed recursively starting from $k=1$. The solution to
\begin{equation} \label{eq: cauchy.pb.reloaded}
\begin{cases}  \partial_t\mu(t)+\dive(\proj_x\langle\alpha_1,\mathbb{E}_{\mu(t)}[x]\rangle \alpha_1\mu(t))=0 &\text{ on } \mathbb{R}_{\geq0}\times\S,\\ 
\mu(0)=\mu_0 &\text{ on } \S
\end{cases}
\end{equation}
for $\mu_0\in\mathscr{P}(\mathbb{Q}_1^{d-1})$ satisfies
\begin{equation*}
    \frac{\diff}{\diff t}\langle\mathbb{E}_{\mu(t)}[x],\alpha_1\rangle =\langle\mathbb{E}_{\mu(t)}[x],\alpha_1 \rangle \left(1-\int\langle x',\alpha_1\rangle^2\mu(t, \diff x')\right).
\end{equation*}
This implies
\begin{equation*}
    \langle\mathbb{E}_{\mu(t)}[x],\alpha_1\rangle=\langle\mathbb{E}_{\mu_0}[x],\alpha_1\rangle\mathrm{exp}\left(t-\int_0^t\int \langle x',\alpha_1\rangle^2 \mu(s, \diff x') \diff s\right).
\end{equation*} 
So $\langle\mathbb{E}_{\mu(t)}[x],\alpha_1\rangle$ does not change sign along $t\mapsto\mu(t)$, and $\frac{\diff}{\diff t} \langle\mathbb{E}_{\mu(t)}[x],\alpha_1\rangle=0$ whenever $\mathbb{E}_{\mu_0}[x]$ is orthogonal to $\alpha_1$ or if $\mu(t)=\updelta_{\pm \alpha_1}$. Hence for any $x(t)\in\supp\,\mu(t)$,
\begin{equation*}
\frac{\diff}{\diff t} \langle x(t),\alpha_1\rangle =\langle \mathbb{E}_{\mu(t)}[x],\alpha_1 \rangle\left(1-\langle \alpha_1,x(t)\rangle^2\right)
\end{equation*}
which implies 
\begin{equation*} 
    \lim_{t\to+\infty}x(t)=\pm \alpha_1
\end{equation*}
whenever $\langle \mathbb{E}_{\mu_0}[x], \alpha_1\rangle \neq 0$. 
Therefore, for every $\varepsilon_1>0$ we can take $T_1>0$ large enough so that
\begin{equation*}
    \supp\,\mu(T_1)\subset B(\alpha_1,\varepsilon_1)\cup B(-\alpha_1,\varepsilon_1)
\end{equation*}
whenever $\langle \mathbb{E}_{\mu_0}[x],\alpha_1\rangle \neq 0$. We can repeat the argument for every $k$ to deduce
\begin{equation}\label{eq: supports.in.balls}
    \supp\,\mu(T_{d-1})\subset \bigcup_{k\in\llbracket1,d-1\rrbracket} B(\alpha_k,C_{k}\varepsilon_k)\cup B(-\alpha_k,C_{k}\varepsilon_k)
\end{equation}
where $C_k>0$  does not depend on $\varepsilon_k$, but does depend on  $\varepsilon_\ell$ for $\ell>k$. 
We can choose all radii $\varepsilon_k$ small enough so that
    \begin{equation}\label{eq: union.out}
        \bigcup_{k\in\llbracket1,d-1\rrbracket} B(\alpha_k,C_{k}\varepsilon_k)\cup B(-\alpha_k,C_{k}\varepsilon_k)\subset\S\setminus\Q_1^{d-1}.
    \end{equation}
We have thus constructed a map 
$\Psi_{1}:\mathscr{P}(\S)\to\mathscr{P}(\S),$
with $\Psi_1(\mu_0)=\mu(T_d)$, where $\mu$ denotes the solution to the Cauchy problem \eqref{eq: cauchy.pb.reloaded} with the choice of parameters specified at the very beginning.  Since $\mathbb{E}_{\mu_0^i}[x]$ is not colinear with $\mathbb{E}_{\mu_0^N}[x]$, and thanks to \eqref{eq: supports.in.balls} and \eqref{eq: union.out},
$\supp\,\Psi_1(\mu^j_0)\subset\S\setminus\Q_1^{d-1}$ for $j\in\llbracket 1,N-1\rrbracket$, as well as 
$\Psi_1(\mu^N_0)=\mu^N_0,$ and $ \Psi_1(\nu_0)=\nu_0.$

\subsubsection*{Step 2. Clustering the supports of $\mu_0^N$ and $\nu_0$}

Let $a\in\S$ and $\underline{b}\in\R$ be such that 
    \begin{align*}
    \langle a,x\rangle +\underline{b}&>0\hspace{1cm}\text{ for }x\in \Q_1^{d-1}\\
\langle a,x\rangle +\underline{b}&<0\hspace{1cm}\text{ for }x\in \bigcup_{k\in\llbracket1,d-1\rrbracket} B(\alpha_k,C_k\varepsilon_k)\cup B(-\alpha_k, C_k\varepsilon_k).
\end{align*}
For instance, this can be ensured by taking $\{\varepsilon_k\}_{k\in\llbracket1,d-1\rrbracket}$ small enough and setting 
\begin{equation*}
    a=\mathbb{E}_{\mu_0^N}[x]/{\|\mathbb{E}_{\mu_0^N}[x]\|} \hspace{1cm}\text{ and } \hspace{1cm} b=- \max_{k\in\llbracket1,d-1\rrbracket} C_k\varepsilon_k.
\end{equation*} 
Let $\delta>0$ be arbitrary; in the interval $(T_{d},T_\delta)$, for $T_\delta>0$ to be determined later on, consider 
\begin{align*}
(\bW(t),\bU(t),b(t))\equiv(\bW_2,\bU,\underline{b})1_{[T_d,T_\delta]}(t),
\end{align*}
where $\bU={\bf1}a^\top$, and $\bW_2$ is any $d\times d$ matrix such that
$\bW_2 {\bf1}= \mathbb{E}_{\mu_0^N}[x].$
For this choice of parameters, the measures $\mu^i(T_d)$, are invariant by the action of the corresponding flow map of \eqref{eq: cauchy.pb} for $i\in\llbracket1,N-1\rrbracket$. We can pick $T_\delta>0$ large enough so that
\begin{equation} \label{eq: inclusion}
\supp\,\nu(T_\delta)\cup\supp\,\mu^N(T_\delta)\subset B\left(  \frac{\mathbb{E}_{\mu_0^N}[x]}{\|\mathbb{E}_{\mu_0^N}[x]\|},\delta\right).
\end{equation}
This follows by observing that 
\begin{equation*}
\lim_{t\to+\infty}\left\langle x(t),\frac{\mathbb{E}_{\mu_0^N}[x]}{\|\mathbb{E}_{\mu_0^N}[x]\|}\right\rangle =1    
\end{equation*}
for every $x_0\in\supp\,\mu_0^N$, where $x(t)$ follows the characteristics of \eqref{eq: cauchy.pb}, by adapting the same arguments as for \eqref{eq: dissipation.sphere} in the proof of \Cref{lem: two.balls}, or \eqref{eq: Hartman.Grobman} in the proof of \Cref{lem: induction.neural.ode}.
This construction yields a flow map 
$\Psi_2:\mathscr{P}(\S)\to\mathscr{P}(\S),$ 
with 
$\Psi_2(\mu_0)=\mu(T_\delta)$
where $\mu$ denotes the solution to \eqref{eq: cauchy.pb} on $[T_d, T_\delta]$ with the parameters specified in this step, which satisfies 
$\Psi_2(\mu^j(T_d))=\mu^j(T_d)$ for $j\in\llbracket1,N-1\rrbracket$,
and $\Psi_2(\mu^N(T_d)),\Psi_2(\nu(T_d))$
satisfy \eqref{eq: inclusion}.

\subsubsection*{Step 3. Flow reversal}

We finally employ $\Psi_{1}^{-1}$ and choose $\delta>0$ small enough to obtain the result----namely, setting $\Phi_{\text{fin}}\coloneqq\Psi_1^{-1}\circ\Psi_2\circ\Psi_1$, 
we have $\Phi_{\text{fin}}(\mu_0^i)=\mu_0^i$
for $i\in\llbracket1,N-1\rrbracket$, and
\begin{equation*}
    \supp\,\Phi_{\text{fin}}(\nu_0)\cup\supp\,\Phi_{\text{fin}}(\mu_0^N)\subset B\left( \frac{\mathbb{E}_{\mu_0^N}[x]}{\|\mathbb{E}_{\mu_0^N}[x]\|} , C_T\delta\right),
\end{equation*}
for $C_T>0$ depending on $\Psi_1$ but not on $\Psi_2$. Pick $\delta>0$ small enough to conclude.
\end{proof}

\subsection{\texorpdfstring{Proof of \Cref{lem: perturbation}}{Proof of Lemma}}
\label{proof: perturbation}

\begin{proof}[Proof of \Cref{lem: perturbation}]
We begin with the first part of the statement. 

\subsubsection*{Part 1.}
There exists an open ball $\mathscr{B}\subset \supp\,\mu_0\cup\supp\,\nu_0$ such that $
\mu_0(\mathscr{B})\neq\nu_0(\mathscr{B}).$
We now claim that there exists some $x^*\in\mathscr{B}$ such that 
\begin{equation*}
\mu_0(\mathscr{B})x^*+\int_{\S\setminus \mathscr{B}}x \mu_0(\diff x)\neq \nu_0(\mathscr{B})x^*+\int_{\S\setminus\mathscr{B}}x \nu_0(\diff x).
\end{equation*}
Indeed if this were to be false, then we'd have 
\begin{equation*}
x^*=\frac{1}{\mu_0(\mathscr{B})-\nu_0(\mathscr{B})}\int_{\S\setminus\mathscr{B}} x(\nu_0(\diff x)-\mu_0(\diff x))
\end{equation*}
for all $x^*\in\mathscr{B}$, which cannot hold. Take $x^*\in \mathscr{B}$ as above. 
Let $a$ be the center of $\mathscr{B}$ and $R$ its radius. 
Consider
\begin{align} \label{eq: controls.1}
\bU = -{\bf1}a^\top,\quad b= R {\bf1}, 
\end{align}
and any $\bW\in\mathscr{M}_{d\times d}(\R)$ satisfying
\begin{equation} \label{eq: controls.2}
\bW {\bf1}=x^*.
\end{equation}
By \Cref{lem: two.balls}, for any $\varepsilon>0$ we can take a large enough $T>0$ such that the solution to \eqref{eq: cauchy.pb}--\eqref{eq: vf} (with $\bV\equiv0$) satisfies $\mathsf{W}_2(\mu(T),\alpha)\leq\varepsilon$
with $\alpha(A)=\mu_0(\mathscr{B})\updelta_{x^*}(A\setminus\mathscr{B})+\mu_0(A\setminus\mathscr{B})$
for any Borel $A\subset\S$.
Since the expectation of a measure is continuous with respect to the measure in the sense of the Wasserstein distance, it follows that there is a Lipschitz invertible flow map $\upphi:\S\to\S$ of \eqref{eq: neural.ode.sphere} such that $\mathbb{E}_{\upphi_\#\mu_0}[x]\neq\mathbb{E}_{\upphi_\#\nu_0}[x]$. Furthermore, $\upphi(x)=x$ for $x\notin\mathscr{B}$ by construction.

\subsubsection*{Part 2.}
The parameters take the form
\begin{align*}
\bV(t)= I_d 1_{[0,T_*]}(t),\hspace{0.1cm} (\bW(t),\bU(t),b(t))=(\bW, \bU, b) 1_{[T_*,T]}(t),
\end{align*}
for $T_*>0$ and $T>T_*$ determined later on. (Recall that $\bB\equiv0$.) We first prove that if
\begin{equation}\label{eq: diff.support}
\supp\,\mu_0\neq\supp\,\nu_0
\end{equation}
is not satisfied, it ought to hold after some time. 
Indeed, suppose that \eqref{eq: diff.support} does not hold. 
Let $\tau>0$ be arbitrary.
For any $x_0\in \partial\conv_g\, \supp\,\mu_0\cap \supp\,\mu_0$ consider
\begin{equation*}
\begin{cases}
\dot{x}(t)=\mathbb{E}_{\mu(t)}[x]-\langle \mathbb{E}_{\mu(t)}[x],x(t)\rangle x(t)&\text{ in }  [0,\tau]\\
x(0)=x_0
\end{cases}
\end{equation*}
and
\begin{equation*}
\begin{cases}
\dot{y}(t)=\mathbb{E}_{\nu(t)}[x]-\langle \mathbb{E}_{\nu(t)}[x],y(t)\rangle y(t) &\text{ in }  [0,\tau]\\
y(0)=x_0.
\end{cases}
\end{equation*}
Taylor-expanding within the Duhamel formula, for $\tau$ small enough, we find 
\begin{equation*}
x(\tau)=x_0+\tau\left(\mathbb{E}_{\mu_0
}[x]-\left\langle\mathbb{E}_{\mu_0}[x],x_0\right\rangle x_0\right)+O(\tau^2)
\end{equation*}
and
\begin{equation*}
y(\tau)=x_0+\frac{\tau}{\gamma_1}\left(\mathbb{E}_{\mu_0
}[x]-\left\langle \mathbb{E}_{\mu_0}[x],x_0\right\rangle x_0\right)+O(\tau^2)
\end{equation*}
Then
\begin{equation*}
\left\langle y(\tau)-x(\tau),\frac{\mathbb{E}_{\mu_0}
[x]}{\|\mathbb{E}_{\mu_0
}[x]\|}\right\rangle=\tau\left(\frac{1}{\gamma_1}-1\right)\left(\|\mathbb{E}_{\mu_0}[x]\|-\frac{\langle \mathbb{E}_{\mu_0}[x],x_0\rangle^2}{\|\mathbb{E}_{\mu_0}[x]\|}\right)+O(\tau^2).
\end{equation*}
Suppose\footnote{If $\upsigma_d(\conv_g\,\supp\,\mu_0)=0$, we can argue as in the proof of \Cref{prop: targets.atoms}, reducing the dynamics to $\mathbb{S}^{d-2}$ (or a lower-dimensional sphere), where the same proof can be repeated.}  $\upsigma_d(\conv_g \,\supp\,\mu_0)>0$.
As $x_0\in\partial\conv_g\,\supp\,\mu_0$ and $\frac{\mathbb{E}_{\mu_0}[x]}{\|\mathbb{E}_{\mu_0}[x]\|}\in \mathrm{int }\,\mathrm{ conv}_g\,\supp\,\mu_0$,
\begin{equation*}
    \|\mathbb{E}_{\mu_0}[x]\|-\frac{\langle \mathbb{E}_{\mu_0}[x],x_0\rangle^2}{\|\mathbb{E}_{\mu_0}[x]\|}\geq c
\end{equation*}
for some $c>0$. Since $\gamma_1\in(0,1)$ we gather that
\begin{equation*}
\left\langle y(\tau)-x(\tau),\frac{\mathbb{E}_{\mu_0}
[x]}{\|\mathbb{E}_{\mu_0
}[x]\|}\right\rangle>c_1\tau+O(\tau^2)>0
\end{equation*}
for some $c_1>0$ and for $\tau$ small enough.
Consequently for $T_*$ small enough, we {have
$\supp\,\nu(T_*)\subset\supp\,\mu(T_*)$} as well as $\supp\,\mu(T_*)\neq \supp\,\nu(T_*)$. Therefore, there exist $\varepsilon>0$ and an open ball $\mathscr{B}$ such that
    \begin{equation}\label{eq: discriminatory.ball}
        \mathscr{B}\cap\supp\,\nu(T_*)\neq \varnothing,\quad \mathscr{B}\cap \supp\,\mu(T_*)=\varnothing
    \end{equation}
and
\begin{equation*}
    \mathscr{B}\subset \left\{ x\in \S: \quad \inf_{y\in \conv_g\, \supp\,\mu(T_*)}d_g(x,y)\leq \varepsilon\right\}.
\end{equation*}
Let $a$ be the center of $\mathscr{B}$ and $R$ its radius. In $(T_*,T)$, take $\bV\equiv0$, $\bW,\bU\in \mathscr{M}_{d\times d}(\mathbb{R})$ and $b\in \mathbb{R}^d$ as in \eqref{eq: controls.1}--\eqref{eq: controls.2}
    for some $x^*\in \mathscr{B}$ to be determined later on.  Because of \eqref{eq: discriminatory.ball}, $\nu$ is invariant by the action of the the flow map generated by the parameters defined in \eqref{eq: controls.1} and \eqref{eq: controls.2}. 
     We change the coordinate system so that
    \begin{equation*}
        \left(\int_{\S} x\nu(T_*)\right)_1=\alpha,\quad \left(\int_{\S} x\nu(T_*)\right)_k=0\quad \text{ for }k\geq 2.
    \end{equation*}
    Using the fact that $\mathscr{B}$ is open {and \eqref{eq: discriminatory.ball}}, it is impossible that for every $x^*\in \mathscr{B}$,
    \begin{equation*}
        \left(\int_{\S\setminus\mathscr{B}} x\,\mu(T_*)\right)_2+\mu(T_*, \mathscr{B})(x^*)_2=0.
    \end{equation*}
    Consequently there exist $x^*\in \mathscr{B}$ for which 
\begin{equation*}
    \int_{\S\setminus\mathscr{B}} x\,\mu(T_*)+\mu(T_*, \mathscr{B})x^* \hspace{1cm}\text{ and } \hspace{1cm}\int_{\S} x\nu(T_*)
\end{equation*}
are not colinear. Therefore, letting $T$ large enough, by the same arguments as in \Cref{lem: perturbation} {and since $\mathscr{B}\subset \conv_g\,\supp\,\mu_0\cup \conv_g\,\supp\,\nu_0$}, we can conclude. 

\end{proof}

\subsection{\texorpdfstring{Proof of \Cref{lem: hyp.propagation}}{Proof of Lemma}}
\label{proof: hyp.propagation}

\begin{proof}[Proof of \Cref{lem: hyp.propagation}]  
Since the vector field in \eqref{eq: cauchy.pb} is Lipschitz, for all $i\in\llbracket1,N\rrbracket$ there exist Lipschitz-continuous, invertible  $\mathsf{T}^i_{\Phi_1}:\S\to\S$ and $\mathsf{T}^i_{\Phi_3}:\S\to\S$ such that
\begin{equation*}
\Phi_1(\mu_0^i)=(\mathsf{T}^i_{\Phi_1})_\#\mu_0^i,\,\quad \text{ and }\,\Phi_3(\mu_1^i)=(\mathsf{T}^i_{\Phi_3})_\#\mu_1^i.
\end{equation*}
Then
\begin{equation}\label{eq: separated}
\supp\,(\mathsf{T}^i_{\Phi_1})_\#\mu_0^i\cap \supp\,(\mathsf{T}^j_{\Phi_1})_\#\mu_0^j=\varnothing,
\end{equation}
and
\begin{equation*}
\supp\,(\mathsf{T}^i_{\Phi_3})_\#\mu_1^i\cap \supp\,(\mathsf{T}^j_{\Phi_3})_\#\mu_1^j=\varnothing
\end{equation*}
for $i\neq j$. We wish to find an integrable $\uppsi^i:\S\to\S$ that satisfies
\begin{equation*}
(\uppsi^i\circ\mathsf{T}^i_{\Phi_1})_\#\mu_0^i= (\mathsf{T}^i_{\Phi_3})_\#\mu_1^i.
\end{equation*}
Since $\mathsf{T}^i_\#\mu_0^i=\mu_1^i$, and $\mathsf{T}^i_{\Phi_1},\mathsf{T}^i_{\Phi_1}$ are bijective, this is equivalent to $(\mathsf{T}^i_{\Phi_3})^{-1}\circ\uppsi^i\circ\mathsf{T}^i_{\Phi_1} =\mathsf{T}^i,$
so $\uppsi^i=\mathsf{T}^i_{\Phi_3}\circ\mathsf{T}^i\circ(\mathsf{T}^i_{\Phi_1})^{-1}.$
Thanks to \eqref{eq: separated}, there also exists a Lipschitz-continuous $\uppsi:\S\to \S$ satisfying \eqref{eq: diffeo.supports}.
\end{proof}

\subsection{\texorpdfstring{Proof of \Cref{lem: univ.approx}}{Proof of Lemma}}
\label{proof: univ.approx}

\begin{proof}[Proof of \Cref{lem: univ.approx}]
Consider
\begin{equation} \label{eq: simple.function}
\uppsi^\dagger_\varepsilon(x)\coloneqq\sum_{m=1}^{M(\varepsilon)} y_m^\varepsilon 1_{\Omega_m(\varepsilon)}(x),
\end{equation}
where $\Omega_m(\varepsilon)\subset\S$ are connected and pairwise disjoint with 
\begin{equation}\label{eq: partition}
    \bigcup_{m} \Omega_m(\varepsilon) = \S,
\end{equation}
whereas $y_m^\varepsilon\neq y_{m'}^\varepsilon$ when $m\neq m'$, and
\begin{equation}\label{eq: approx.by.simple.function}
    \left\|\uppsi^\dagger_\varepsilon-\uppsi\right\|_{L^2(\mu)}\leq \frac{\varepsilon}{2}.
\end{equation}
Our goal is to approximate $\uppsi^\dagger_\varepsilon$ by means of some flow map $\uppsi_\varepsilon:\S\to\S$ of \eqref{eq: neural.ode.sphere}. To this end, we also approximate $\mu$ as $|\mu(\S)-\mu^\eta(\S)|\leq\eta,$
with $\mu^\eta$ curated so we can apply \Cref{prop: interpolation.neural.ode} and \Cref{prop: compression} ``more easily''. Then,
\begin{align}
    \int \left\|\uppsi_\varepsilon(x)-\uppsi_\varepsilon^\dagger(x)\right\|^2\mu(\diff x)&=\int \left\|\uppsi_\varepsilon(x)-\sum_{m}y_m^\varepsilon 1_{\Omega_m}\right\|^2\mu^\eta(\diff x)\nonumber\\
    &+\int\left\|\uppsi_\varepsilon(x)-\uppsi^\dagger_\varepsilon(x)\right\|^2(\mu(\diff x)-\mu^\eta(\diff x))\nonumber\\
    &\hspace{-0.5cm}\leq \int \left\|\uppsi_\varepsilon(x)-\sum_{m}y_m^\varepsilon 1_{\Omega_m}\right\|^2\mu^\eta(\diff x)+2\pi\eta. 
    \label{eq: TV.implication}
\end{align}

\subsubsection*{Step 1: Constructing $\mu^\eta$}

Fix $\eta>0$. By the Lebesgue decomposition theorem, we split $\mu$ into purely atomic and diffusive parts:
$\mu=\mu_{\text{pp}}+\mu_{\text{diff}},$
with $\mu_{\text{diff}}$ having no atoms, and $
\mu_{\text{pp}}=\sum_{n=1}^{+\infty}\mu(\{x_n\})\updelta_{x_n}.$
Let $N(\eta)\geq1$ be such that $\mu_{\text{pp}}^\eta \coloneqq\sum_{n=1}^{N(\eta)}\mu(\{x_n\})\updelta_{x_n}$ satisfies $\mu_{\text{pp}}(A)-\mu_{\text{pp}}^\eta(A)\leq\eta/2$ for any Borel $A\subset\S$. 
Fix $\eta_1>0$ to be determined later on but such that for all $n\in\llbracket1,N(\eta)\rrbracket$,
\begin{equation} \label{eq: balls.small}
    B(x_n,\eta_1)\cap B(x_m,\eta_1)=\varnothing \hspace{1cm} \text{ for } m\neq n\in\llbracket1,N(\eta)\rrbracket.
\end{equation}
Consider
\begin{equation}\label{eq: Lebesgue.decomp}
    \mu^\eta\coloneqq\mu_{\text{pp}}^\eta+\mu_{\text{diff}}^\eta,
\end{equation}
where\footnote{If $\mu_{\text{pp}}=0$, consider an arbitrary  $x_1\in\S$ and then define $
    \mu_{\text{diff}}^{\eta}(A)\coloneqq\mu_{\text{diff}}\left(A\setminus B(x_1,\eta_1)\right).$}
\begin{equation} \label{eq: removing.mass.from.mu}
    \mu_{\text{diff}}^{\eta}(A)\coloneqq\mu_{\text{diff}}\left(A\setminus \bigcup_{n} B(x_n,\eta_1)\right)
\end{equation}
for any Borel $A\subset \S$. Furthermore, take $\eta_1>0$ small enough so that, in addition to \eqref{eq: balls.small},
$|\mu(\S)-\mu^\eta(\S)|\leq\eta.$

\subsubsection*{Step 2: Toward a sufficient matching problem}

We further decompose $\mu^\eta$ in several parts. For $m\in\llbracket1,M(\varepsilon)\rrbracket$, consider 
\begin{equation}\label{eq: mu.m}
    \mu_m(A)\coloneqq\mu^\eta_{\text{diff}}(A\cap\Omega_m)
\end{equation}
for any Borel $A\subset \S$. 
Because of \eqref{eq: removing.mass.from.mu}, \eqref{eq: mu.m} and \eqref{eq: partition}, we have
\begin{equation} \label{eq: mu.eta.mu.m}
    \mu^\eta_{\mathrm{diff}}(A) = \sum_{m} \mu_m(A)
\end{equation}
for any Borel $A\subset \S$. Therefore, thanks to \eqref{eq: Lebesgue.decomp} and \eqref{eq: mu.eta.mu.m}, bounding \eqref{eq: TV.implication} boils down to bounding
\begin{align}
    &\int \left\|\uppsi_\varepsilon(x)-\sum_{m}y_m^\varepsilon 1_{\Omega_m}\right\|^2\mu^\eta(\diff x)\nonumber\\
    &=\sum_{m}\int\left\|\uppsi_\varepsilon(x)-y_m^\varepsilon\right\|^2\mu_m+\sum_{n=1}^{N(\eta)}\mu(\{x_n\})\left\|\uppsi_\varepsilon(x_n)-\uppsi^\dagger_\varepsilon(x_n)\right\|^2.\label{eq: new.bound}
\end{align}
For the second term in \eqref{eq: new.bound} we will employ exact matching via \Cref{prop: interpolation.neural.ode}, whereas for the first, we first note that for any $\eta_3>0$, one has the  trivial identity
\begin{align}
    &\int\left\|\uppsi_\varepsilon(x)-y_m^\varepsilon\right\|^2\mu_m(\diff x)=\mu_m(\S)\Bigg(\int_{(\uppsi_\varepsilon)^{-1}(B(x_m,\eta_3))} \|\uppsi_\varepsilon(x)-y_m^\varepsilon\|^2\frac{\mu_m(\diff x)}{\mu_m(\S)}\nonumber\\&\hspace{4.2cm}+\int_{(\uppsi_\varepsilon)^{-1}(B(x_m,\eta_3))^c} \|\uppsi_\varepsilon(x)-y_m^\varepsilon\|^2\frac{\mu_m(\diff x)}{\mu_m(\S)}\Bigg)\label{eq: trivial.split}.
\end{align}
We use the following.

\begin{claim}\label{cl: W.to.ball}
Suppose $\mu\in\mathscr{P}(\S)$ and $x_0\in \S$  satisfy $
    \mathsf{W}_2(\mu,\updelta_{x_0})\leq\eta_2.$
There exists a universal constant $C>0$ such that $1-\mu(B(x_0,\eta_3))\leq C\eta_2/\eta_3$
for all $\eta_3>0$.
\end{claim}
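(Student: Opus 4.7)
The plan is to recognize that this is a straightforward Markov-type tail bound for the measure $\mu$ centered at $x_0$, derived from the Wasserstein hypothesis. The starting point is the closed form of the Wasserstein-$2$ distance to a Dirac: for any $\mu\in\mathscr{P}(\S)$ and $x_0\in\S$,
\begin{equation*}
\mathsf{W}_2^2(\mu,\updelta_{x_0})=\int d_g(x,x_0)^2\,\mu(\diff x),
\end{equation*}
since $\updelta_{x_0}$ admits only the trivial coupling $\mu\otimes\updelta_{x_0}$ with $\mu$.

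Next, I would pass from $\mathsf{W}_2$ to $\mathsf{W}_1$ via Cauchy--Schwarz, which yields the first-moment control
\begin{equation*}
\int d_g(x,x_0)\,\mu(\diff x)\leq\left(\int d_g(x,x_0)^2\,\mu(\diff x)\right)^{1/2}=\mathsf{W}_2(\mu,\updelta_{x_0})\leq\eta_2.
\end{equation*}
Applying Markov's inequality to the nonnegative function $x\mapsto d_g(x,x_0)$ then gives
\begin{equation*}
\mu\bigl(\{x\in\S:d_g(x,x_0)\geq\eta_3\}\bigr)\leq\frac{1}{\eta_3}\int d_g(x,x_0)\,\mu(\diff x)\leq\frac{\eta_2}{\eta_3}.
\end{equation*}
Since the ball $B(x_0,\eta_3)$ is defined with respect to $d_g$, its complement is precisely $\{x\in\S:d_g(x,x_0)\geq\eta_3\}$, so
\begin{equation*}
1-\mu(B(x_0,\eta_3))=\mu\bigl(B(x_0,\eta_3)^c\bigr)\leq\frac{\eta_2}{\eta_3},
\end{equation*}
which is the stated inequality with universal constant $C=1$.

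This proof is essentially routine and no real obstacle is expected. The only minor subtlety worth flagging is the choice of metric in the Wasserstein distance: because balls in the statement are taken with respect to the geodesic distance $d_g$, the $\mathsf{W}_2$ in the hypothesis must also be understood with respect to $d_g$ (as is the convention throughout the paper); the same argument would work, with a different explicit constant, if one used the chordal distance instead, since the two are equivalent on $\S$.
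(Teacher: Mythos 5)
Your proof is correct. The route you take is genuinely different from the paper's: the paper invokes Kantorovich--Rubinstein duality and bounds $\mathsf{W}_1(\mu,\updelta_{x_0})$ from below by testing against an explicit piecewise-linear bump function that equals $1$ on $B(x_0,\eta_3)$, interpolates linearly on an annulus, and vanishes outside $B(x_0,2\eta_3)$; rearranging the resulting inequality yields the claim with an unspecified numerical constant. You instead observe that since $\updelta_{x_0}$ admits only the product coupling, $\mathsf{W}_2^2(\mu,\updelta_{x_0})=\int d_g(x,x_0)^2\,\mu(\diff x)$, pass to the first moment via Cauchy--Schwarz (equivalently $\mathsf{W}_1\le\mathsf{W}_2$), and finish with Markov's inequality on $d_g(\cdot,x_0)$. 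Your argument is more elementary, avoids the dual formulation entirely, and pins down the explicit constant $C=1$, whereas the paper's constant is left implicit (and the paper's displayed test function contains a typo in the middle case). The only cosmetic difference: depending on whether $B(x_0,\eta_3)$ is taken open or closed, the complement is $\{d_g\ge\eta_3\}$ or $\{d_g>\eta_3\}$, but Markov handles both and the resulting bound is identical.
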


\begin{proof}[Proof of \Cref{cl: W.to.ball}]
By compactness of $\S$ and Kantorovich-Rubinstein duality,
\begin{equation*}
   \mathsf{W}_1(\mu,\updelta_{x_0})=\sup_{\mathrm{Lip}(g)\leq 1}\int g(\mu -\updelta_{x_0} )\leq C\cdot\eta_2
\end{equation*}
for some numerical constant $C>0$.
Hence, for $g:\S\to \S$ defined as 
\begin{equation*}
    g(x)=\begin{cases}
        1\quad &x\in B(x,\eta_3)\\
        1-\frac{1-\eta_3}{\eta_3}\quad &x\in B(x,\eta_3)\cap B(x,2\eta_3)\\
        0\quad &x\notin B(x,2\eta_3),
    \end{cases}
\end{equation*}
we obtain $1-\mu(B(x,\eta_3))\leq C\eta_2/\eta_3.$
\end{proof}

From \eqref{eq: trivial.split}, if $\mathsf{W}_2(\mu_m,\updelta_{
y^\varepsilon_m})\leq \eta_2 $
were to hold, by applying \Cref{cl: W.to.ball} one would find
\begin{equation}\label{eq: estimate.in.m}
    \int\left\|\uppsi_\varepsilon(x)-y_m\right\|^2\mu_m(\diff x)\leq\mu_m(\S)\left(\eta_3^2+2\pi\cdot  C\cdot \frac{\eta_2}{\eta_3}\right).
\end{equation}
\eqref{eq: new.bound} and \eqref{eq: estimate.in.m} naturally raise the following problem: find a flow map that matches
\begin{align*}
    \left(\mu_m,\mu_m(\S)\updelta_{y_m}\right) &\quad\text{ for } m\in\llbracket1,M(\varepsilon)\rrbracket,\\ 
    \left(\mu(\{x_n\})\updelta_{x_n},\mu(\{x_n\})\updelta_{\uppsi^\dagger_\varepsilon(x_n)}\right) &\quad\text{ for } n\in\llbracket1,N(\eta)\rrbracket.
\end{align*}
We aim for the matching to be exact for the discrete input measures (second line) and approximate in $\mathsf{W}_2$ for the diffuse ones (first line).

\subsubsection*{Step 3: Constructing $\uppsi_\varepsilon$ through matching}
We look to use \Cref{prop: compression} to cluster the diffuse input measures to a single atom, which paired with \Cref{prop: interpolation.neural.ode} for matching all atoms approximately, would lead to the conclusion. 
Specifically, we construct the candidate $\uppsi_\varepsilon:\S\to\S$ as
\begin{equation}\label{eq: final.flow}
    \uppsi_\varepsilon\coloneqq\upphi_3\circ \upphi_2\circ \upphi_1,
\end{equation}
where
\begin{itemize}
    \item $\upphi_1:\S\to \S$ is the flow map induced by \Cref{prop: interpolation.neural.ode}\footnote{Should the assumption in \Cref{prop: interpolation.neural.ode} not hold, one can always choose slightly different \(y_m^\varepsilon\) in \eqref{eq: simple.function} so that the approximation error is not altered and the assumption does hold.}, which exactly matches $\mu(\{x_n\})\updelta_{x_n}$ to $\mu(\{x_n\})\updelta_{\uppsi^\dagger_\varepsilon(x_n)}$, for all $n\in\llbracket1,N(\eta)\rrbracket$;
    \item $\upphi_2:\S\to \S$ is the flow map induced by \Cref{prop: compression} that concentrates $\mu_m$ near some atom inside $\supp\,\upphi_{1\#}\mu_m$, for all $m\in\llbracket1,M(\varepsilon)\rrbracket$;
    \item $\upphi_3:\S\to \S$ is the flow map induced by \Cref{prop: interpolation.neural.ode} that matches the atoms from the previous step to $\mu_m(\S)\updelta_{y_m^\varepsilon}$, for all $m\in\llbracket1,M(\varepsilon)\rrbracket$.
\end{itemize}
We now make the construction of \eqref{eq: final.flow} precise, and with the help of \eqref{eq: estimate.in.m}, \Cref{prop: compression}, and \Cref{prop: interpolation.neural.ode}, we bound the right hand side in \eqref{eq: new.bound}.
\begin{enumerate}
    \item Thanks to \Cref{prop: interpolation.neural.ode} we have $\upphi_{1\#}\mu^\eta_{\text{pp}}=\uppsi^\dagger_{\varepsilon\#} \mu^\eta_{\text{pp}}.$ Exact matching ensures
\begin{equation}\label{eq: first.flow.discrete}
    \sum_{n=1}^{N(\eta)} \mu(\{x_n\})\left\|\upphi_1(x_n)-\uppsi^\dagger_\varepsilon(x_n)\right\|^2=0.
\end{equation}
\item We apply \Cref{prop: compression} to $\upphi_{1\#}\mu_m$ to deduce that, for all $m\in\llbracket1,M(\varepsilon)\rrbracket$,
\begin{equation*}
\mathsf{W}_2\left((\upphi_2\circ\upphi_1)_{\#}\frac{\mu_m}{ \mu_m(\S)}, \updelta_{x_{m}}\right)\leq \eta_2
\end{equation*}
for some $x_{m}\in \supp\, \upphi_{1\#}\mu_m$ and for small enough $\eta_2>0$ to be determined later on. Note that when we apply \Cref{prop: compression} for each $m$ in view of clustering $\upphi_{1\#}\mu_m$ to a discrete measure supported inside $\supp\,\upphi_{1\#}\mu_m$, the flow map stemming from \Cref{prop: compression} also satisfies---because of how \Cref{lem: tubular.mass.movement} is applied in the proof of \Cref{prop: compression}
\begin{equation}\label{eq: identity.flow.2}
\upphi_2\bigg|_{\S\setminus\bigcup_{m} \supp\,\upphi_{1\#}\mu_m}\equiv\mathrm{Id}.
\end{equation}
Then, by the continuity of the flow map $\upphi_1$, and \eqref{eq: removing.mass.from.mu},
we have 
$$
\supp\,\upphi_{1\#}\mu_{\text{pp}}^\eta\subset \S\setminus\bigcup_{m}\supp\,\upphi_{1\#}\mu_m,
$$
and from \eqref{eq: identity.flow.2}
\begin{equation*}
    (\upphi_2\circ\upphi_1)_\#\mu^\eta_{\text{pp}} =\uppsi^\dagger_{\varepsilon\#}\mu^\eta_{\text{pp}}.
\end{equation*}
This means that, paired with \eqref{eq: first.flow.discrete}, we also have
\begin{equation*} 
    \sum_{n=1}^{N(\eta)} \mu(\{x_n\})\left\|(\upphi_2\circ\upphi_1)(x_n)-\uppsi^\dagger_\varepsilon(x_n)\right\|^2=0.
\end{equation*}

\item  We then apply \Cref{prop: interpolation.neural.ode} to find a flow map $\upphi_3$ which matches the pairs $(x_m,y_m)_{m\in\llbracket1,M(\varepsilon)\rrbracket}$, and leads us to deduce, by virtue of continuity with respect to the data of \eqref{eq: neural.pde.sphere}, that
\begin{equation}\label{eq: close.in.W2}
\mathsf{W}_2\left((\upphi_3\circ\upphi_2\circ\upphi_1)_{\#}\frac{\mu_m}{\mu_m(\S)}  ,\updelta_{y_m^\varepsilon}\right)\leq C_{M(\varepsilon)}\cdot \eta_2
\end{equation}
holds for some $C_{M(\varepsilon)}>0$ independent of $\eta$. Moreover, after applying $\upphi_3$, thanks to \Cref{prop: interpolation.neural.ode} (or \Cref{lem: induction.neural.ode}), we have that the pure point part remains unaltered: $
    (\upphi_3\circ\upphi_2\circ\upphi_1)_\# \mu^\eta_{\text{pp}} =\uppsi^\dagger_{\varepsilon\#}\mu^\eta_{\text{pp}}.$
Hence,
\begin{equation*}
    \sum_{n=1}^{N(\eta)} \mu(\{x_n\})\left\|\uppsi_\varepsilon(x_n)-\uppsi^\dagger_\varepsilon(x_n)\right\|^2=0.
\end{equation*}

\end{enumerate}

 \subsubsection*{Step 4: Putting everything together}
 Thanks to \eqref{eq: estimate.in.m} and \eqref{eq: close.in.W2}, for any $\varepsilon_1>0$ we can choose $\eta_2$ and $\eta_3$ small enough as to ensure
\begin{equation*}
    \int\|\uppsi_\varepsilon(x)-y_m^\varepsilon\|^2\mu_m(\diff x)\leq\mu_m(\S)\varepsilon_1.
\end{equation*}
Since $\sum_{m} \mu_m(\S)\leq 1$ by construction,
\begin{equation*}
    \sum_{m} \int\|\uppsi_\varepsilon(x)-y_m^\varepsilon\|^2\mu_m(\diff x)\leq \varepsilon_1.
\end{equation*}
Combining all the estimates, and choosing $\varepsilon_1$ and $\eta$ small enough, we can deduce that
\begin{equation*}
    \left\|\uppsi_\varepsilon-\uppsi^\dagger_\varepsilon\right\|^2_{L^2(\mu)}=\int\left\|\uppsi_\varepsilon(x)-\sum_{m}y_m^\varepsilon1_{\Omega_m}\right\|^2\mu(\diff x)\leq \varepsilon_1+2\pi\cdot\eta\leq\frac{\varepsilon^2}{4},
\end{equation*}
which paired with \eqref{eq: approx.by.simple.function} leads us to the conclusion.
\end{proof}

\bibliographystyle{alpha}
\bibliography{refs}

@article{geshkovski2023mathematical,
  title={A mathematical perspective on transformers},
  author={Geshkovski, Borjan and Letrouit, Cyril and Polyanskiy, Yury and Rigollet, Philippe},
  journal={Bulletin of the American Mathematical Society},
  volume={62},
  number={3},
  pages={427--479},
  year={2025}
}

@article{geshkovski2026constructiveconditionalnormalizingflows,
      title={Constructive conditional normalizing flows}, 
      author={Borjan Geshkovski and Domènec Ruiz-Balet},
      year={2026},
      journal = {arXiv preprint arXiv:2602.08606}
}

@article{alvarez2026perceptrons,
  title={Perceptrons and localization of attention's mean-field landscape},
  author={{\'A}lvarez-L{\'o}pez, Antonio and Geshkovski, Borjan and Ruiz-Balet, Dom{\`e}nec},
  journal={arXiv preprint arXiv:2601.21366},
  year={2026}
}

@article{geshkovski2024emergence,
  title={The emergence of clusters in self-attention dynamics},
  author={Geshkovski, Borjan and Letrouit, Cyril and Polyanskiy, Yury and Rigollet, Philippe},
  journal={Advances in Neural Information Processing Systems},
  volume={36},
  year={2024}
}

@article{brenier1991polar,
  title={Polar factorization and monotone rearrangement of vector-valued functions},
  author={Brenier, Yann},
  journal={Communications on Pure and Applied Mathematics},
  volume={44},
  number={4},
  pages={375--417},
  year={1991},
  publisher={Wiley Online Library}
}

@article{ruiz2023neural,
  title={Neural ode control for classification, approximation, and transport},
  author={Ruiz-Balet, Domenec and Zuazua, Enrique},
  journal={SIAM Review},
  volume={65},
  number={3},
  pages={735--773},
  year={2023},
  publisher={SIAM}
}

@book{coron2007control,
  title={Control and nonlinearity},
  author={Coron, Jean-Michel},
  number={136},
  series = {Mathematical Surveys and Monographs},
  year={2007},
  publisher={American Mathematical Soc.}
}

@inproceedings{lu2019understanding,
  title={Understanding and Improving Transformer From a Multi-Particle Dynamic System Point of View.},
  author={Lu, Yiping and Li, Zhuohan and He, Di and Sun, Zhiqing and Dong, Bin and Qin, Tao and Wang, Liwei and Liu, Tie-yan},
  booktitle={ICLR 2020 Workshop on Integration of Deep Neural Models and Differential Equations},
  year={2020}
}

@inproceedings{sander2022sinkformers,
  title={{Sinkformers: Transformers with doubly stochastic attention}},
  author={Sander, Michael E and Ablin, Pierre and Blondel, Mathieu and Peyr{\'e}, Gabriel},
  booktitle={International Conference on Artificial Intelligence and Statistics},
  pages={3515--3530},
  year={2022},
  organization={PMLR}
}

@article{delalande2023quantitative,
  title={Quantitative stability of optimal transport maps under variations of the target measure},
  author={Delalande, Alex and Merigot, Quentin},
  journal={Duke Mathematical Journal},
  volume={172},
  number={17},
  pages={3321--3357},
  year={2023},
  publisher={Duke University Press}
}

@article{jiang2023algorithms,
  title={Algorithms for mean-field variational inference via polyhedral optimization in the Wasserstein space},
  author={Jiang, Yiheng and Chewi, Sinho and Pooladian, Aram-Alexandre},
  journal={Foundations of Computational Mathematics},
  pages={1--52},
  year={2025},
  publisher={Springer}
}

@article{agrachev2024generic,
  title={Generic controllability of equivariant systems and applications to particle systems and neural networks},
  author={Agrachev, Andrei and Letrouit, Cyril},
  journal={Annales de l'Institut Henri Poincar{\'e} C},
  year={2025}
}

@inproceedings{chen2024provably,
  title={Provably learning a multi-head attention layer},
  author={Chen, Sitan and Li, Yuanzhi},
  booktitle={Proceedings of the 57th Annual ACM Symposium on Theory of Computing},
  pages={1744--1754},
  year={2025}
}

@article{wu2024role,
  title={{On the Role of Attention Masks and LayerNorm in Transformers}},
  author={Wu, Xinyi and Ajorlou, Amir and Wang, Yifei and Jegelka, Stefanie and Jadbabaie, Ali},
  journal={arXiv preprint arXiv:2405.18781},
  year={2024}
}

@article{adu2024approximate,
  title={Approximate controllability of continuity equation of transformers},
  author={Adu, Daniel Owusu and Gharesifard, Bahman},
  journal={IEEE Control Systems Letters},
  volume={8},
  pages={964--969},
  year={2024},
  publisher={IEEE}
}

@article{paul2022microscopic,
  title={From microscopic to macroscopic scale equations: mean field, hydrodynamic and graph limits},
  author={Paul, Thierry and Tr{\'e}lat, Emmanuel},
  journal={arXiv preprint arXiv:2209.08832},
  year={2022}
}

@book{sontag2013mathematical,
  title={Mathematical control theory: deterministic finite dimensional systems},
  author={Sontag, Eduardo D},
  volume={6},
  year={2013},
  publisher={Springer Science \& Business Media}
}

@inproceedings{
yun2019transformers,
title={Are Transformers universal approximators of sequence-to-sequence functions?},
author={Chulhee Yun and Srinadh Bhojanapalli and Ankit Singh Rawat and Sashank Reddi and Sanjiv Kumar},
booktitle={International Conference on Learning Representations},
year={2020},
url={https://openreview.net/forum?id=ByxRM0Ntvr}
}

@inproceedings{
kratsios2021universal,
title={Universal Approximation Under Constraints is Possible with Transformers},
author={Anastasis Kratsios and Behnoosh Zamanlooy and Tianlin Liu and Ivan Dokmani{\'c}},
booktitle={International Conference on Learning Representations},
year={2022},
url={https://openreview.net/forum?id=JGO8CvG5S9}
}

@inproceedings{alberti2023sumformer,
  title={Sumformer: Universal approximation for efficient transformers},
  author={Alberti, Silas and Dern, Niclas and Thesing, Laura and Kutyniok, Gitta},
  booktitle={Topological, Algebraic and Geometric Learning Workshops 2023},
  pages={72--86},
  year={2023},
  organization={PMLR}
}

@article{elamvazhuthi2022neural,
  title={Neural ode control for trajectory approximation of continuity equation},
  author={Elamvazhuthi, Karthik and Gharesifard, Bahman and Bertozzi, Andrea L and Osher, Stanley},
  journal={IEEE Control Systems Letters},
  volume={6},
  pages={3152--3157},
  year={2022},
  publisher={IEEE}
}

@article{li2022deep,
  title={Deep learning via dynamical systems: An approximation perspective},
  author={Li, Qianxiao and Lin, Ting and Shen, Zuowei},
  journal={Journal of the European Mathematical Society},
  volume={25},
  number={5},
  pages={1671--1709},
  year={2022}
}

@article{cheng2023interpolation,
  title={Interpolation, approximation, and controllability of deep neural networks},
  author={Cheng, Jingpu and Li, Qianxiao and Lin, Ting and Shen, Zuowei},
  journal={SIAM Journal on Control and Optimization},
  volume={63},
  number={1},
  pages={625--649},
  year={2025},
  publisher={SIAM}
}

@article{chen2025quantitative,
  title={Quantitative Clustering in Mean-Field Transformer Models},
  author={Chen, Shi and Lin, Zhengjiang and Polyanskiy, Yury and Rigollet, Philippe},
  journal={arXiv preprint arXiv:2504.14697},
  year={2025}
}

@article{alvarez2025constructive,
  title={Constructive approximate transport maps with normalizing flows},
  author={{\'A}lvarez-L{\'o}pez, Antonio and Geshkovski, Borjan and Ruiz-Balet, Dom{\`e}nec},
  journal={Applied Mathematics \& Optimization},
  volume={92},
  number={2},
  pages={33},
  year={2025},
  publisher={Springer}
}

@inproceedings{edelman2022inductive,
  title={Inductive biases and variable creation in self-attention mechanisms},
  author={Edelman, Benjamin L and Goel, Surbhi and Kakade, Sham and Zhang, Cyril},
  booktitle={International Conference on Machine Learning},
  pages={5793--5831},
  year={2022},
  organization={PMLR}
}

@article{geshkovski2024dynamic,
  title={Dynamic metastability in the self-attention model},
  author={Geshkovski, Borjan and Koubbi, Hugo and Polyanskiy, Yury and Rigollet, Philippe},
  journal={arXiv preprint arXiv:2410.06833},
  year={2024}
}

@article{tabuada2022universal,
  title={Universal approximation power of deep residual neural networks through the lens of control},
  author={Tabuada, Paulo and Gharesifard, Bahman},
  journal={IEEE Transactions on Automatic Control},
  volume={68},
  number={5},
  pages={2715--2728},
  year={2022},
  publisher={IEEE}
}

@article{mccann2001polar,
  title={{Polar factorization of maps on Riemannian manifolds}},
  author={McCann, Robert J},
  journal={Geometric \& Functional Analysis GAFA},
  volume={11},
  number={3},
  pages={589--608},
  year={2001},
  publisher={Springer}
}

@article{scagliotti2021deep,
  title={Deep Learning approximation of diffeomorphisms via linear-control systems},
  author={Scagliotti, Alessandro},
  journal={Mathematical Control and Related Fields},
  volume={13},
  number={3},
  pages={1226--1257},
  year={2023},
  publisher={Mathematical Control and Related Fields}
}

@article{agrachev2020control,
  title={Control in the spaces of ensembles of points},
  author={Agrachev, Andrei and Sarychev, Andrey},
  journal={SIAM Journal on Control and Optimization},
  volume={58},
  number={3},
  pages={1579--1596},
  year={2020},
  publisher={SIAM}
}

@article{agrachev2022control,
  title={Control on the manifolds of mappings with a view to the deep learning},
  author={Agrachev, Andrei and Sarychev, Andrey},
  journal={Journal of Dynamical and Control Systems},
  volume={28},
  number={4},
  pages={989--1008},
  year={2022},
  publisher={Springer}
}

@article{benamou2000computational,
  title={{A computational fluid mechanics solution to the Monge-Kantorovich mass transfer problem}},
  author={Benamou, Jean-David and Brenier, Yann},
  journal={Numerische Mathematik},
  volume={84},
  number={3},
  pages={375--393},
  year={2000},
  publisher={Springer-Verlag Berlin/Heidelberg}
}

@article{vaswani2017attention,
  title={Attention is all you need},
  author={Vaswani, Ashish and Shazeer, Noam and Parmar, Niki and Uszkoreit, Jakob and Jones, Llion and Gomez, Aidan N and Kaiser, {\L}ukasz and Polosukhin, Illia},
  journal={Advances in Neural Information Processing Systems},
  volume={30},
  year={2017}
}

@article{vuckovic2020mathematical,
  title={A mathematical theory of attention},
  author={Vuckovic, James and Baratin, Aristide and Combes, Remi Tachet des},
  journal={arXiv preprint arXiv:2007.02876},
  year={2020}
}

@article{noci2022signal,
  title={Signal propagation in transformers: Theoretical perspectives and the role of rank collapse},
  author={Noci, Lorenzo and Anagnostidis, Sotiris and Biggio, Luca and Orvieto, Antonio and Singh, Sidak Pal and Lucchi, Aurelien},
  journal={Advances in Neural Information Processing Systems},
  volume={35},
  pages={27198--27211},
  year={2022}
}

@inproceedings{dong2021attention,
  title={Attention is not all you need: Pure attention loses rank doubly exponentially with depth},
  author={Dong, Yihe and Cordonnier, Jean-Baptiste and Loukas, Andreas},
  booktitle={International Conference on Machine Learning},
  pages={2793--2803},
  year={2021},
  organization={PMLR}
}

@article{dovonon2024setting,
  title={{Setting the Record Straight on Transformer Oversmoothing}},
  author={Dovonon, Gb{\`e}tondji JS and Bronstein, Michael M and Kusner, Matt J},
  journal={arXiv preprint arXiv:2401.04301},
  year={2024}
}

@inproceedings{chen2022principle,
  title={The principle of diversity: Training stronger vision transformers calls for reducing all levels of redundancy},
  author={Chen, Tianlong and Zhang, Zhenyu and Cheng, Yu and Awadallah, Ahmed and Wang, Zhangyang},
  booktitle={Proceedings of the IEEE/CVF Conference on Computer Vision and Pattern Recognition},
  pages={12020--12030},
  year={2022}
}

@inproceedings{ru2023token,
  title={Token contrast for weakly-supervised semantic segmentation},
  author={Ru, Lixiang and Zheng, Heliang and Zhan, Yibing and Du, Bo},
  booktitle={Proceedings of the IEEE/CVF Conference on Computer Vision and Pattern Recognition},
  pages={3093--3102},
  year={2023}
}

@inproceedings{
guo2023contranorm,
title={ContraNorm: A Contrastive Learning Perspective on Oversmoothing and Beyond},
author={Xiaojun Guo and Yifei Wang and Tianqi Du and Yisen Wang},
booktitle={The Eleventh International Conference on Learning Representations },
year={2023},
url={https://openreview.net/forum?id=SM7XkJouWHm}
}

@article{scholkemper2024residual,
  title={Residual Connections and Normalization Can Provably Prevent Oversmoothing in GNNs},
  author={Scholkemper, Michael and Wu, Xinyi and Jadbabaie, Ali and Schaub, Michael},
  journal={arXiv preprint arXiv:2406.02997},
  year={2024}
}

@article{castin2025unified,
  title={A unified perspective on the dynamics of deep transformers},
  author={Castin, Val{\'e}rie and Ablin, Pierre and Carrillo, Jos{\'e} Antonio and Peyr{\'e}, Gabriel},
  journal={arXiv preprint arXiv:2501.18322},
  year={2025}
}

@article{burger2025analysis,
  title={Analysis of mean-field models arising from self-attention dynamics in transformer architectures with layer normalization},
  author={Burger, Martin and Kabri, Samira and Korolev, Yury and Roith, Tim and Weigand, Lukas},
  journal={Philosophical Transactions A},
  volume={383},
  number={2298},
  pages={20240233},
  year={2025},
  publisher={The Royal Society}
}

@article{polyanskiy2025synchronization,
  title={Synchronization of mean-field models on the circle},
  author={Polyanskiy, Yury and Rigollet, Philippe and Yao, Andrew},
  journal={arXiv preprint arXiv:2507.22857},
  year={2025}
}

@inproceedings{abellaconsensus,
  title={Consensus Is All You Get: The Role of Attention in Transformers},
  author={Abella, {\'A}lvaro Rodr{\'\i}guez and Silvestre, Jo{\~a}o Pedro and Tabuada, Paulo},
  year = {2025},
  booktitle={Forty-second International Conference on Machine Learning}
}

@article{criscitiello2024synchronization,
  title={Synchronization on circles and spheres with nonlinear interactions},
  author={Criscitiello, Christopher and Rebjock, Quentin and McRae, Andrew D and Boumal, Nicolas},
  journal={arXiv preprint arXiv:2405.18273},
  year={2024}
}

@article{alcalde2025clustering,
  title={Clustering in pure-attention hardmax transformers and its role in sentiment analysis},
  author={Alcalde, Albert and Fantuzzi, Giovanni and Zuazua, Enrique},
  journal={SIAM Journal on Mathematics of Data Science},
  volume={7},
  number={3},
  pages={1367--1393},
  year={2025},
  publisher={SIAM}
}

@article{bao2024self,
  title={Self-attention Networks Localize When QK-eigenspectrum Concentrates},
  author={Bao, Han and Hataya, Ryuichiro and Karakida, Ryo},
  journal={arXiv preprint arXiv:2402.02098},
  year={2024}
}

@article{feng2022rank,
  title={Rank diminishing in deep neural networks},
  author={Feng, Ruili and Zheng, Kecheng and Huang, Yukun and Zhao, Deli and Jordan, Michael and Zha, Zheng-Jun},
  journal={Advances in Neural Information Processing Systems},
  volume={35},
  pages={33054--33065},
  year={2022}
}

@inproceedings{zhao2023are,
title={Are More Layers Beneficial to Graph Transformers?},
author={Haiteng Zhao and Shuming Ma and Dongdong Zhang and Zhi-Hong Deng and Furu Wei},
booktitle={The Eleventh International Conference on Learning Representations },
year={2023},
url={https://openreview.net/forum?id=uagC-X9XMi8}
}

@article{noci2024shaped,
  title={The shaped transformer: Attention models in the infinite depth-and-width limit},
  author={Noci, Lorenzo and Li, Chuning and Li, Mufan and He, Bobby and Hofmann, Thomas and Maddison, Chris J and Roy, Dan},
  journal={Advances in Neural Information Processing Systems},
  volume={36},
  year={2024}
}

@inproceedings{zhai2023stabilizing,
  title={Stabilizing transformer training by preventing attention entropy collapse},
  author={Zhai, Shuangfei and Likhomanenko, Tatiana and Littwin, Etai and Busbridge, Dan and Ramapuram, Jason and Zhang, Yizhe and Gu, Jiatao and Susskind, Joshua M},
  booktitle={International Conference on Machine Learning},
  pages={40770--40803},
  year={2023},
  organization={PMLR}
}

@article{joudaki2023impact,
  title={On the impact of activation and normalization in obtaining isometric embeddings at initialization},
  author={Joudaki, Amir and Daneshmand, Hadi and Bach, Francis},
  journal={Advances in Neural Information Processing Systems},
  volume={36},
  pages={39855--39875},
  year={2023}
}

@article{wu2024demystifying,
  title={Demystifying oversmoothing in attention-based graph neural networks},
  author={Wu, Xinyi and Ajorlou, Amir and Wu, Zihui and Jadbabaie, Ali},
  journal={Advances in Neural Information Processing Systems},
  volume={36},
  year={2024}
}

@article{cowsik2024geometric,
  title={Geometric Dynamics of Signal Propagation Predict Trainability of Transformers},
  author={Cowsik, Aditya and Nebabu, Tamra and Qi, Xiao-Liang and Ganguli, Surya},
  journal={arXiv preprint arXiv:2403.02579},
  year={2024}
}

@article{raginsky2024some,
  title={{Some Remarks on Controllability of the Liouville Equation}},
  author={Raginsky, Maxim},
  journal={arXiv preprint arXiv:2404.14683},
  year={2024}
}

@article{geshkovski2024number,
  title={{On the number of modes of Gaussian kernel density estimators}},
  author={Geshkovski, Borjan and Rigollet, Philippe and Sun, Yihang},
  journal={arXiv preprint arXiv:2412.09080},
  year={2024}
}

@article{alcalde2025attention,
  title={{Attention's forward pass and Frank-Wolfe}},
  author={Alcalde, Albert and Geshkovski, Borjan and Ruiz-Balet, Dom{\`e}nec},
  journal={arXiv preprint arXiv:2508.09628},
  year={2025}
}

@inproceedings{brockett2008control,
  title={{On the control of Liouville equations}},
  author={Brockett, R},
  booktitle={Differential Equation and Topology: Abstracts of International Conference Dedicated to the Centennial Anniversary of Lev Semenovich Pontryagin. Lomonosov Moscow State University. Moscow},
  pages={7},
  year={2008}
}

@article{chen2016optimal,
  title={Optimal transport over a linear dynamical system},
  author={Chen, Yongxin and Georgiou, Tryphon T and Pavon, Michele},
  journal={IEEE Transactions on Automatic Control},
  volume={62},
  number={5},
  pages={2137--2152},
  year={2016},
  publisher={IEEE}
}

@article{agrachev2009controllability,
  title={Controllability on the group of diffeomorphisms},
  author={Agrachev, Andrei and Caponigro, Marco},
  journal={Annales de l'Institut Henri Poincar{\'e} C, Analyse non lin{\'e}aire},
  volume={26},
  number={6},
  pages={2503--2509},
  year={2009}
}

@article{agrachev2009optimal,
  title={Optimal transportation under nonholonomic constraints},
  author={Agrachev, Andrei and Lee, Paul},
  journal={Transactions of the American Mathematical Society},
  volume={361},
  number={11},
  pages={6019--6047},
  year={2009}
}

@article{khesin2009,
author = {Boris Khesin and Paul Lee},
title = {{A nonholonomic Moser theorem and optimal transport}},
volume = {7},
journal = {Journal of Symplectic Geometry},
number = {4},
publisher = {International Press of Boston},
pages = {381 -- 414},
year = {2009},
}

@inproceedings{
furuya2024transformers,
title={Transformers are Universal In-context Learners},
author={Takashi Furuya and Maarten V. de Hoop and Gabriel Peyr{\'e}},
booktitle={The Thirteenth International Conference on Learning Representations},
year={2025},
url={https://openreview.net/forum?id=6S4WQD1LZR}
}

@book{shub2013global,
  title={Global stability of dynamical systems},
  author={Shub, Michael},
  year={2013},
  publisher={Springer Science \& Business Media}
}

@article{duprez2019approximate,
  title={Approximate and exact controllability of the continuity equation with a localized vector field},
  author={Duprez, Michel and Morancey, Morgan and Rossi, Francesco},
  journal={SIAM Journal on Control and Optimization},
  volume={57},
  number={2},
  pages={1284--1311},
  year={2019},
  publisher={SIAM}
}

@article{garg2022can,
  title={What can transformers learn in-context? a case study of simple function classes},
  author={Garg, Shivam and Tsipras, Dimitris and Liang, Percy S and Valiant, Gregory},
  journal={Advances in Neural Information Processing Systems},
  volume={35},
  pages={30583--30598},
  year={2022}
}

@inproceedings{chiang2023tighter,
  title={Tighter bounds on the expressivity of transformer encoders},
  author={Chiang, David and Cholak, Peter and Pillay, Anand},
  booktitle={International Conference on Machine Learning},
  pages={5544--5562},
  year={2023},
  organization={PMLR}
}

@article{jiang2023approximation,
  title={Approximation theory of transformer networks for sequence modeling},
  author={Jiang, Haotian and Li, Qianxiao},
  journal={arXiv preprint arXiv:2305.18475},
  year={2023}
}

@article{bolley2005weighted,
  title={{Weighted Csisz{\'a}r-Kullback-Pinsker inequalities and applications to transportation inequalities}},
  author={Bolley, Fran{\c{c}}ois and Villani, C{\'e}dric},
  journal={Annales de la Facult{\'e} des sciences de Toulouse: Math{\'e}matiques},
  volume={14},
  number={3},
  pages={331--352},
  year={2005}
}

@inproceedings{
bruno2024emergence,
title={Emergence of meta-stable clustering in mean-field transformer models},
author={Giuseppe Bruno and Federico Pasqualotto and Andrea Agazzi},
booktitle={The Thirteenth International Conference on Learning Representations},
year={2025},
url={https://openreview.net/forum?id=eBS3dQQ8GV}
}

@article{wang2024understanding,
  title={Understanding the Expressive Power and Mechanisms of Transformer for Sequence Modeling},
  author={Wang, Mingze and Weinan, E},
  journal={arXiv preprint arXiv:2402.00522},
  year={2024}
}

@article{jiang2023brief,
  title={A brief survey on the approximation theory for sequence modelling},
  author={Jiang, Haotian and Li, Qianxiao and Li, Zhong and Wang, Shida},
  journal={arXiv preprint arXiv:2302.13752},
  year={2023}
}

@article{petrov2024prompting,
  title={Prompting a pretrained transformer can be a universal approximator},
  author={Petrov, Aleksandar and Torr, Philip HS and Bibi, Adel},
  journal={arXiv preprint arXiv:2402.14753},
  year={2024}
}

@article{sander2024towards,
  title={Towards Understanding the Universality of Transformers for Next-Token Prediction},
  author={Sander, Michael E and Peyr{\'e}, Gabriel},
  journal={arXiv preprint arXiv:2410.03011},
  year={2024}
}

\begin{minipage}[t]{.5\textwidth}
{\footnotesize{\bf Borjan Geshkovski}\par
  Inria \&
  Laboratoire Jacques-Louis Lions\par
  Sorbonne Université\par
  4 Place Jussieu\par
  75005 Paris, France\par
 \par
  e-mail: \href{mailto:borjan.geshkovski@inria.fr}{\textcolor{blue}{\scriptsize borjan.geshkovski@inria.fr}}
  }
\end{minipage}
\begin{minipage}[t]{.5\textwidth}
  {\footnotesize{\bf Philippe Rigollet}\par
  Department of Mathematics\par
  Massachusetts Institute of Technology\par
  77 Massachusetts Ave\par
  Cambridge 02139 MA, United States\par
 \par
  e-mail: \href{mailto:blank}{\textcolor{blue}{\scriptsize rigollet@math.mit.edu}}
  }
\end{minipage}%

\begin{center}
\begin{minipage}[t]{.5\textwidth}
  {\footnotesize{\bf Domènec Ruiz-Balet}\par
  Departament de Matemàtiques\par
  Facultat de Matemàtiques i Informàtica\par
  Universitat de Barcelona\par
  585 Gran Via de les Corts Catalanes\par
  08007 Barcelona, Spain\par
 \par
  e-mail: \href{mailto:blank}{\textcolor{blue}{\scriptsize domenec.ruizibalet@ub.edu}}
  }
\end{minipage}%
\end{center}

\end{document}